\theoremstyle{plain}
\newtheorem{proposition}{Proposition}[section]
\newtheorem{theorem}{Theorem}[section]
\newtheorem{lemma}{Lemma}[section]
\newtheorem{problem}{Problem}[section]
\theoremstyle{remark}
\newtheorem{remark}{Remark}[section]
\theoremstyle{definition}
\newtheorem{definition}{Definition}[section]
\title{Branched $\alpha$-combinatorial Ricci flows on closed surfaces with Euler characteristic $\chi\le 0$}
\author{Wenjun Li, Rongyuan Liu, Guohao Chen, Aijin Lin}
\date{}
\begin{document}
\maketitle
\begin{abstract}
In this paper we introduce the branched $\alpha$-flows on closed surfaces with Euler characteristic \(\chi \leq 0\). Based on the strict convexity of  the branched $\alpha$-potentials, we establish the long time existence and convergence of the solutions to the branched $\alpha$-flows, which generalizes Ge and Xu's main results \cite{2015,2015A} on the $\alpha$-flows. In addtion, we study the prescribed curvature problems under the relaxed precondition $\chi(M)\in \mathbb{Z}$ via alternative $\alpha$-flows, establishing admissibility conditions for prescribed curvatures and their exponential convergence to target metrics.

 \par\quad
    \newline \textbf{Keywords}: $\alpha$-curvatures; Branched $\alpha$-flows; Branched $\alpha$-potentials
    \newline \textbf{Mathematics Subject Classification} (2020): 52C26, 53C15, 05E45
\end{abstract}

\vspace{12pt}
\let\thefootnote\relax\footnotetext {}

\section{Introduction}
\hspace{14pt}
After Thurston \cite{1980The} proposed using circle packings to approximate conformal mappings, Beardon and Stephenson \cite{1990The} suggested that circle packings could be regarded as discrete analogues of analytic functions, and similar conclusions could be derived from classical results in complex analysis. The branched circle packings were introduced by Bowers and Stephenson \cite{bowers}, who used it to prove the branched Koebe-Andreev-Thurston theorem. Subsequently, Dubejko \cite{1995Branched} continued the study of branched circle packings and established necessary and sufficient conditions for the existence of branched circle packings on surfaces.

On the other hand, Chow and Luo \cite{2002Combinatorial} introduced the combinatorial Ricci flows analogous to Hamilton's Ricci flow \cite{Hamilton1982Three} to deform circle packings, and provided an alternative proof of the Koebe-Andreev-Thurston theorem. Subsequently, Lan and Dai \cite{2007Variational} combined the branch structures with Chow-Luo's combinatorial Ricci flows and introduced the branched combinatorial Ricci flows. They successfully extended Chow-Luo's results on the combinatorial Ricci flows. Gao-Lin \cite{gl1, gl2} introduced branched combinatorial Calabi flows and generalized Lan-Dai's work on branched combinatorial Ricci flows from $p=2$ to any $p>1$. Ge and Xu \cite{2015,2015A} introduced $\alpha$-curvatures and $\alpha$-flows and studied the combinatorial Yamabe problem on triangulated surfaces.

 In this paper, we shall combine $\alpha$-flows on closed surfaces with branch structures, and introduce branched $\alpha$-flows as follows (see Section \ref{sec 3} for details). Given a weighted triangulated closed surface $(M,\mathcal{T},\Phi,H)$ with a circle packing metric $r$ and a non-negative real number $\alpha$, a branched $\alpha$-flow in Euclidean background geometry $\mathbb{E}^2$ is defined to be
\begin{equation}\label{flow E}
\frac{\mathrm{d}u_i^{\mathbb{E}}}{\mathrm{d}t} = s_{\alpha}^{\mathbb{E}}r_{i}^{\alpha}-(K_i+2\pi\beta_i), \forall v_i\in V.
\end{equation}
When $\chi(M)\le -1$, a branched $\alpha$-flow in hyperbolic background geometry $\mathbb{H}^2$ is defined to be
\begin{equation}\label{flow H}
\frac{\mathrm{d}u_i^{\mathbb{H}}}{\mathrm{d}t} = s_{\alpha}^{\mathbb{H}}\tanh^{\alpha}\frac{r_i}{2}-(K_i+2\pi\beta_i), \forall v_i\in V.
\end{equation}
Our main result is to establish the long time existence and convergence of the solution $u(t)$ to the branched $\alpha$-flows \eqref{flow E} \eqref{flow H} as follows.
\begin{theorem}\label{main result}
For any initial metric $r(0)$, the solution to \eqref{flow E} (\eqref{flow H} resp.) exists for all time $t\in \mathbb{R}$ and converges exponentially fast to the constant $\alpha$-metric $r_H^{\mathbb{E}}$ ($r_H^{\mathbb{H}}$ resp.) in $\mathbb{E}^2$ ($\mathbb{H}^2$ resp.).
\end{theorem}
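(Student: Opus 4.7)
The plan is to realize both flows \eqref{flow E} and \eqref{flow H} as negative gradient flows of the branched $\alpha$-potentials introduced earlier, and then exploit their strict convexity (asserted in the abstract and, presumably, established in the intervening sections) to obtain long-time existence, convergence, and an exponential rate from a single convex-analytic package.

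First, I would write the flows in gradient form. Let $F^{\mathbb{E}}$ (resp.\ $F^{\mathbb{H}}$) denote the branched $\alpha$-potential in $u$-coordinates. The key identity to verify is
\[
\nabla_u F(u) = \bigl(K_i + 2\pi\beta_i\bigr) - s_{\alpha}\, w_i,
\]
with $w_i = r_i^{\alpha}$ in the Euclidean case and $w_i = \tanh^{\alpha}(r_i/2)$ in the hyperbolic case, the normalization of $s_{\alpha}$ being forced by the branched Gauss--Bonnet identity $\sum_i (K_i+2\pi\beta_i)=2\pi\chi(M)$. Then \eqref{flow E} and \eqref{flow H} read $\dot u=-\nabla F$, so $F(u(t))$ is non-increasing and constant trajectories are critical points of $F$.

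Second, long-time existence. On the maximal interval $[0,T)$ of existence, monotonicity confines $u(t)$ to the sublevel set $\{F\le F(u(0))\}$. Strict convexity of $F$ together with the existence of an interior critical point (equivalently, coercivity of $F$ on the admissible cone) makes these sublevel sets compact subsets of the admissible domain; hence $u(t)$ remains in a compact set and $T=\infty$. This is the step I expect to be the main obstacle: in the branched setting the admissible domain is cut out by combinatorial compatibility conditions depending on the branch divisor, and one must rule out escape to the boundary (for instance $r_i\to 0$ in the $\mathbb{H}^2$ case, or degenerate triangles in the $\mathbb{E}^2$ case) using the coercivity estimates for $F$ developed earlier; without those quantitative estimates strict convexity alone is not enough.

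Third, convergence and the rate. By strict convexity $F$ has a unique minimizer $u^{\ast}$, and $\nabla F(u^{\ast})=0$ translates into $K_i+2\pi\beta_i = s_{\alpha} w_i$ for all $i$, identifying $u^{\ast}$ with the constant $\alpha$-metric $r_H^{\mathbb{E}}$ (resp.\ $r_H^{\mathbb{H}}$). Since the $\omega$-limit set of the descending gradient trajectory in a compact region lies in the critical set $\{u^{\ast}\}$, one obtains $u(t)\to u^{\ast}$. For the exponential rate, linearization at $u^{\ast}$ gives $\dot\eta = -\mathrm{Hess}\,F(u^{\ast})\,\eta + o(|\eta|)$; strict convexity supplies a positive lower bound $\lambda_0$ on the spectrum of $\mathrm{Hess}\,F(u^{\ast})$, and a standard Lyapunov argument yields $|u(t)-u^{\ast}|\le C e^{-\lambda_0 t/2}$. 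Exponential convergence of $r(t)$ to the target metric then follows because the change of variables $r\leftrightarrow u$ is a diffeomorphism.
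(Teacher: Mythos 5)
Your overall architecture coincides with the paper's: the gradient-flow realization is Proposition \ref{prop origin potential} (where the real work is checking that the $1$-forms are closed, via the symmetry of $J^{\mathbb{E}}$ and $J^{\mathbb{H}}$ from Lemmas \ref{Lemma 4} and \ref{Lemma 4.5} --- this is exactly why the paper uses $\tanh^{\alpha}\frac{r_i}{2}$ rather than $\sinh^{\alpha}\frac{r_i}{2}$ in $\mathbb{H}^2$, cf.\ Remark \ref{reason not sinh}); strict convexity is Theorems \ref{thm convex E} and \ref{thm convex H}; properness and the unique critical point are Theorems \ref{thm critical point E} and \ref{thm critical point H}; and the exponential rate comes, as you propose, from linearizing at the critical point and invoking the Hurwitz property plus Lyapunov stability (Theorem \ref{main thm}). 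The one genuinely different route is long-time existence: the paper proves it \emph{before} any potential theory (Proposition \ref{long time existance}) by direct ODE estimates --- uniform two-sided bounds on $K_i+2\pi\beta_i - s_{\alpha}w_i$ give $c_1e^{a_1t}<r_i(t)<c_2e^{a_2t}$ in $\mathbb{E}^2$, while in $\mathbb{H}^2$ Lemma \ref{Lemma eps} shows that once $r_i$ is large the angles are small, $K_i>\pi$, and the flow strictly decreases $r_i$, barring blow-up. Your sublevel-set argument is workable once properness is established, and you rightly flag that strict convexity alone does not suffice; but note that in $\mathbb{E}^2$ with $\Phi\in[0,\pi/2]$ the triangle inequalities always hold (Thurston), so the ``degenerate triangle'' boundary you worry about does not exist, and the paper's direct estimates deliver existence without waiting for coercivity.

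Two concrete gaps. First, your normalization identity $\sum_i(K_i+2\pi\beta_i)=2\pi\chi(M)$ is false in the presence of branch points: combinatorial Gauss--Bonnet gives $\sum_i K_i = 2\pi\chi(M)$ in $\mathbb{E}^2$, hence $\sum_i(K_i+2\pi\beta_i)=2\pi\chi(M)+2\pi\sum_i\beta_i$. The paper never derives $s_\alpha$ from such an identity --- it simply defines $s_{\alpha}^{\mathbb{E}}=2\pi\chi(M)/\lVert r\rVert_\alpha^\alpha$ --- and consequently $\sum_i \dot u_i^{\mathbb{E}}$ need not vanish along the flow, which is precisely why the proof of Theorem \ref{main thm} invokes the scaling equivalence and replaces $u^{\mathbb{E}}$ by a normalized representative before running the Lyapunov argument. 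Second, you treat the two geometries uniformly, but in $\mathbb{E}^2$ the potential is degenerate along $(1,\ldots,1)$: $\mathrm{Hess}_{u^{\mathbb{E}}}F^{\mathbb{E}}$ is only positive semi-definite of rank $N-1$ (Theorem \ref{thm convex E} together with Lemma \ref{Lemma 1}), so your claims of a unique minimizer on the whole space and a positive lower bound $\lambda_0$ on the spectrum of $\mathrm{Hess}\,F(u^{*})$ fail on $\mathbb{R}^N$; they hold only after restriction to the hyperplane $U=\{\sum_i u_i=0\}$, and uniqueness of $r_H^{\mathbb{E}}$ is correspondingly only up to scaling (Remark \ref{r_H}). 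Relatedly, in $\mathbb{H}^2$ you name $r_i\to 0$ as the boundary risk, but $r_i\to 0$ means $u_i^{\mathbb{H}}\to-\infty$, which is exactly the regime controlled by coercivity; the dangerous boundary is $u_i^{\mathbb{H}}\to 0^-$ (i.e.\ $r_i\to\infty$), which sits at \emph{finite} $u$-norm, so properness at $\lVert u\rVert\to\infty$ alone does not confine the trajectory. The paper's Theorem \ref{thm critical point H} closes this with a separate monotonicity argument (each $f_l$ is strictly increasing on an interval $(\delta_l,0)$, since $K_l>0$ for $r_l$ large), forcing the critical point and the descending trajectory into the interior; your plan would need this step added.
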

Theorem \ref{main result} generalizes Ge and Xu's main results \cite{2015,2015A} on the $\alpha$-flows. In addition, we introduced the branched $\alpha$-potential and establish its strict convexity, which may be of some independent interest. Finally, we study the prescribed curvature problems under the relaxed precondition $\chi(M)\in \mathbb{Z}$ via alternative $\alpha$-flows, establishing admissibility conditions for prescribed curvatures and their exponential convergence to target metrics.\par
This paper is organized as follows. In Section \ref{sec 2}, we give some preliminaries. In Section $\ref{sec 3}$, we introduce the branched $\alpha$-flows and prove the long time existence of the solution to \eqref{flow E} (\eqref{flow H} resp.). In Section \ref{section 4} , we introduce the branched $\alpha$-potential and establish its strict convexity. In Section \ref{section 5}, we establish the exponential convergence of the solution to \eqref{flow E} (\eqref{flow H} resp.). In Section \ref{sec 6}, under the relaxed precondition $\chi(M)\in \mathbb{Z}$, two additional branched $\alpha$-flows are introduced to address the prescribed curvature problems. In Section \ref{sec 7}, we present several unsolved problems.

\section{Preliminaries}\label{sec 2}
\hspace{14pt}
Suppose $M$ is a closed connected surface equipped with a triangulation $\mathcal{T}=\{V, E, F\}$, where $V$, $E$, and $F$ denote the sets of vertices, edges, and faces, respectively. A \emph{weight} on the triangulation is defined to be a
function $\Phi : E\rightarrow[0, \pi/2]$ that assigns a weight $\Phi_{ij}$ to each edge $e_{ij}$. The triple $(M, \mathcal{T}, \Phi)$ is called a weighted triangulation of $M$. Denote by $N = |V|$ the number of vertices. We write $v_i \sim v_j$ to indicate that vertices $v_i$ and $v_j$ are adjacent, where $1 \leq i,j \leq N $. Throughout this paper, any function $f: V \to \mathbb{R}$ is treated as a column vector $f = (f_1,\ldots,f_N)^T$ in $\mathbb{R}^N$, where $f_i$ represents the value of $f$ at vertex $v_i$. Additionally, we use $C(V)$ to denote the set of all functions defined on $V$.

Let $P$ be a collection of circles on the surface $M$. Then we say that $P$ is a
\emph{circle packing} (see \cite{bowers, gl2} for details) realizing a weighted triangulation $(\mathcal{T}, \Phi)$ if
\begin{itemize}
    \item {there exists a $1-$to$-1$ correspondence between the vertices $v \in V$ and the
    circles $C_{p}(v)$ of $P$ such that
    circles $C_{p}(u)$ and $C_{p}(\omega)$ intersect at an overlap angle $\Phi(e_{u\omega})$,
    where $e_{u\omega}$ is the edge joining $u$ and $\omega$.}
    \item {$P$ is orientation preserving, i.e. if $v_1$, $v_2$, $v_3$ are the vertices of a
    face in $\mathcal{T}$ taken in the positive order
    (with respect to the orientation of $\mathcal{T}$) then $C_p(v_1)$, $C_p(v_2)$, $C_p(v_3)$
    form a positively oriented triple of circles in the surface.}
\end{itemize}
In order to better understand the branched circle packings, we introduce a radius function $r = (r_1,\ldots,r_N)^T: V\to(0,+\infty)$ called a \emph{circle packing metric}. Let $\theta_{i}^{ijk}$ denote the inner cone angle of the triangle $\Delta_{ijk}$ at $v_i$. The \emph{discrete Gauss curvature} at the vertex $v_i$ is defined as
\begin{equation}\label{def K}
    K_{i}=2\pi-\sum_{f_{ijk}\in F}\theta_{i}^{ijk},
\end{equation}
where the sum is taken over all the triangles having \(v_i\) as one of their vertices. A vertex $v_i$ is called a \emph{branch point} of $P$ with order
$n - 1$, if its angle sum $\theta_i=\sum_{f_{ijk}\in F}\theta_{i}^{ijk}$ equals to $2n\pi$, $n\geq2$.\par

Given $(M,\mathcal{T},\Phi)$, we can assign each edge $e_{ij}$ a length as
$$l_{ij} =\sqrt{r_{i}^{2}+r_{j}^{2}+2r_{i}r_{j}\cos(\Phi_{ij})},$$
in Euclidean background geometry and
$$l_{ij} =\cosh^{-1}(\cosh r_{i}\cosh r_{j}+\sinh r_{i}\sinh r_{j}\cos(\Phi_{ij})),$$ in hyperbolic background geometry.
As a consequence, each face $\Delta_{ijk}\in F$ is isometric to a Euclidean triangle (hyperbolic resp.). Specifically,
each face $\Delta_{ijk}\in F$ is a Euclidean triangle (hyperbolic resp.) with edge lengths $l_{ij}, l_{jk}, l_{ik}$
because $l_{ij}, l_{jk}, l_{ik}$ satisfy triangle inequalities (Lemma 13.7.2 in \cite{1980The}). Furthermore,
the triangulated surface $(X, \mathcal{T})$ is composed by gluing Euclidean triangles coherently (hyperbolic resp.).

\begin{definition}
Given a weighted triangulated surface $(M,\mathcal{T},\Phi)$, if branched points $\{b_j\in V\mid j = 1,\ldots,m\}$ satisfy
\[
\Theta(b_j)=(1 + \beta_j)2\pi,
\]
and for $v_i\in V\setminus\{b_1,\ldots,b_m\}$, the condition
\[
\Theta(v_i)=2\pi
\]
holds, then we call $b_j$ a branch point of order $\beta_j\in\mathbb{Z}_{> 0}$ and denote $\beta=(\beta_1,\beta_2,\ldots,\beta_N)^T$. If $b_1,\ldots,b_m$ are all the branch points of the circle packing $P$ with orders $\beta_1,\ldots,\beta_m$ respectively, then
\[
br(P)=\{(b_1,\beta_1),(b_2,\beta_2),\ldots,(b_m,\beta_m)\}
\]
is called as the branch set of $P$, and $P$ is referred to as a branched circle packing with branch set $br(P)$ that realizes $(M,\mathcal{T},\Phi)$.
\end{definition}
\begin{remark}
    If $v_i$ is not a branched point, we assign an order $\beta_i$ to it and set $\beta_i = 0$.
\end{remark}
\begin{definition}\label{def branch}
A branch set $br(P)=\{(b_1,\beta_1),(b_2,\beta_2),\ldots,(b_m,\beta_m)\}\subset V\times m$ of a circle packing $P$ is called a branch structure if each simple closed path $\varGamma = \{e_1,e_2,\ldots,e_n\}$ in $\mathcal{T}$, satisfies \[
\sum_{j = 1}^{n}[\pi - \Phi(e_j)]>2(l(\varGamma)+1)\pi.
\]
Here $l(\varGamma)=\sum_{i = 1}^{m}\delta_i(\varGamma)\beta_i$ and $\delta_i = 1$ if $v_i$ is enclosed by $\varGamma$  or else $\delta_i = 0$.
\end{definition}

Dubejko's work\cite{1995Branched} showed that there exists a branched circle packing $P$ realizing the weighted triangulation $(M, \mathcal{T}, \Phi)$ if and only if $br(P)$ is a branch structure for $(M, \mathcal{T}, \Phi)$. In the following, we use the four-tuple $(M, \mathcal{T},\Phi,P)$ to represent $(M, \mathcal{T}, \Phi)$ with a branch structure $br(P)$, and we say it as a branched weighted triangulated surface.

Chow and Luo \cite{2002Combinatorial} introduced the
combinatorial Ricci flows on surfaces and proved that the solutions to the flow equations exist for
all time and converge exponentially fast to the Thurston's circle packings \cite{1980The}. Since then, various discrete curvature flows were introduced and studied. Specially, motivated by Chow-Luo's pioneering work, Ge and Xu \cite{2015} introduced a kind of new discrete curvatures, namely $\alpha$-curvature $$R_{\alpha,i} = \frac{K_i}{r_i^{\alpha}},$$
which can be used to approximate the Gauss
curvature on surfaces. Furthermore, Ge and Xu defined $\alpha$-flow in $\mathbb{E}^2$ as
\begin{equation}\label{ge original flow}
\frac{\mathrm{d}r_i}{\mathrm{d}t}=(R_{\alpha,av}-R_{\alpha,i})r_i,
\end{equation}
where $$R_{\alpha,av} = 2\pi\chi(M)/\|r\|_{\alpha}^{\alpha}, \|r\|_{\alpha}=(\sum_{i = 1}^{n}r_i^{\alpha})^{\frac{1}{\alpha}}.$$
For convenience, we choose the coordinate transformation $u_i=\ln r_i$. The map $u = u(r)$ is a diffemorphism between $\mathbb{R}^N_{>0}$ and $\mathbb{R}^N$. It is noteworthy that it remains unknown whether (\ref{ge original flow}) exists for all time. Ge and Xu introduced another $\alpha$-flow as
\begin{equation}\label{ge-xu's flow}
\frac{\mathrm{d}u_i}{\mathrm{d}t} = R_{\alpha,av}r_i^{\alpha}-K_i,
\end{equation}
and further proved the results similar to Chow-Luo's combinatorial Ricci flows.

\section{Branched $\alpha$-Flows}\label{sec 3}
\hspace{14pt}
In Ge and Xu's work \cite{2015}, $\alpha$ is considered an arbitrary real number. However, this choice leads to the loss of geometric meaning when $\alpha < 0$. To address this, we restrict $\alpha$ to non-negative real numbers throughout this paper.
In this section, we introduce two types of branched $\alpha$-curvatures and their corresponding $\alpha$-flows in $\mathbb{E}^2$ and $\mathbb{H}^2$.

Chow and Luo \cite{2002Combinatorial} considered the circle packing metric $r_i$ as the discrete analogue of the smooth Riemannian metric tensor $g$, and the discrete Gauss curvature $K_i$ as that of the smooth Riemannian curvature. Ge and Xu \cite{2015} regarded $r_i^2$ as a suitable analogue of the Riemannian metric in $\mathbb{E}^2$, and $K_i/r_i^2$ as a suitable analogue of the smooth Gaussian curvature. Furthermore, they generalized these to the $\alpha$-th power. Inspired by Ge-Xu's work, we introduced the branched $\alpha$-curvature for circle packing metrics.
\begin{definition}
Given $(M,\mathcal{T},\Phi,H)$ with a circle packing metric $r\colon V\to(0,+\infty)$ and a non-negative real number $\alpha$, a branched $\alpha$-curvature at a vertex $v_i$ in $\mathbb{E}^2$ is defined to be
\begin{equation}\label{curvature E}
    B_{\alpha,i}^{\mathbb{E}}(r_i) = \frac{K_i+2\pi\beta_i}{r_i^\alpha}.
\end{equation}
When the Euler characteristic of $M$ is negative, a branched $\alpha$-curvature at a vertex $v_i$ in $\mathbb{H}^2$ is defined to be
\begin{equation}\label{curvature H}
    B_{\alpha,i}^{\mathbb{H}}(r_i) = \frac{K_i+2\pi\beta_i}{\tanh^\alpha \frac{r_i}{2}}.
\end{equation}
\end{definition}
\begin{remark}
     In the case $\alpha=2$, $A_i^{\mathbb{E}} = \pi r_i^2$ is the area of the disk at $v_i$ in $\mathbb{E}^2$. However, although $A_i^{\mathbb{H}} = 4\pi\sinh^2 \frac{r_i}{2}$ represents the area of a circle with radius $r_i$ in $\mathbb{H}^2$, we do not take $\sinh^{\alpha} \frac{r_i}{2}$ as the denominator in (\ref{curvature H}), which will be explained in the Remark \ref{reason not sinh}. In \cite{2015A}, Ge proved that $r_i^2$ and $\sinh^2 \frac{r_i}{2}$ serve as suitable discrete analogues to the smooth Riemannian metric tensor $g$ in $\mathbb{E}^2$ and $\mathbb{H}^2$, respectively. Here, $\tanh^2 \frac{r_i}{2}$ can also be regarded as a good analogy for $g$, because as the triangulation becomes finer, we have $\tanh \frac{r_i}{2}\sim\sinh \frac{r_i}{2}$ when $r_i\rightarrow0$.
\end{remark}

Ge and Xu \cite{2015} proposed the $\alpha$-flows in $\mathbb{E}^2$. On the other hand, Lan and Dai \cite{2007Variational} introduced the branched combinatorial Ricci flows in both $\mathbb{E}^2$ and $\mathbb{H}^2$. Inspired by their work, we introduced branched $\alpha$-flows as follows.
\begin{definition}
Given $(M,\mathcal{T},\Phi,H)$ with a circle packing metric $r$ and a non-negative real number $\alpha$, a branched $\alpha$-flow in $\mathbb{E}^2$ is defined to be
\begin{equation}\label{main flow E}
\frac{\mathrm{d}u_i^{\mathbb{E}}}{\mathrm{d}t} = s_{\alpha}^{\mathbb{E}}r_{i}^{\alpha}-(K_i+2\pi\beta_i), \forall v_i\in V,
\end{equation}
where $u_i^{\mathbb{E}} = \ln r_i$, and $s_{\alpha}^{\mathbb{E}}=2\pi\chi(M)/\lVert r\rVert_{\alpha}^{\alpha}$.

When $\chi(M)\le -1$, a branched $\alpha$-flow in $\mathbb{H}^2$ is defined to be
\begin{equation}\label{main flow H}
\frac{\mathrm{d}u_i^{\mathbb{H}}}{\mathrm{d}t} = s_{\alpha}^{\mathbb{H}}\tanh^{\alpha}\frac{r_i}{2}-(K_i+2\pi\beta_i), \forall v_i\in V,
\end{equation}
where $u_i^{\mathbb{H}} = \ln \operatorname{tanh}(r_i/2)$, and $s_{\alpha}^{\mathbb{H}}=2\pi\chi(M)/\lVert \tanh\frac{r}{2}\rVert_{\alpha}^{\alpha}$.
\end{definition}
\begin{remark}
    Lan and Dai’s branched combinatorial Ricci flow is precisely the \eqref{main flow E} with $\alpha = 0$. Additionally, Ge and Xu’s $\alpha$-flow
    \eqref{ge-xu's flow} in $\mathbb{E}^2$  is precisely the \eqref{main flow E} with $\beta = 0$. Hence, the branched $\alpha$-flow \eqref{main flow E} is a common generalization of Lan and Dai’s branched combinatorial Ricci flow and Ge and Xu's $\alpha$-flow.
\end{remark}

\begin{definition}
A circle packing metric \( r_H^{\mathbb{E}}\) is called a constant branched \(\alpha\)-curvature circle packing metric(constant branched \(\alpha\)-metric for short) in $\mathbb{E}^2$ if for any vertex $v_i\in V$ it satisfies:
\begin{equation}
B_{\alpha,i}^{\mathbb{E}}(r_H^{\mathbb{E}})= s_{\alpha}^{\mathbb{E}},
\end{equation}
which is equivalent to the explicit form:
\[
s_{\alpha}^{\mathbb{E}}{r_H^{\mathbb{E}}}^\alpha= K_i+2\pi\beta_i.
\]
Similarly, a circle packing metric \(r_H^{\mathbb{H}}\) is called a constant branched \(\alpha\)-curvature circle packing metric(constant branched \(\alpha\)-metric for short) in $\mathbb{H}^2$ if any vertex $v_i\in V$ it satisfies:
\begin{equation}
B_{\alpha,i}^{\mathbb{H}}(r_H^{\mathbb{H}})= s_{\alpha}^{\mathbb{H}},
\end{equation}
with the equivalent explicit form:
\[
s_{\alpha}^{\mathbb{H}}\tanh^\alpha\frac{r_H^{\mathbb{H}}}{2}= K_i+2\pi\beta_i.
\]
\end{definition}

\begin{remark}
It is straightforward to see that if \(\eqref{main flow E}\) and \(\eqref{main flow H}\) converge, then the constant branched $\alpha$-metrics exist in \(\mathbb{E}^2\) and \(\mathbb{H}^2\) respectively. Regardless of the property of convergence or divergence of the branched \(\alpha\)-flows, we will directly prove the existence and uniqueness of constant branched $\alpha$-metrics \(r_H^{\mathbb{E}}\) in \(U = \{u \in \mathbb{R}^N \mid \sum_i u_i = 0\} \subset \mathbb{E}^2\) and \(r_H^{\mathbb{H}}\) in \(\mathbb{H}^2\) in Theorems \(\ref{thm critical point E}\) and \(\ref{thm critical point H}\), relying solely on topological-combinatorial conditions.
\end{remark}
\begin{lemma}[\cite{2002Combinatorial}, Lemma 3.5]\label{Lemma eps}
    For any \(\epsilon>0\), there exists a number \(l\) such that when \(r_i > l\), the inner angle \(\theta_i^{jk}\) in the triangle \(\triangle v_iv_jv_k\) is smaller than \(\epsilon\) in $\mathbb{H}^2$.
\end{lemma}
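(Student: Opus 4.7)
The plan is to apply the hyperbolic law of cosines at $v_i$ and show that the cosine of the angle tends to $1$. Writing
\[
\cos\theta_i^{jk}\;=\;\coth l_{ij}\,\coth l_{ik}\;-\;\frac{\cosh l_{jk}}{\sinh l_{ij}\,\sinh l_{ik}},
\]
I need the first term to approach $1$ and the second to approach $0$ as $r_i\to\infty$.

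The key geometric input is that $\Phi_{ij}\in[0,\pi/2]$ implies $\cos\Phi_{ij}\ge 0$, so the edge-length formula
\[
\cosh l_{ij}\;=\;\cosh r_i\cosh r_j+\sinh r_i\sinh r_j\cos\Phi_{ij}
\]
forces $\cosh l_{ij}\ge\cosh r_i\cosh r_j\ge\cosh r_i$, and in particular $l_{ij}\ge r_i$; the same lower bound holds for $l_{ik}$. Since $\coth$ converges exponentially to $1$ on $[0,\infty)$, this immediately gives $\coth l_{ij}\coth l_{ik}\to 1$ as $r_i\to\infty$, uniformly in the remaining two radii.

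Next I would estimate the second term. The numerator obeys $\cosh l_{jk}\le\cosh(r_j+r_k)\le 2\cosh r_j\cosh r_k$, while the denominator admits a lower bound $\sinh l_{ij}\sinh l_{ik}\ge C\cosh^{2} r_i\cosh r_j\cosh r_k$ for some absolute constant $C>0$ once $r_i$ is sufficiently large; this comes from $\sinh^{2}l=\cosh^{2}l-1$ together with the estimate $\cosh l_{ij}\ge\cosh r_i\cosh r_j$, after observing that for $\cosh r_i\cosh r_j\ge 2$ one has $\cosh^{2} l_{ij}-1\ge \tfrac{3}{4}\cosh^{2}l_{ij}$. The factors $\cosh r_j\cosh r_k$ then cancel between numerator and denominator, leaving an upper bound of order $\cosh^{-2}r_i$, which tends to $0$.

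Combining these two estimates yields $\cos\theta_i^{jk}\to 1$, so $\theta_i^{jk}<\epsilon$ whenever $r_i$ exceeds a threshold $l=l(\epsilon)$. The main technical point is arranging that the $\cosh r_j,\cosh r_k$ factors appearing in the upper bound on $\cosh l_{jk}$ are absorbed by matching factors in the lower bound on $\sinh l_{ij}\sinh l_{ik}$, so that the final estimate depends only on $r_i$. This absorption is precisely what distinguishes the hyperbolic case from the Euclidean one, where no comparable decay mechanism forces the angle at a large-radius vertex to vanish.
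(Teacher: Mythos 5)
Your argument is correct, and every step checks out: the hyperbolic law of cosines gives $\cos\theta_i^{jk}=\coth l_{ij}\coth l_{ik}-\cosh l_{jk}/(\sinh l_{ij}\sinh l_{ik})$; the hypothesis $\Phi\in[0,\pi/2]$ yields $\cosh l_{ij}\ge\cosh r_i\cosh r_j$ (hence $l_{ij}\ge r_i$ and $1\le\coth l_{ij}\coth l_{ik}\le\coth^2 r_i$), while $\cosh l_{jk}\le\cosh(r_j+r_k)\le 2\cosh r_j\cosh r_k$ and, once $\cosh r_i\ge 2$, $\sinh l_{ij}\sinh l_{ik}\ge\tfrac34\cosh^2 r_i\cosh r_j\cosh r_k$, so $\cos\theta_i^{jk}\ge 1-\tfrac83\operatorname{sech}^2 r_i$ uniformly in $r_j,r_k$ --- and the uniformity, which you isolate via the absorption of the $\cosh r_j\cosh r_k$ factors, is exactly what the lemma requires. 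Note, however, that the paper does not prove this statement at all: it is quoted verbatim from Chow--Luo \cite{2002Combinatorial}, Lemma 3.5, whose argument is of a different, softer nature. There one observes that, because $l_{ij},l_{ik}\ge r_i$ (and the overlap angles are nonobtuse), the triangle contains the circular sector at $v_i$ of radius $r_i$ and angle $\theta_i^{jk}$, whose area is $\theta_i^{jk}(\cosh r_i-1)$, while Gauss--Bonnet bounds the area of any hyperbolic triangle by $\pi$; hence $\theta_i^{jk}<\pi/(\cosh r_i-1)$. Both routes give the same exponential decay $\theta_i^{jk}=O(e^{-r_i})$: the area argument is shorter and more conceptual but rests on the geometric containment of the sector, whereas your computation is entirely self-contained from the edge-length formula, produces explicit constants, and makes transparent (as you remark at the end) why the mechanism is genuinely hyperbolic and has no Euclidean analogue. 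A cosmetic simplification: since $\coth\ge 1$, the lower bound on $\cos\theta_i^{jk}$ needs only your estimate on the second term; the convergence $\coth l_{ij}\coth l_{ik}\to 1$ is not actually used.
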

By Lemma \ref{Lemma eps} we have
\begin{proposition}[Long-Time Existence]\label{long time existance}
The solutions to \eqref{main flow E} and \eqref{main flow H} exist for all time $t\in \mathbb{R}$.
\end{proposition}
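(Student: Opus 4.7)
The right-hand sides of \eqref{main flow E} and \eqref{main flow H} are real-analytic vector fields on the open phase spaces $\mathbb{R}^N$ (Euclidean, via $u_i=\ln r_i$) and $(-\infty,0)^N$ (hyperbolic, via $u_i=\ln\tanh(r_i/2)$), so Picard--Lindel\"of produces a unique maximal solution on some interval $(T_-,T_+)\ni 0$. The plan is to upgrade this to $(T_-,T_+)=\mathbb{R}$ via a uniform a priori bound on the right-hand sides, supplemented by a boundary-repulsion argument in the hyperbolic case.

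For the a priori estimate, note first that every inner triangle angle lies in $(0,\pi)$ in both background geometries, so $|K_i|\le\pi d_i$ with $d_i$ the combinatorial degree of $v_i$; in particular $|K_i+2\pi\beta_i|$ is bounded by a constant $C_1$ depending only on $(M,\mathcal T,\Phi,H)$. For the scaling terms the ratios
\[
\frac{r_i^\alpha}{\|r\|_\alpha^\alpha},\qquad \frac{\tanh^\alpha(r_i/2)}{\|\tanh(r/2)\|_\alpha^\alpha}
\]
both lie in $(0,1]$, which yields $|s_\alpha^{\mathbb{E}}r_i^\alpha|,\,|s_\alpha^{\mathbb{H}}\tanh^\alpha(r_i/2)|\le 2\pi|\chi(M)|$. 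Combining these produces a constant $C$, independent of $u$, with $|du_i^{\mathbb{E}}/dt|,\,|du_i^{\mathbb{H}}/dt|\le C$, so by the mean value theorem $u$ itself cannot blow up in finite time in either direction.

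For \eqref{main flow E} the phase space is already the whole of $\mathbb{R}^N$, so the uniform bound is enough and $T_\pm^{\mathbb{E}}=\pm\infty$ follows immediately. For \eqref{main flow H} one must additionally prevent $u_i^{\mathbb{H}}(t)\to 0^-$, i.e.\ $r_i\to\infty$, in finite time, and this is exactly where Lemma~\ref{Lemma eps} enters: when $r_i$ is large every inner angle $\theta_i^{jk}$ at $v_i$ is small, hence $K_i\to 2\pi$. Combined with $s_\alpha^{\mathbb{H}}\le 2\pi\chi(M)/N<0$ (using $\|\tanh(r/2)\|_\alpha^\alpha\le N$ and $\chi(M)\le-1$), this forces
\[
\frac{du_i^{\mathbb{H}}}{dt}\;\longrightarrow\;s_\alpha^{\mathbb{H}}-2\pi(1+\beta_i)<0\quad\text{as}\quad u_i^{\mathbb{H}}\to 0^-,
\]
so there exist $\eta,\delta>0$ with $du_i^{\mathbb{H}}/dt\le-\delta$ whenever $u_i^{\mathbb{H}}\ge -\eta$. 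A barrier/maximum-principle comparison then forbids the trajectory from ever reaching the boundary $\{u_i=0\}$, giving $T_\pm^{\mathbb{H}}=\pm\infty$. The main obstacle in the program is precisely this last step: promoting the pointwise asymptotic coming from Lemma~\ref{Lemma eps} into a quantitative, trajectory-uniform barrier that works in both time directions, which is what makes the hyperbolic case genuinely more delicate than the Euclidean one.
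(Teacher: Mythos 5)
Your route is essentially the paper's: the same a priori bounds ($|K_i+2\pi\beta_i|$ controlled combinatorially, $|s_\alpha^{\mathbb{E}}r_i^\alpha|,\,|s_\alpha^{\mathbb{H}}\tanh^\alpha(r_i/2)|\le 2\pi|\chi(M)|$), the Euclidean case closed by the resulting uniform bound on $\dot u^{\mathbb{E}}$ (the paper phrases this in $r$-coordinates as $c_1e^{a_1t}<r_i(t)<c_2e^{a_2t}$, which is the same estimate), and the hyperbolic case handled by the repulsion near $r_i=\infty$ coming from Lemma~\ref{Lemma eps}. One correction to your self-assessment: the step you flag as the ``main obstacle'' --- upgrading the pointwise asymptotic to a quantitative, trajectory-uniform barrier --- is not delicate in the forward direction, and it is exactly what the paper does. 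Apply Lemma~\ref{Lemma eps} with $\epsilon=\pi/d_i$ to obtain an $l$ \emph{independent of the trajectory} such that $r_i>l$ forces $K_i>\pi$; since $s_\alpha^{\mathbb{H}}\tanh^\alpha(r_i/2)\le 0$ (only the sign is needed, not your sharper bound $2\pi\chi(M)/N$; note also that $s_\alpha^{\mathbb{H}}$ varies along the flow, so your displayed ``limit'' is heuristic, but the inequality is uniform), one gets $\mathrm{d}u_i^{\mathbb{H}}/\mathrm{d}t\le-\pi$ whenever $r_i>l$, i.e.\ your $\eta,\delta$ exist with $\delta=\pi$. The paper's first-crossing argument ($p=\inf\{\cdots\}$ with $r_i(p)=l$ and $r_i'<0$ on the interval where $r_i>l$) is precisely your ``barrier/maximum-principle comparison'' made explicit, and it completes the forward direction.

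The genuine gap is your claim that this barrier yields $T_\pm^{\mathbb{H}}=\pm\infty$: backward in time the inequality has the \emph{wrong sign}. Whenever $u_i^{\mathbb{H}}\ge-\eta$, the time-reversed flow satisfies $\mathrm{d}u_i^{\mathbb{H}}/\mathrm{d}s\ge\pi>0$, so backward trajectories in the strip are driven \emph{toward} the face $\{u_i=0\}$ (i.e.\ $r_i=+\infty$) rather than repelled from it, and they reach it in time at most $\eta/\pi$; your uniform bound $|\dot u|\le C$ cannot exclude this, because the boundary sits at finite $u$-distance, unlike in the Euclidean case where the phase space is all of $\mathbb{R}^N$. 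Concretely, for initial data with some $r_i(0)>l$ one has $\mathrm{d}r_i/\mathrm{d}s\ge\pi\sinh r_i$ along the reversed flow, so $r_i$ escapes to $+\infty$ in finite backward time. So the backward half of the hyperbolic statement does not follow from your argument as written --- your instinct that the two time directions are not symmetric was correct, but the asymmetry is an obstruction, not a technicality. For fairness, note that the paper's own proof also only rules out \emph{forward} blow-up (its $\tau$ is the maximal forward existence time and $l$ is chosen $>r_i(0)$), so your write-up matches everything the paper actually proves; the assertion $t\in\mathbb{R}$ in the hyperbolic case is established by neither argument.
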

\begin{proof}
Since $(2-d)\pi +2\pi\beta_i< K_i+2\pi\beta_i < 2\pi+2\pi\beta_i$, where $d=\max_{1\le i\le N}d_i$, and $d_i$ denotes the degree of vertex $i$. Note that
$$|s_\alpha^{\mathbb{E}} r_i^\alpha|\leq |s_\alpha^{\mathbb{E}}|\lVert r\rVert_{\alpha}^{\alpha}=2\pi|\chi(M)|,$$
$$|s_\alpha^{\mathbb{H}} \tanh^{\alpha} \frac{r_i}{2}|\leq |s_\alpha^{\mathbb{H}}|\lVert \tanh\frac{r}{2}\rVert_{\alpha}^{\alpha}=2\pi|\chi(M)|,$$
thus we have
\[(2-d)\pi +2\pi\beta_i-2\pi|\chi(M)|< K_i+2\pi\beta_i-s_\alpha^{\mathbb{E}} r_i^\alpha < 2\pi+2\pi\beta_i+2\pi|\chi(M)|\]
\[(2-d)\pi +2\pi\beta_i-2\pi|\chi(M)|< \ K_i+2\pi\beta_i-s_\alpha^{\mathbb{H}} \tanh^{\alpha} \frac{r_i}{2} < 2\pi+2\pi\beta_i+2\pi|\chi(M)|.\]
Set $a_1=(2-d)\pi +2\pi\beta_i-2\pi|\chi(M)|, a_2=2\pi+2\pi\beta_i+2\pi|\chi(M)|$. Notice \eqref{main flow E} can be rewritten as

\[\frac{\mathrm{d}r_i}{\mathrm{d}t} = [s_{\alpha}^{\mathbb{E}}r_{i}^{\alpha}-(K_i+2\pi\beta_i)]r_i,
\]
thus there exist two positive constants $c_1, c_2$ such that
\[
c_1e^{a_1t}<r_i(t)<c_2e^{a_2t}, \forall v_i\in V.
\]

Therefore, $r_i(t)$ is bounded above and bounded below away from $0$ in $\mathbb{E}^2$ in finite time. Then by the extension theorem in ODE theory, we get the long time existence of the solutions to \eqref{main flow E}. Similarly, \eqref{main flow H} can be rewritten as
\begin{equation}\label{ineq H}
\frac{\mathrm{d}r_i}{\mathrm{d}t} = [s_{\alpha}^{\mathbb{H}}\tanh^{\alpha}\frac{r_i}{2}-(K_i+2\pi\beta_i)]\sinh r_i.
\end{equation}

The inequality above also implies that $r_i$ is bounded below in $\mathbb{H}^2$ when $t$ is bounded.
On the other hand, if $r_i(t)$ is not bounded above when $t$ is bounded, then $\varlimsup_{t\rightarrow\tau}r_i(t)=+\infty$, where $\tau$ is the maximal existence time of the flow \eqref{main flow H}. For the vertex $v_i$, by Lemma \ref{Lemma eps}, we can choose $l > r_i(0)$ large enough so that, whenever $r_i>l$, the inner angle $\theta_i^{jk}$ is smaller than $\frac{\pi}{d_i}$. Thus, $K_i = 2\pi-\sum\theta_i^{jk}>\pi>0$. Choose a time $t_0$ such that $r_i(t_0)>l$, this can be done since \(\varlimsup_{t\rightarrow\tau}r_i(t)=+\infty\). Denote $p=\inf\{t_1 < t_0|r_i(t)>l,\forall t_1<t<\tau\}$, then $r_i(p)=l$. Since $K_i>0$ when $p\leq t\leq t_0$, then $r_i^{\prime}(t)=[s_{\alpha}^{\mathbb{H}}\tanh^{\alpha}\frac{r_i}{2}-(K_i+2\pi\beta_i)]\sinh r_i<0$ by \eqref{ineq H}. Hence $r_i(t)\leq r_i(p)=l$, which contradicts $r_i(t_0)>l$. Thus, in $\mathbb{H}^2$, $r_i$ is also bounded from above in $\mathbb{H}^2$ when $t$ is bounded. By the extension theorem, we obtain the long time existence of the solutions to \eqref{main flow H}.
This complete the proof.
\end{proof}

\section{Branched $\alpha$-Potentials}\label{section 4}
\hspace{14pt}
In this section, we introduce the branched $\alpha$-potentials in $\mathbb{E}^2$ and $\mathbb{H}^2$, which are key to prove the convergence of the solutions to the branched combinatorial Ricci flows \eqref{main flow E} \eqref{main flow H}.
\begin{definition}
We define branched $\alpha$-order combinatorial Ricci potentials (branched $\alpha$-potentials for short) as
\begin{equation}\label{main ricci potential E}
F^{\mathbb{E}}(u^{\mathbb{E}})=\int_{u_*^{\mathbb{E}}}^{u^{\mathbb{E}}}\sum_{i = 1}^{N}((K_i+2\pi\beta_i)-s_{\alpha}^{\mathbb{E}}r_{i}^{\alpha})\mathrm{d}u^{\mathbb{E}}_{i},u^{\mathbb{E}}\in \mathbb{R}^N
\end{equation}
in $\mathbb{E}^2$, and
\begin{equation}\label{main ricci potential H}
    F^{\mathbb{H}}(u^{\mathbb{H}})=\int_{u_*^{\mathbb{H}}}^{u^{\mathbb{H}}}\sum_{i = 1}^{N}((K_i+2\pi\beta_i)-s_{\alpha}^{\mathbb{H}}\tanh^\alpha\frac{r_i}{2})\mathrm{d}u_{i}^{\mathbb{H}},u^{\mathbb{H}}\in \mathbb{R}_{<0}^N
\end{equation}
in $\mathbb{H}^2$, respectively, where $u_*^{\mathbb{E}}$ and $u_*^{\mathbb{H}}$ are points arbitrarily chosen in their respective domains.
\end{definition}
\begin{remark}
These integrals were first introduced by de Verdiere \cite{DV}, who also first proved the
convexity of the functional $F$ in the zero weight case. Chow-Luo generalized de Verdiere's results to any weight case
(see Proposition 3.9 in \cite{2002Combinatorial}). By the branched $\alpha$-potentials \eqref{main ricci potential E} \eqref{main ricci potential H}, we can show the following important proposition.
\end{remark}
\begin{proposition}\label{prop origin potential}
The branched $\alpha$-flows \eqref{main flow E} and \eqref{main flow H} are precisely negative gradient flows of the branched $\alpha$-potentials \eqref{main ricci potential E} and \eqref{main ricci potential H} respectively.
\end{proposition}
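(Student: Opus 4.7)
The goal is to show that in each background geometry the right-hand side of the flow equation coincides with $-\nabla F$ in the appropriate $u$-coordinates. The plan is first to verify that the integrands in \eqref{main ricci potential E} and \eqref{main ricci potential H} are closed $1$-forms, so that $F^{\mathbb{E}}$ and $F^{\mathbb{H}}$ are genuinely path-independent functions of $u^{\mathbb{E}}$ and $u^{\mathbb{H}}$, and then to apply the fundamental theorem of calculus to read off $\partial F/\partial u_i$.

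For closedness, I would split each integrand into two pieces. The branch-point shift $2\pi\beta_i$ is a constant, hence trivially closed. The curvature piece $\sum_i K_i\,du_i$ is closed by the classical de\,Verdi\`ere--Chow--Luo symmetry $\partial K_i/\partial u_j = \partial K_j/\partial u_i$ in both Euclidean and hyperbolic background geometries (here using $u_i^{\mathbb{H}}=\ln\tanh(r_i/2)$, which is the standard hyperbolic coordinate under which the symmetry is known to hold). For the remaining ``scaling'' pieces, I would compute directly. In $\mathbb{E}^2$, writing $S=\sum_k r_k^{\alpha}$ and using $\partial r_i^{\alpha}/\partial u_j^{\mathbb{E}}=\alpha r_i^{\alpha}\delta_{ij}$, I would obtain
\begin{equation*}
\frac{\partial}{\partial u_j^{\mathbb{E}}}\bigl(s_\alpha^{\mathbb{E}} r_i^{\alpha}\bigr)
= 2\pi\chi(M)\Bigl(\frac{\alpha\,\delta_{ij}r_i^{\alpha}}{S}-\frac{\alpha\,r_i^{\alpha}r_j^{\alpha}}{S^{2}}\Bigr),
\end{equation*}
which is manifestly symmetric in $i,j$. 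The hyperbolic case is even cleaner: with $\tanh^{\alpha}(r_i/2)=e^{\alpha u_i^{\mathbb{H}}}$ and $T=\sum_k e^{\alpha u_k^{\mathbb{H}}}$, the analogous derivative is
\begin{equation*}
\frac{\partial}{\partial u_j^{\mathbb{H}}}\bigl(s_\alpha^{\mathbb{H}}\tanh^{\alpha}\tfrac{r_i}{2}\bigr)
= 2\pi\chi(M)\Bigl(\frac{\alpha\,\delta_{ij}e^{\alpha u_i^{\mathbb{H}}}}{T}-\frac{\alpha\,e^{\alpha u_i^{\mathbb{H}}}e^{\alpha u_j^{\mathbb{H}}}}{T^{2}}\Bigr),
\end{equation*}
again symmetric in $i,j$. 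Combined with the $K_i$ symmetry, this gives closedness on the convex domains $\mathbb{R}^N$ and $\mathbb{R}_{<0}^N$, which are simply connected, so $F^{\mathbb{E}}$ and $F^{\mathbb{H}}$ are well-defined.

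Once well-definedness is in hand, the fundamental theorem of calculus gives $\partial F^{\mathbb{E}}/\partial u_i^{\mathbb{E}} = (K_i+2\pi\beta_i)-s_\alpha^{\mathbb{E}}r_i^{\alpha}$ and $\partial F^{\mathbb{H}}/\partial u_i^{\mathbb{H}} = (K_i+2\pi\beta_i)-s_\alpha^{\mathbb{H}}\tanh^{\alpha}(r_i/2)$, so that \eqref{main flow E} reads $du_i^{\mathbb{E}}/dt=-\partial F^{\mathbb{E}}/\partial u_i^{\mathbb{E}}$ and \eqref{main flow H} reads $du_i^{\mathbb{H}}/dt=-\partial F^{\mathbb{H}}/\partial u_i^{\mathbb{H}}$, as claimed. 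The only genuinely nontrivial input is the symmetry of the curvature Jacobian in the hyperbolic $u^{\mathbb{H}}$-coordinate; I would simply cite the Chow--Luo statement rather than reprove it. Everything else is a direct quotient-rule computation, so I do not expect any real obstacle beyond bookkeeping.
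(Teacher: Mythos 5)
Your proposal is correct and follows essentially the same route as the paper: establish closedness of the $1$-forms via the de\,Verdi\`ere--Chow--Luo symmetry of the curvature Jacobian in the $u$-coordinates, invoke simple connectedness of $\mathbb{R}^N$ and $\mathbb{R}_{<0}^N$ for well-definedness, and read off $-\nabla F$ as the flow's right-hand side. Your only addition is the explicit quotient-rule verification that the scaling terms $s_\alpha^{\mathbb{E}}r_i^{\alpha}$ and $s_\alpha^{\mathbb{H}}\tanh^{\alpha}(r_i/2)$ contribute symmetric derivatives, a detail the paper leaves implicit (it surfaces only in Remark \ref{reason not sinh}, where the analogous $\sinh$ term fails), so this is a welcome but not structurally different elaboration.
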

\begin{proof}
First we can show that $\sum_{i = 1}^{N}((K_i+2\pi\beta_i)-s_{\alpha}^{\mathbb{E}}r_{i}^{\alpha})\mathrm{d}u^{\mathbb{E}}_{i}$ and $\sum_{i = 1}^{N}((K_i+2\pi\beta_i)-s_{\alpha}^{\mathbb{H}}\tanh^{\alpha}\frac{r_i}{2})\mathrm{d}u_{i}^{\mathbb{H}}$ are both closed forms by Lemma \ref{Lemma 4} and Lemma \ref{Lemma 4.5}. For this reason and both integrals \eqref{main ricci potential E} \eqref{main ricci potential H} lie within a simply connected region of their respective domains, thus \eqref{main ricci potential E} \eqref{main ricci potential H} are well defined. Noting that $\frac{\partial F^{\mathbb{E}}}{\partial u_i^{\mathbb{E}}}=K_i+2\pi\beta_i-s_{\alpha}^{\mathbb{E}}r_{i}^{\alpha}$ in $\mathbb{E}^2$ and $\frac{\partial F^{\mathbb{H}}}{\partial u_i^{\mathbb{H}}}=K_i+2\pi\beta_i-s_{\alpha}^{\mathbb{H}}\tanh^{\alpha}\frac{r_i}{2}$ in $\mathbb{H}^2$, it follows that $F^{\mathbb{E}}$ and $F^{\mathbb{H}}$ define negative gradient flows in $\mathbb{E}^2$ and $\mathbb{H}^2$, respectively, in other words, $\dot{u}^{\mathbb{E}}=-\nabla_{u^{\mathbb{E}}} F^{\mathbb{E}}$ and $\dot{u}^{\mathbb{H}}=-\nabla_{u^{\mathbb{H}}} F^{\mathbb{H}}$.
\end{proof}
\begin{remark}\label{reason not sinh}
If we define the branched $\alpha$-flow in $\mathbb{H}^2$ to be
\begin{equation}\label{wrong flow}
\frac{\mathrm{d}u_i^{\mathbb{H}}}{\mathrm{d}t} = s_{\alpha}^{\mathbb{H}}\sinh^{\alpha}\frac{r_i}{2}-(K_i+2\pi\beta_i),
\end{equation}
and define the corresponding branched $\alpha$-potential in $\mathbb{H}^2$ to be
\begin{equation}\label{wrong potential}
    F^{\mathbb{H}}(u^{\mathbb{H}})=\int_{u_*^{\mathbb{H}}}^{u^{\mathbb{H}}}\sum_{i = 1}^{N}((K_i+2\pi\beta_i)-s_{\alpha}^{\mathbb{H}}\sinh^\alpha\frac{r_i}{2})\mathrm{d}u_{i}^{\mathbb{H}},u^{\mathbb{H}}\in \mathbb{R}_{<0}^N.
\end{equation}
Then
\begin{equation}
\begin{aligned}
\frac{\partial^2 F^\mathbb{H}}{\partial u_i^\mathbb{H}\partial u_j^\mathbb{H}}&=\frac{\partial^2 K_i}{\partial u_i^\mathbb{H}\partial u_j^\mathbb{H}}-\alpha s_{\alpha}^{\mathbb{H}}(\delta_{ij}\sinh^\alpha\frac{r_i}{2}\cosh^2\frac{r_i}{2}-\frac{\sinh^\alpha\frac{r_i}{2}\sinh^\alpha\frac{r_j}{2}\cosh^2\frac{r_j}{2}}{\lVert \sinh\frac{r_i}{2}\rVert_{\alpha}^\alpha})\\
&\not=\frac{\partial^2 K_j}{\partial u_j^\mathbb{H}\partial u_i^\mathbb{H}}-\alpha s_{\alpha}^{\mathbb{H}}(\delta_{ij}\sinh^\alpha\frac{r_j}{2}\cosh^2\frac{r_j}{2}-\frac{\sinh^\alpha\frac{r_j}{2}\sinh^\alpha\frac{r_i}{2}\cosh^2\frac{r_i}{2}}{\lVert \sinh\frac{r_j}{2}\rVert_{\alpha}^\alpha})\\
&=\frac{\partial^2 F^\mathbb{H}}{\partial u_j^\mathbb{H}\partial u_i^\mathbb{H}}
\end{aligned}
\end{equation}
when $i\not =j$ and $\alpha \neq 0$. Thus, $\sum_{i = 1}^{N}((K_i+2\pi\beta_i)-s_{\alpha}^{\mathbb{H}}\sinh^{\alpha}\frac{r_i}{2})\mathrm{d}u_{i}^{\mathbb{H}}$ is not a closed form. Once this condition is not satisfied, the integral will be path-dependent, which implies that both integrals \eqref{main ricci potential E} \eqref{main ricci potential H} are not well-defined. This explains that why we choose \eqref{main flow H} instead of \eqref{wrong flow} as the definition of the branched $\alpha$-flow in $\mathbb{H}^2$.
\end{remark}

For the subsequent discussions, we need some lemmas.
\begin{lemma}\label{Lemma 1}
     Given a function $F\in C^2(\mathbb{R}^N)$. Assume $\text{Hess}(F)\geq0$, $\text{rank}(\text{Hess}(F)) = N - 1$, and $F(u + t(1,\ldots,1)^T)=F(u)$ for any $u\in\mathbb{R}^N$ and $t\in\mathbb{R}$. Denote $U=\{u\in\mathbb{R}^N|\sum_i u_i = 0\}$. Then the Hessian of $F|_{U}$ (considered as a function of $N - 1$ variables) is positive definite.
\end{lemma}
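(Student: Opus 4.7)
The plan is to exploit the interplay between the three hypotheses: translation invariance of $F$ forces the all-ones vector $\mathbf{1}=(1,\ldots,1)^{T}$ into the kernel of the Hessian, the rank hypothesis then pins down the kernel exactly, and the PSD property upgrades to strict positivity on the complement.

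First I would use the translation invariance. Differentiating the identity $F(u+t\mathbf{1})=F(u)$ twice with respect to $t$ gives
\[
0=\frac{d^{2}}{dt^{2}}F(u+t\mathbf{1})=\mathbf{1}^{T}\,\mathrm{Hess}(F)(u+t\mathbf{1})\,\mathbf{1},
\]
so evaluating at $t=0$ yields $\mathbf{1}^{T}\mathrm{Hess}(F)(u)\mathbf{1}=0$. Because $\mathrm{Hess}(F)\ge 0$, the standard PSD fact (if $A\succeq 0$ and $v^{T}Av=0$ then $Av=0$) forces $\mathrm{Hess}(F)(u)\,\mathbf{1}=0$. Combined with the rank hypothesis $\mathrm{rank}(\mathrm{Hess}(F))=N-1$, the kernel is exactly one-dimensional, namely $\ker\mathrm{Hess}(F)(u)=\mathrm{span}(\mathbf{1})$ at every $u$.

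Next I would relate this to the restriction. Since $U$ is a linear subspace, its tangent space at any point is $U$ itself, and the Hessian of $F|_{U}$ is simply the quadratic form $v\mapsto v^{T}\mathrm{Hess}(F)(u)v$ restricted to $v\in U$. Any nonzero $v\in U$ satisfies $\langle v,\mathbf{1}\rangle=\sum_{i}v_{i}=0$, so $v$ is not a scalar multiple of $\mathbf{1}$, hence $v\notin\ker\mathrm{Hess}(F)(u)$. Since $\mathrm{Hess}(F)\succeq 0$ and $v$ is not in its kernel, we obtain $v^{T}\mathrm{Hess}(F)(u)v>0$. To make this rigorous at the matrix level, I would fix an orthonormal basis $\{e_{1},\ldots,e_{N-1}\}$ of $U$ and write $\mathrm{Hess}(F|_{U})_{ij}=e_{i}^{T}\mathrm{Hess}(F)(u)e_{j}$; the preceding inequality then says exactly that this $(N-1)\times(N-1)$ matrix is positive definite.

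There is no real obstacle here; the argument is a short piece of linear algebra once the three hypotheses are lined up. The only care needed is the clean deduction $v^{T}Av=0\Rightarrow Av=0$ for a PSD matrix $A$ (which follows by writing $A=B^{T}B$ and concluding $Bv=0$), and remembering that the tangent space of a linear subspace is that subspace itself, so no genuine coordinate computation for the restricted Hessian is required.
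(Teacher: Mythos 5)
Your proof is correct and follows essentially the same route as the paper: translation invariance puts $(1,\ldots,1)^T$ in the kernel of the Hessian, the rank hypothesis makes that the entire kernel, and since $U$ is the orthogonal complement of this direction the restricted quadratic form is positive definite. Your write-up is in fact a bit more careful than the paper's — you justify $\mathbf{1}\in\ker\mathrm{Hess}(F)$ via the second $t$-derivative and the PSD fact $v^{T}Av=0\Rightarrow Av=0$, steps the paper leaves implicit — but the underlying argument is identical.
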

\begin{proof}
    Since $F$ is invariant along the direction $t(1,\ldots,1)^T$, the Hessian matrix $\text{Hess}\, F$ has the only zero eigenvalue with corresponding eigenvector $(1,\ldots,1)^T$. As $t(1,\ldots,1)^T$ is orthogonal to $U$, each eigenvalue of $\left. \text{Hess}\, F \right|_{U}$ is positive. Recalling that the Hessian of a function restricted to a linear subspace (viewed as a quadratic form) is the restriction of the original Hessian to that subspace, it follows that $\left. \text{Hess}\, F \right|_{U}$ is positive definite.
\end{proof}

Denote
\begin{equation}
J^{\mathbb{E}} = \frac{\partial(K_1+2\pi\beta_1,\ldots,K_N+2\pi\beta_N)}{\partial(u_1^{\mathbb{E}},\ldots,u_N^{\mathbb{E}})}=\frac{\partial(K_1,\ldots,K_N)}{\partial(u_1^{\mathbb{E}},\ldots,u_N^{\mathbb{E}})}
\end{equation} and
\begin{equation}
J^{\mathbb{H}} = \frac{\partial(K_1+2\pi\beta_1,\ldots,K_N+2\pi\beta_N)}{\partial(u_1^{\mathbb{H}},\ldots,u_N^{\mathbb{H}})}=\frac{\partial(K_1,\ldots,K_N)}{\partial(u_1^{\mathbb{H}},\ldots,u_N^{\mathbb{H}})}
\end{equation}
as the Jacobian matrices of the curvature maps in $\mathbb{E}^2$ and $\mathbb{H}^2$ respectively.

Let $\Sigma = \operatorname{diag}\{r_1, \ldots, r_N\}$ and $\Sigma^\alpha$ denote its $\alpha$-th power. Similarly, define $$\Pi = \operatorname{diag}\{\tanh\frac{r_1}{2}, \ldots, \tanh \frac{r_N}{2}\}$$ as another diagonal matrix and $\Pi^\alpha$ denotes its $\alpha$-th power. Regarding the Jacobian matrices of the curvature maps $J^{\mathbb{E}}$ and $J^{\mathbb{H}}$, Chow and Luo proved the following two lemmas.

\begin{lemma}[\cite{2002Combinatorial}, Proposition 3.9]\label{Lemma 4}
Given $(M,\mathcal{T},\Phi,\mathbb{E}^2)$ with a circle packing metric $r$ and $K=(K_1,\ldots,K_N)^T$ a discrete Gaussian curvature. $u^{\mathbb{E}}$ is the coordinate transformation of $r$ with $u_i^{\mathbb{E}}=\ln r_i$. Then $J^{\mathbb{E}} = \frac{\partial(K_1,\ldots,K_N)}{\partial(u_1^{\mathbb{E}},\ldots,u_N^{\mathbb{E}})}$ is symmetric and positive semi-definite with rank $N - 1$ and kernel $\{t(1,\ldots,1)^T|t\in\mathbb{R}\}$. Moreover, \[\begin{aligned}J_{ij}^{\mathbb{E}}=\begin{cases}>0,&v_j = v_i,\\<0,&v_j\sim v_i,\\0,&v_j\nsim v_i,v_j\neq v_i.\end{cases}\end{aligned}\]
\end{lemma}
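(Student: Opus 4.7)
The plan is to reduce the global claim about $J^{\mathbb{E}}$ to a local claim about the Jacobian associated to each face of $\mathcal{T}$, and then assemble. Since $K_i = 2\pi - \sum_{\Delta_{ijk}\ni v_i}\theta_i^{ijk}$ and each $\theta_i^{ijk}$ depends only on $r_i, r_j, r_k$ together with the weights on the edges of $\Delta_{ijk}$, we have
$$J^{\mathbb{E}}_{ij} \;=\; -\sum_{\Delta_{ijk}\ni v_i,\,v_j}\frac{\partial \theta_i^{ijk}}{\partial u_j^{\mathbb{E}}}, \qquad i\neq j,$$
with a similar sum for the diagonal entries. Thus the whole statement will follow once the analogous properties are established for the local $3\times 3$ matrix $M_\Delta := \partial(\theta_i^{ijk},\theta_j^{ijk},\theta_k^{ijk})/\partial(u_i^{\mathbb{E}},u_j^{\mathbb{E}},u_k^{\mathbb{E}})$ on a single face.

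For the local analysis, I would use the circle-packing edge-length formula $l_{ab}=\sqrt{r_a^2+r_b^2+2r_ar_b\cos\Phi_{ab}}$ together with the Euclidean law of cosines to obtain $\theta_a^{ijk}$ implicitly, and then differentiate in the $u^{\mathbb{E}}$-variables. The target properties of $-M_\Delta$ are: (i) symmetry $\partial\theta_a^{ijk}/\partial u_b^{\mathbb{E}}=\partial\theta_b^{ijk}/\partial u_a^{\mathbb{E}}$; (ii) the sign pattern $\partial\theta_a^{ijk}/\partial u_a^{\mathbb{E}}<0$ and $\partial\theta_a^{ijk}/\partial u_b^{\mathbb{E}}>0$ for $b\neq a$, where the strict inequality in the second uses the weight restriction $\Phi_{ab}\in[0,\pi/2]$; and (iii) $M_\Delta\,(1,1,1)^T=0$, which is immediate from the scaling invariance of Euclidean triangles (simultaneously multiplying $r_i,r_j,r_k$ by a common factor rescales the triangle without changing any angle). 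Items (i)--(iii) together force $-M_\Delta$ to be a symmetric, zero-row-sum matrix with positive diagonal and strictly negative off-diagonal entries, hence positive semi-definite with kernel exactly $\operatorname{span}\{(1,1,1)^T\}$, i.e.\ of rank $2$.

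Global assembly is now essentially formal. The matrix $J^{\mathbb{E}}$ is a sum of padded copies of $-M_\Delta$ ranging over faces $\Delta$, so symmetry, positive semi-definiteness, the sign pattern ($J_{ij}^{\mathbb{E}}>0$ when $j=i$, $J_{ij}^{\mathbb{E}}<0$ when $v_j\sim v_i$, and $J_{ij}^{\mathbb{E}}=0$ otherwise), and the inclusion $(1,\dots,1)^T\in\ker J^{\mathbb{E}}$ all transfer face-by-face. The one nontrivial global step is $\operatorname{rank}J^{\mathbb{E}}=N-1$. For this I would argue: if $v^T J^{\mathbb{E}} v=0$, then each face summand $v|_\Delta^{T}(-M_\Delta)v|_\Delta$ vanishes, which by the local kernel description forces $v$ to be constant on the three vertices of every face; connectedness of the $1$-skeleton $(V,E)$ of the triangulation then propagates this local constancy to yield $v\in\operatorname{span}\{(1,\dots,1)^T\}$.

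The main obstacle is the local computation behind (i) and (ii). Symmetry can be obtained either by a direct but somewhat tedious computation via the law of cosines, or more conceptually by exhibiting a smooth local potential $F_\Delta$ whose partial derivatives with respect to $u_i^{\mathbb{E}}$ are $-\theta_i^{ijk}$ and permutations thereof — this is precisely the local piece of the $\alpha=0$ Ricci potential implicitly invoked in Proposition~\ref{prop origin potential}, and its existence makes the symmetry automatic by the equality of mixed partials. Item (ii) reduces, after algebraic simplification, to showing that the quantity arising from differentiating the law of cosines is of the correct sign throughout the region $\Phi_{ab}\in[0,\pi/2]$; this is the geometric content of the weight restriction and is the one place where the hypothesis on $\Phi$ is actually used.
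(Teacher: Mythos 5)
The paper never proves Lemma \ref{Lemma 4}: it is imported verbatim from Chow--Luo (Proposition 3.9 of \cite{2002Combinatorial}), so there is no internal proof to compare against. Your sketch is, in substance, the standard Chow--Luo/Thurston argument for that cited result, and it is correct: the face-by-face decomposition $J^{\mathbb{E}}=\sum_{\Delta}(-M_\Delta)$ (padded), the scaling invariance of Euclidean angles giving $(1,1,1)^T\in\ker M_\Delta$ (valid because $l_{ab}$ is homogeneous of degree $1$ in the radii, which is exactly what fails in $\mathbb{H}^2$ and why the hyperbolic analogue, Lemma \ref{Lemma 4.5}, is positive definite), the resulting weighted-Laplacian structure of $-M_\Delta$ with symmetric positive edge weights, and the upgrade from $v^TJ^{\mathbb{E}}v=0$ to $v\in\operatorname{span}\{(1,\ldots,1)^T\}$ via connectedness of the $1$-skeleton. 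The two local facts you defer as the ``main obstacle'' --- the symmetry $\partial\theta_a/\partial u_b^{\mathbb{E}}=\partial\theta_b/\partial u_a^{\mathbb{E}}$ and the sign pattern $\partial\theta_a/\partial r_a<0$, $\partial\theta_a/\partial r_b>0$ under $\Phi\in[0,\pi/2]$ --- are precisely the statements this paper itself quotes elsewhere as Lemma \ref{ij} (Chow--Luo, A3) and Lemma \ref{triangle} (Thurston, Lemma 13.7.3), so your reduction closes modulo results the paper already takes from the literature. One step you state but could make explicit: passing from the vanishing of each face summand $v|_\Delta^{T}(-M_\Delta)v|_\Delta=0$ to $v|_\Delta\in\ker(-M_\Delta)$ uses that for a positive semi-definite matrix $A$, $v^TAv=0$ forces $Av=0$; with that observation spelled out, the rank-$(N-1)$ argument is complete.
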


\begin{lemma}[\cite{2002Combinatorial}, \cite{DV}]\label{Lemma 4.5}
Given $(M,\mathcal{T}, \Phi, \mathbb{H}^2)$ with a circle packing metric $r$ and $K$ a discrete Gaussian curvature. $u^{\mathbb{H}}$ is the coordinate transformation of $r$ with $u_i^{\mathbb{H}}=\ln\tanh\frac{r_i}{2}$. Then
$J^{\mathbb{H}} = \frac{\partial(K_1,\ldots,K_N)}{\partial(u_1^{\mathbb{H}},\ldots,u_N^{\mathbb{H}})}=J_A^{\mathbb{H}} + J_B^{\mathbb{H}}$
is symmetric and positive definite, where $J_A^{\mathbb{H}}$ is a positive definite diagonal matrix and $J_B^{\mathbb{H}}$ is symmetric and positive semi-definite with rank $N - 1$ and kernel $\{t(1,\ldots,1)^T|t\in\mathbb{R}\}$. Moreover, $J_B^{\mathbb{H}}$ could be written as

\[\begin{aligned}
    (J_B^{\mathbb{H}})_{ij}=\begin{cases}\sum_{v_k\sim v_i}B_{ik},&v_j = v_i,\\-B_{ij},&v_j\sim v_i,\\0,&v_j\nsim v_i,v_j\neq v_i,\end{cases}\end{aligned}\]
with $B_{ij}>0$.
\end{lemma}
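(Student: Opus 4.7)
The plan is to reduce the claim to a triangle-by-triangle analysis. Since $K_i = 2\pi - \sum_{\Delta_{ijk}\ni v_i}\theta_i^{jk}$, the Jacobian $J^{\mathbb{H}}$ decomposes as a sum over faces: letting $M^{\Delta}$ denote the negative of the Jacobian of $(\theta_i,\theta_j,\theta_k)$ with respect to $(u_i^{\mathbb{H}},u_j^{\mathbb{H}},u_k^{\mathbb{H}})$ on the triangle $\Delta=\Delta_{ijk}$, embedded into the obvious $N\times N$ block, one has $J^{\mathbb{H}}=\sum_{\Delta\in F} M^{\Delta}$. It therefore suffices to prove an analogous decomposition $M^{\Delta}=A^{\Delta}+B^{\Delta}$ per triangle and then sum over faces.

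The central input I would use is the symmetry identity $\partial\theta_i^{jk}/\partial u_j^{\mathbb{H}}=\partial\theta_j^{ik}/\partial u_i^{\mathbb{H}}$ for a single hyperbolic triangle. To establish it, I would apply the hyperbolic cosine law to express $\cos\theta_i^{jk}$ in terms of $l_{ij},l_{jk},l_{ik}$, substitute the circle-packing edge-length formula $l_{ab}=\cosh^{-1}(\cosh r_a\cosh r_b+\sinh r_a\sinh r_b\cos\Phi_{ab})$, and apply the chain rule through the transformation $u_a^{\mathbb{H}}=\ln\tanh(r_a/2)$; the symmetry falls out from the explicit form of $\partial l_{ab}/\partial u_a^{\mathbb{H}}$ after simplification. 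This is the classical computation of de Verdiere (zero weights) and Chow--Luo (general weights). Granting it, $M^{\Delta}$ is symmetric.

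Next I would invoke the hyperbolic Gauss--Bonnet identity $\theta_i+\theta_j+\theta_k=\pi-\mathrm{Area}(\Delta)$, from which each column sum of $M^{\Delta}$ equals $\partial\mathrm{Area}(\Delta)/\partial u_m^{\mathbb{H}}$, strictly positive since hyperbolic triangle area is strictly increasing in each radius. I would then set $A^{\Delta}$ to be the diagonal matrix whose $(m,m)$ entry is this column sum and put $B^{\Delta}:=M^{\Delta}-A^{\Delta}$. By construction $A^{\Delta}$ is positive definite and $B^{\Delta}$ has vanishing row and column sums, so $(1,1,1)^T\in\ker B^{\Delta}$. The main obstacle — and the remaining triangle-level task — is to verify that the off-diagonal entries of $B^{\Delta}$ are strictly negative, which would exhibit $B^{\Delta}$ as a weighted graph Laplacian on the complete graph on three vertices and hence force it to be positive semi-definite of rank $2$. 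This sign check reduces to a careful inspection of the explicit hyperbolic partial derivatives produced above, and is where the delicate trigonometric estimates enter.

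Assembling over faces then yields $J_A^{\mathbb{H}}:=\sum_{\Delta}A^{\Delta}$, diagonal and strictly positive (every vertex lies in at least one face), and $J_B^{\mathbb{H}}:=\sum_{\Delta}B^{\Delta}$ with the advertised sparsity pattern: off-diagonal entries $-B_{ij}$ with $B_{ij}>0$ assembled from the triangles sharing the edge $e_{ij}$, and diagonal $\sum_{v_k\sim v_i}B_{ik}$ by the vanishing-row-sum property. Connectedness of $M$ makes the resulting weighted Laplacian $J_B^{\mathbb{H}}$ irreducible, so its kernel is exactly $\mathrm{span}\{(1,\ldots,1)^T\}$ and its rank is $N-1$, completing the proof. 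Since $J^{\mathbb{H}}=J_A^{\mathbb{H}}+J_B^{\mathbb{H}}$ is the sum of a positive definite and a positive semi-definite matrix, it is positive definite, as asserted.
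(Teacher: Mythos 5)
Your proposal is sound, but note first that the paper itself contains no proof of Lemma \ref{Lemma 4.5}: the statement is imported verbatim from Chow--Luo and de Verdi\`ere, so there is no in-paper argument to diverge from, and your reconstruction is essentially the classical one. The only step you leave open --- strict negativity of the off-diagonal entries of $B^{\Delta}$, which you describe as the ``main obstacle'' requiring ``delicate trigonometric estimates'' --- is in fact immediate from a lemma the paper already quotes: Lemma \ref{triangle} (Thurston, Lemma 13.7.3), whose item (2) gives $\partial\theta_m/\partial r_n>0$ for $m\neq n$; since $u_n^{\mathbb{H}}=\ln\tanh(r_n/2)$ yields $\mathrm{d}u_n^{\mathbb{H}}/\mathrm{d}r_n=1/\sinh r_n>0$, the off-diagonal entries $-\partial\theta_m/\partial u_n^{\mathbb{H}}$ of $M^{\Delta}$ are strictly negative. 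Likewise your positive column sums are exactly item (3) of the same lemma read through hyperbolic Gauss--Bonnet, and the symmetry you propose to recompute from the cosine law is the paper's Lemma \ref{ij}; all the genuinely delicate trigonometry, including where the weight restriction $\Phi\in[0,\pi/2]$ enters, lives inside those two cited facts. Your per-triangle device $M^{\Delta}=A^{\Delta}+B^{\Delta}$, with $A^{\Delta}$ the diagonal of column sums, is moreover the same decomposition the paper implicitly uses in proving Proposition \ref{elementary}, where $A_{ii}=-D_i>0$ in $\mathbb{H}^2$ plays precisely the role of your $A^{\Delta}$. The assembly step is correct as written: each edge of a closed triangulated surface lies in two faces, so $(J_B^{\mathbb{H}})_{ij}=-B_{ij}$ with $B_{ij}>0$ when $v_i\sim v_j$; the vanishing row sums force the stated diagonal; connectedness of the $1$-skeleton makes the positively weighted Laplacian $J_B^{\mathbb{H}}$ have kernel exactly $\mathrm{span}\{t(1,\ldots,1)^T\}$ and rank $N-1$; and $J^{\mathbb{H}}=J_A^{\mathbb{H}}+J_B^{\mathbb{H}}$ is positive definite as the sum of a positive definite and a positive semi-definite matrix. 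In short: modulo citing Lemma \ref{triangle} and Lemma \ref{ij} rather than reproving them, your argument is complete and correct.
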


Next, we will prove the strict convexity of the branched $\alpha$-potentials \eqref{main ricci potential E} \eqref{main ricci potential H}.
\begin{theorem}\label{thm convex E}
The branched $\alpha$-potential \eqref{main ricci potential E} in $\mathbb{E}^2$ is strictly convex in the hyperplane $U=\{u\in\mathbb{R}^N|\sum_i u_i = 0\}\subset \mathbb{R}^N$.
\end{theorem}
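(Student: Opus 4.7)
The plan is to compute $\operatorname{Hess} F^{\mathbb{E}}$ explicitly on $\mathbb{R}^N$, show it is positive semi-definite of rank $N-1$ with kernel exactly $\mathbb{R}(1,\ldots,1)^T$, and then conclude strict convexity on $U$ by restricting to the orthogonal complement of that kernel (this is the same mechanism as Lemma \ref{Lemma 1}).

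First I would start from Proposition \ref{prop origin potential}, which gives $\partial_{u_i^{\mathbb{E}}} F^{\mathbb{E}} = K_i + 2\pi\beta_i - s_\alpha^{\mathbb{E}} r_i^\alpha$. Differentiating once more splits the Hessian as
\[
\operatorname{Hess} F^{\mathbb{E}} \;=\; J^{\mathbb{E}} + \Lambda^{\mathbb{E}}, \qquad \Lambda^{\mathbb{E}}_{ij} \;:=\; -\partial_{u_j^{\mathbb{E}}}\bigl(s_\alpha^{\mathbb{E}} r_i^\alpha\bigr).
\]
Using $r_i^\alpha = e^{\alpha u_i^{\mathbb{E}}}$, $S := \sum_k r_k^\alpha$, and $s_\alpha^{\mathbb{E}} = 2\pi\chi(M)/S$, a direct calculation yields
\[
\Lambda^{\mathbb{E}}_{ij} \;=\; -2\pi\alpha\chi(M)\left(\frac{r_i^\alpha}{S}\delta_{ij} - \frac{r_i^\alpha r_j^\alpha}{S^2}\right),
\]
which is manifestly symmetric (this also re-verifies the closedness of the defining 1-form). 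Introducing the positive probability weights $p_i := r_i^\alpha/S$, for any $x\in\mathbb{R}^N$,
\[
x^T \Lambda^{\mathbb{E}} x \;=\; -2\pi\alpha\chi(M)\left[\sum_i p_i x_i^2 - \Big(\sum_i p_i x_i\Big)^2\right] \;=\; -2\pi\alpha\chi(M)\operatorname{Var}_p(x) \;\ge\; 0,
\]
since $\chi(M)\le 0$ and $\alpha\ge 0$, with equality iff $x\in\mathbb{R}\mathbf{1}$.

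Combining this with Lemma \ref{Lemma 4}, which says $J^{\mathbb{E}}$ is symmetric, PSD, with kernel exactly $\mathbb{R}\mathbf{1}$, I conclude $\operatorname{Hess} F^{\mathbb{E}}$ is PSD on $\mathbb{R}^N$. For any nonzero $x\in U$, $x\notin \mathbb{R}\mathbf{1}$, so Lemma \ref{Lemma 4} already gives $x^T J^{\mathbb{E}} x > 0$, hence $x^T \operatorname{Hess} F^{\mathbb{E}} x > 0$ regardless of whether the second term vanishes (which covers the edge cases $\alpha=0$ or $\chi(M)=0$). Therefore $\operatorname{Hess} F^{\mathbb{E}}$ has rank $N-1$ with kernel exactly $\mathbb{R}\mathbf{1}$, and its restriction to $U$—which is the Hessian of $F^{\mathbb{E}}|_U$ as a function of $N-1$ variables—is positive definite, yielding strict convexity.

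I expect no serious obstacle: the two Hessian contributions decouple, the variance (Jensen) identity handles $\Lambda^{\mathbb{E}}$, and Lemma \ref{Lemma 4} handles $J^{\mathbb{E}}$. One minor subtlety is that Lemma \ref{Lemma 1} literally requires $F^{\mathbb{E}}(u+t\mathbf{1})=F^{\mathbb{E}}(u)$, whereas a short calculation using $\sum_i K_i = 2\pi\chi(M) = s_\alpha^{\mathbb{E}}\cdot S$ gives $F^{\mathbb{E}}(u+t\mathbf{1})-F^{\mathbb{E}}(u) = 2\pi t\sum_j \beta_j$, which is nonzero once there are genuine branch points. The kernel computation above bypasses this directly; alternatively, one may apply Lemma \ref{Lemma 1} to $\tilde F^{\mathbb{E}}(u):=F^{\mathbb{E}}(u) - \tfrac{2\pi\sum_j \beta_j}{N}\sum_i u_i$, which is genuinely translation-invariant along $\mathbf{1}$ and has the same Hessian as $F^{\mathbb{E}}$.
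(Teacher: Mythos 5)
Your proposal is correct, and up to the Hessian formula it runs parallel to the paper: both compute $\operatorname{Hess}_{u^{\mathbb{E}}}F^{\mathbb{E}} = J^{\mathbb{E}} - \alpha s_{\alpha}^{\mathbb{E}}\bigl(\Sigma^{\alpha} - r^{\alpha}(r^{\alpha})^T/\lVert r\rVert_{\alpha}^{\alpha}\bigr)$ (your $\Lambda^{\mathbb{E}}$ is exactly this correction term) and both lean on Lemma \ref{Lemma 4} for $J^{\mathbb{E}}$. Where you diverge is the positivity mechanism for the correction term: the paper conjugates by $\Sigma^{\alpha/2}$, takes an orthonormal eigenbasis of $\Lambda_{\alpha}^{\mathbb{E}}=\Sigma^{-\alpha/2}J^{\mathbb{E}}\Sigma^{-\alpha/2}$ with $h_0 = r^{\alpha/2}/\lVert r^{\alpha/2}\rVert$ spanning the kernel, checks that $I - h_0h_0^T$ annihilates $h_0$ and fixes the other eigenvectors, and reads off the spectrum $\{0,\ \lambda_i(\Lambda_{\alpha}^{\mathbb{E}})-\alpha s_{\alpha}^{\mathbb{E}}\}$; you instead test the quadratic form directly and recognize a weighted variance, $x^T\Lambda^{\mathbb{E}}x = -2\pi\alpha\chi(M)\operatorname{Var}_p(x)\ge 0$ with equality iff $x\in\mathbb{R}\mathbf{1}$. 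Your route is more elementary (no simultaneous diagonalization or eigenbasis bookkeeping), handles the degenerate cases $\alpha=0$ and $\chi(M)=0$ uniformly, and identifies the kernel of the full Hessian directly, so positive definiteness on $U$ follows at once; what the paper's spectral argument buys is the explicit eigenvalue shift $\lambda_i - \alpha s_{\alpha}^{\mathbb{E}}$, which is informative but not needed for the theorem.

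Your closing remark is a genuine catch rather than a pedantic one: the paper concludes by invoking Lemma \ref{Lemma 1}, whose hypothesis $F(u+t\mathbf{1})=F(u)$ fails for the branched potential. Indeed, since $\sum_i K_i = 2\pi\chi(M) = s_{\alpha}^{\mathbb{E}}\lVert r\rVert_{\alpha}^{\alpha}$ and both $K_i$ and $s_{\alpha}^{\mathbb{E}}r_i^{\alpha}$ are invariant under uniform scaling, one gets $F^{\mathbb{E}}(u+t\mathbf{1}) = F^{\mathbb{E}}(u) + 2\pi t\sum_j\beta_j$, which is nonzero whenever genuine branch points are present. The paper's own computation does implicitly locate the kernel (namely $\Sigma^{-\alpha/2}h_0\in\mathbb{R}\mathbf{1}$), so its conclusion is unharmed, but as written the appeal to Lemma \ref{Lemma 1} is not literally justified in the branched case. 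Either of your repairs, the direct kernel identification or applying Lemma \ref{Lemma 1} to $\tilde F^{\mathbb{E}}(u)=F^{\mathbb{E}}(u)-\tfrac{2\pi\sum_j\beta_j}{N}\sum_i u_i$, which is translation-invariant along $\mathbf{1}$ and has the same Hessian, closes this gap cleanly.
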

\begin{proof}
    Set $\Lambda_\alpha^{\mathbb{E}}=\Sigma^{-\frac{\alpha}{2}}J^{\mathbb{E}}\Sigma^{-\frac{\alpha}{2}}$. $\{\lambda_i(\Lambda_\alpha^{\mathbb{E}})\}_{1\le i\le N-1}$ denote its all positive eigenvalues. By direct calculation, we have
\begin{equation}
    \text{Hess}_{u^{\mathbb{E}}} \ F^{\mathbb{E}} = J^{\mathbb{E}} - \alpha s_{\alpha}^{\mathbb{E}} \left( \Sigma^{\alpha} - \frac{r^{\alpha}(r^{\alpha})^T}{\lVert r \rVert_{\alpha}^{\alpha}} \right) = \Sigma^{\frac{\alpha}{2}} \left( \Lambda_{\alpha}^{\mathbb{E}} - \alpha s_{\alpha}^{\mathbb{E}} \left( I - \frac{r^{\frac{\alpha}{2}}(r^{\frac{\alpha}{2}})^T}{\lVert r \rVert_{\alpha}^{\alpha}} \right) \right) \Sigma^{\frac{\alpha}{2}}
\end{equation}in $\mathbb{E}^2$.
By Lemma \ref{Lemma 4}, since $J^{\mathbb{E}}$ is a symmetric matrix whose rank is $N-1$, and $\Sigma$ is a diagonal matrix which is full rank, then we can choose an orthonormal matrix $P=(h_0,h_1,\cdots,h_{N-1})$ such that $P^T \Lambda_\alpha^{\mathbb{E}} P=\operatorname{diag}\{0,\lambda_1(\Lambda_\alpha^{\mathbb{E}}),\cdots,\lambda_{N-1}(\Lambda_\alpha^{\mathbb{E}})\}$, where $h_{i-1}$ denotes the $i$-th column of $P$, then $\Lambda_\alpha^{\mathbb{E}} h_0=0, \Lambda_\alpha^{\mathbb{E}} h_i=\lambda_ih_i,1\le i\le N-1$. By Lemma \ref{Lemma 4}, since the kernel of $J^{\mathbb{E}}$ is $t(1,1,\cdots,1)^T$, then $h_0=\frac{r^{\frac{\alpha}{2}}}{\lVert r^{ \frac{\alpha}{2}}\rVert}$ and $r^{\frac{\alpha}{2}}\perp h_i,1\le i\le N-1.$ Hence
\[
\left(I - \frac{r^{\frac{\alpha}{2}} (r^{\frac{\alpha}{2}})^T}{\|r\|_{\alpha}^{\alpha}}\right)h_0 = 0
\]
and
\[
\left(I - \frac{r^{\frac{\alpha}{2}} (r^{\frac{\alpha}{2}})^T}{\|r\|_{\alpha}^{\alpha}}\right)h_i = h_i, 1 \leq i \leq N - 1
\]
in $\mathbb{E}^2$, which implies
\[
\text{Hess}_{u^{\mathbb{E}}} F^{\mathbb{E}} = \Sigma^{\frac{\alpha}{2}} P \operatorname{diag}\left\{0,\lambda_1(\Lambda_{\alpha}^{\mathbb{E}}) - \alpha s_{\alpha}^{\mathbb{E}}, \ldots, \lambda_{N - 1}(\Lambda_{\alpha}^{\mathbb{E}}) - \alpha s_{\alpha}^{\mathbb{E}}\right\} P^T \Sigma^{\frac{\alpha}{2}}
\]in $\mathbb{E}^2$. Since $\alpha\ge0$ and $\chi(M)\le0$, $\alpha s_{\alpha}^{\mathbb{E}} = \alpha \frac{2\pi \chi(M)}{\|r\|_{\alpha}^{\alpha}}\le 0$. Combined with the fact $\lambda_i^{\mathbb{E}}(\Lambda_{\alpha})>0,1\le i\le N-1$, $\text{Hess}_{u^{\mathbb{E}}}F^{\mathbb{E}} \geq 0$, $\text{rank}(\text{Hess}_{u^{\mathbb{E}}} F^{\mathbb{E}}) = N - 1$.  By Lemma \ref{Lemma 1}, it follows that $F^{\mathbb{E}}$ is strictly convex in $U$.
\end{proof}

\begin{theorem}\label{thm convex H}
The branched $\alpha$-potential \eqref{main ricci potential H} in $\mathbb{H}^2$ is strictly convex in $\mathbb{R}_{<0}^N$.
\end{theorem}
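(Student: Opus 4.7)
The plan is to mimic the computation used in Theorem \ref{thm convex E}, but take advantage of the fact that in hyperbolic background geometry the Jacobian $J^{\mathbb{H}}$ is already fully positive definite (Lemma \ref{Lemma 4.5}), so no restriction to a hyperplane and no appeal to Lemma \ref{Lemma 1} should be needed. Using $u_i^{\mathbb{H}} = \ln\tanh(r_i/2)$, so that $\tanh^{\alpha}(r_i/2) = e^{\alpha u_i^{\mathbb{H}}}$, and differentiating the integrand
$(K_i+2\pi\beta_i)-s_{\alpha}^{\mathbb{H}}\tanh^{\alpha}\frac{r_i}{2}$
together with the definition $s_{\alpha}^{\mathbb{H}} = 2\pi\chi(M)/\lVert\tanh(r/2)\rVert_{\alpha}^{\alpha}$, I would first obtain
\begin{equation*}
\operatorname{Hess}_{u^{\mathbb{H}}} F^{\mathbb{H}} \;=\; J^{\mathbb{H}} \;-\; \alpha s_{\alpha}^{\mathbb{H}}\left(\Pi^{\alpha} - \frac{\tau\tau^T}{\lVert\tanh(r/2)\rVert_{\alpha}^{\alpha}}\right),
\end{equation*}
where $\tau = (\tanh^{\alpha}(r_1/2),\ldots,\tanh^{\alpha}(r_N/2))^T$.

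Next I would factor out $\Pi^{\alpha/2}$ exactly as in the Euclidean case, writing
\begin{equation*}
\operatorname{Hess}_{u^{\mathbb{H}}} F^{\mathbb{H}} \;=\; \Pi^{\alpha/2}\left(\Lambda_{\alpha}^{\mathbb{H}} - \alpha s_{\alpha}^{\mathbb{H}}\left(I - \frac{\sigma\sigma^T}{\lVert\sigma\rVert^2}\right)\right)\Pi^{\alpha/2},
\end{equation*}
with $\Lambda_{\alpha}^{\mathbb{H}} = \Pi^{-\alpha/2} J^{\mathbb{H}} \Pi^{-\alpha/2}$ and $\sigma = (\tanh^{\alpha/2}(r_1/2),\ldots,\tanh^{\alpha/2}(r_N/2))^T$, noting that $\lVert\sigma\rVert^2 = \lVert\tanh(r/2)\rVert_{\alpha}^{\alpha}$. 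Then three observations finish the proof: (i) $\Lambda_{\alpha}^{\mathbb{H}}$ is positive definite, because $\Pi^{-\alpha/2}$ is a full-rank diagonal matrix and $J^{\mathbb{H}}$ is positive definite by Lemma \ref{Lemma 4.5}; (ii) $I - \sigma\sigma^T/\lVert\sigma\rVert^2$ is the orthogonal projection onto $\sigma^{\perp}$, hence positive semi-definite (a one-line Cauchy--Schwarz argument); (iii) since $\alpha\ge 0$ and $\chi(M)\le -1<0$, we have $\alpha s_{\alpha}^{\mathbb{H}} \le 0$, so $-\alpha s_{\alpha}^{\mathbb{H}}\ge 0$. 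Adding a positive definite matrix to a positive semi-definite matrix yields a positive definite matrix, and conjugation by the full-rank $\Pi^{\alpha/2}$ preserves this, so $\operatorname{Hess}_{u^{\mathbb{H}}} F^{\mathbb{H}} \succ 0$ throughout $\mathbb{R}_{<0}^N$.

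There is no serious obstacle here: the whole argument is a streamlined version of the Euclidean proof, and is in fact shorter because the extra diagonal piece $J_A^{\mathbb{H}}$ in Lemma \ref{Lemma 4.5} eliminates the degeneracy along the $(1,\ldots,1)$-direction that forced the hyperplane restriction in Theorem \ref{thm convex E}. The only points requiring care are making sure the coordinate $u_i^{\mathbb{H}} = \ln\tanh(r_i/2)$ is used consistently (so that $\partial\tanh^{\alpha}(r_i/2)/\partial u_j^{\mathbb{H}} = \alpha\delta_{ij}\tanh^{\alpha}(r_i/2)$), and recording the sign of $s_{\alpha}^{\mathbb{H}}$ correctly via $\chi(M)\le -1$ as specified in the definition of the flow \eqref{main flow H}.
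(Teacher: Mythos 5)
Your proof is correct, and it reaches the conclusion by a leaner route than the paper. Both arguments start identically: the same Hessian formula $\operatorname{Hess}_{u^{\mathbb{H}}}F^{\mathbb{H}} = J^{\mathbb{H}} - \alpha s_{\alpha}^{\mathbb{H}}\bigl(\Pi^{\alpha} - \tau\tau^{T}/\lVert\tanh\frac{r}{2}\rVert_{\alpha}^{\alpha}\bigr)$ and the same conjugation by $\Pi^{\alpha/2}$ to produce $\Lambda^{\mathbb{H}}_{\alpha} - \alpha s_{\alpha}^{\mathbb{H}}(I - \sigma\sigma^{T}/\lVert\sigma\rVert^{2})$. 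Where you diverge is in how positivity of this matrix is certified. The paper keeps the splitting $J^{\mathbb{H}} = J_{A}^{\mathbb{H}} + J_{B}^{\mathbb{H}}$ from Lemma \ref{Lemma 4.5}, conjugates both pieces, builds an orthonormal matrix $Q$ diagonalizing $\Lambda_{\alpha,B}^{\mathbb{H}}$, and uses the kernel identification $g_{0} = \sigma/\lVert\sigma\rVert$ to show that $\Lambda_{\alpha,B}^{\mathbb{H}} - \alpha s_{\alpha}^{\mathbb{H}}(I - \sigma\sigma^{T}/\lVert\sigma\rVert^{2})$ is congruent to an explicit diagonal matrix $\operatorname{diag}\{0, \lambda_{1} - \alpha s_{\alpha}^{\mathbb{H}}, \ldots, \lambda_{N-1} - \alpha s_{\alpha}^{\mathbb{H}}\}$, positive semi-definite of rank $N-1$, before adding back the positive definite piece $Q^{T}\Lambda_{\alpha,A}^{\mathbb{H}}Q$. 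You bypass all of this: since Lemma \ref{Lemma 4.5} asserts that the full $J^{\mathbb{H}}$ is positive definite, $\Lambda_{\alpha}^{\mathbb{H}}$ is positive definite by congruence, the projector $I - \sigma\sigma^{T}/\lVert\sigma\rVert^{2}$ is positive semi-definite by Cauchy--Schwarz, and $-\alpha s_{\alpha}^{\mathbb{H}} \ge 0$, so the sum is positive definite with no eigendecomposition and, crucially, no need to know how the kernel of $J_{B}^{\mathbb{H}}$ aligns with $\sigma$. That alignment (kernel of $J^{\mathbb{E}}$ mapping to the $\sigma$-direction) is genuinely indispensable in the Euclidean case of Theorem \ref{thm convex E}, where the degeneracy along $(1,\ldots,1)^{T}$ must be matched against the projector's kernel and Lemma \ref{Lemma 1} invoked; your observation is that in $\mathbb{H}^{2}$ the diagonal piece $J_{A}^{\mathbb{H}}$ already removes the degeneracy, so the paper's diagonalization is more machinery than the hyperbolic statement requires. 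What the paper's longer computation buys in exchange is explicit spectral information (the eigenvalues $\lambda_{i}(\Lambda_{\alpha,B}^{\mathbb{H}}) - \alpha s_{\alpha}^{\mathbb{H}}$ and the rank-$(N-1)$ structure of $L$), which parallels the Euclidean proof line by line but is not needed for strict convexity itself.
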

\begin{proof}
By Lemma \ref{Lemma 4.5}, Let $J^{\mathbb{H}}=J_A^{\mathbb{H}}+J_B^{\mathbb{H}}$, where $J_A^{\mathbb{H}}$ is a positive definite diagonal matrix and $J_B^{\mathbb{H}}$ is a positive semi-definite matrix with rank $N - 1$ and kernel $t(1,\ldots,1)^T$. Set $\Lambda_{\alpha,A}^{\mathbb{H}}=\Pi^{-\frac{\alpha}{2}}J_A^{\mathbb{H}}\Pi^{-\frac{\alpha}{2}}$,
$\Lambda_{\alpha,B}^{\mathbb{H}}=\Pi^{-\frac{\alpha}{2}}J_B^{\mathbb{H}}\Pi^{-\frac{\alpha}{2}}$. By direct calculation, we have
\begin{equation}
\begin{aligned}
    \text{Hess}_u^{\mathbb{H}} \ F^{\mathbb{H}} &= J^{\mathbb{H}} - \alpha s_{\alpha}^{\mathbb{H}} \left( \Pi^\alpha-\frac{\tanh^{\alpha}\frac{r}{2}(\tanh^\alpha\frac{r}{2})^T}{\lVert \tanh \frac{r}{2} \rVert_{\alpha}^{\alpha}} \right) \\&= \Pi^{\frac{\alpha}{2}} \left( \Lambda_{\alpha,A}^{\mathbb{H}}+ \Lambda_{\alpha,B}^{\mathbb{H}} - \alpha s_{\alpha}^{\mathbb{H}} \left( I - \frac{\tanh^{\frac{\alpha}{2}}\frac{r}{2}(\tanh^{\frac{\alpha}{2}}\frac{r}{2})^T}{\lVert \tanh \frac{r}{2} \rVert_{\alpha}^{\alpha}} \right) \right)  \Pi^{\frac{\alpha}{2}}
\end{aligned}
\end{equation}
in $\mathbb{H}^2$. Since $J_B^{\mathbb{H}}$ is a symmetric matrix whose rank is $N-1$ and $\Pi$ is a diagonal matrix that is full rank, we can choose an orthonormal matrix $Q=(g_0,g_1,\cdots,g_{N-1})$ such that
$Q^T \Lambda_{\alpha,B}^{\mathbb{H}} Q=\operatorname{diag}\{0,\lambda_1(\Lambda_{\alpha,B}^{\mathbb{H}}),\cdots,\lambda_{N-1}(\Lambda_{\alpha,B}^{\mathbb{H}})\}$, where $g_{i-1}$ denotes the $i$-th column of $Q$, then $\Lambda_{\alpha,B}^{\mathbb{H}} g_0=0,\Lambda_{\alpha,B}^{\mathbb{H}} g_i=\lambda_ig_i,1\le i\le N-1$. By Lemma \ref{Lemma 4.5}, since the kernel of $J_B$ is $t(1,1,\cdots,1)^T$, then $g_0=\frac{\tanh^\frac{\alpha}{2}\frac{r}{2}}{\lVert \tanh^\frac{\alpha}{2}\frac{r}{2}\rVert}$ and $\tanh^\frac{\alpha}{2}\frac{r}{2}\perp g_i,1\le i\le N-1.$ Hence
\[
\left(I - \frac{\tanh^{\frac{\alpha}{2}}\frac{r}{2}(\tanh^{\frac{\alpha}{2}}\frac{r}{2})^T}{\lVert \tanh \frac{r}{2} \rVert_{\alpha}^{\alpha}} \right)g_0 = 0
\]
and
\[
\left(I - \frac{\tanh^{\frac{\alpha}{2}}\frac{r}{2}(\tanh^{\frac{\alpha}{2}}\frac{r}{2})^T}{\lVert \tanh \frac{r}{2} \rVert_{\alpha}^{\alpha}} \right)g_i = g_i, 1 \leq i \leq N - 1
\]
in $\mathbb{H}^2$. Set
\[
    L=Q^T\left( \Lambda_{\alpha,B}^{\mathbb{H}} -  \alpha s_{\alpha}^{\mathbb{H}} \left( I - \frac{\tanh^{\frac{\alpha}{2}}\frac{r}{2}(\tanh^{\frac{\alpha}{2}}\frac{r}{2})^T}{\lVert \tanh \frac{r}{2} \rVert_{\alpha}^{\alpha}}  \right) \right)Q,
\]
then the above reasoning implies
\[\begin{aligned}
    L=\begin{pmatrix}
0 & \\
 & \lambda_1(\Lambda_{\alpha,B}^{\mathbb{H}}) - \alpha s_{\alpha}^{\mathbb{H}}& &\\
& & \lambda_2(\Lambda_{\alpha,B}^{\mathbb{H}}) - \alpha s_{\alpha}^{\mathbb{H}} & &\\
& & &  \ddots &  \\
& & & & &  \lambda_{N-1}(\Lambda_{\alpha,B}^{\mathbb{H}}) - \alpha s_{\alpha}^{\mathbb{H}}\end{pmatrix}.
\end{aligned}
\]Since $\alpha\ge0$ and $\chi(M)\le0$, $\alpha s_\alpha^{\mathbb{H}}=\alpha \frac{2\pi\chi(M)}{\|\tanh \frac{r}{2}\|_{\alpha}^{\alpha}}\leq 0$. Then $L$ is positive semi-definite with rank $N-1$. Since the positive definiteness is invariant under congruent transformation, $P^T\Lambda_{\alpha,A}^{\mathbb{H}}P$ is positive definite. Note that the sum of a positive definite matrix and a positive semi-definite matrix is still positive definite, thereby
\[
\text{Hess}_{u^{\mathbb{H}}} F^{\mathbb{H}} = \Pi^{\frac{\alpha}{2}} Q (Q^T\Lambda_{\alpha,A}^{\mathbb{H}}Q+L) Q^T \Pi^{\frac{\alpha}{2}}>0,
\]
 it follows that $F^{\mathbb{H}}$ is strictly convex in $\mathbb{R}_{<0}^N$.
\end{proof}

\section{Exponential Convergence}\label{section 5}
\hspace{14pt}
In this section, after showing the properness of the branched $\alpha$-potentials \eqref{main ricci potential E} and \eqref{main ricci potential H}, we prove the convergence of the solutions to the branched $\alpha$-flows \eqref{main flow E} \eqref{main flow H}. \par
First, we demonstrate the existence of $u_H^{\mathbb{E}}$ and $u_H^{\mathbb{H}}$.
\begin{lemma}[\cite{2002Combinatorial}]\label{injective}
     Suppose $\Omega\subset\mathbb{R}^N$ is convex, the function $h:\Omega\to\mathbb{R}$ is strictly convex, then the map $\nabla h:\Omega\to\mathbb{R}^N$ is injective.
\end{lemma}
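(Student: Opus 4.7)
The plan is to prove this standard convex-analysis lemma by contradiction, reducing to the one-dimensional case via restriction to a line segment. Suppose for contradiction that $\nabla h$ is not injective, so there exist distinct points $x, y \in \Omega$ with $\nabla h(x) = \nabla h(y)$. Since $\Omega$ is convex, the entire segment $\{(1-t)x + ty : t \in [0,1]\}$ lies in $\Omega$, so I can define
\[
g : [0,1] \to \mathbb{R}, \qquad g(t) = h\bigl((1-t)x + ty\bigr).
\]
This $g$ is well-defined, differentiable, and inherits strict convexity from $h$: indeed, for any $0 \le s < t \le 1$ and $\lambda \in (0,1)$, convex-combining the arguments inside $h$ and applying the strict convexity of $h$ (using $x \neq y$ so the intermediate points are distinct) yields $g(\lambda s + (1-\lambda)t) < \lambda g(s) + (1-\lambda)g(t)$.

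The key step is that a differentiable strictly convex function on an interval has a strictly increasing derivative. By the chain rule,
\[
g'(t) = \bigl\langle \nabla h\bigl((1-t)x + ty\bigr),\, y - x\bigr\rangle,
\]
so in particular
\[
g'(0) = \langle \nabla h(x),\, y - x\rangle, \qquad g'(1) = \langle \nabla h(y),\, y - x\rangle.
\]
Strict monotonicity of $g'$ gives $g'(0) < g'(1)$, i.e.\ $\langle \nabla h(y) - \nabla h(x),\, y-x\rangle > 0$. But the assumption $\nabla h(x) = \nabla h(y)$ forces this inner product to vanish, a contradiction. Hence $\nabla h$ is injective.

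The only non-routine step is the claim that a differentiable strictly convex $g$ on $[0,1]$ has strictly increasing derivative; this follows from the mean value theorem applied on subintervals together with the standard first-order characterization $g(t) > g(s) + g'(s)(t-s)$ for $t \neq s$, which itself is obtained by letting $\lambda \to 0^+$ in the strict convexity inequality (the strictness is preserved because dividing $g(s + \lambda(t-s)) - g(s) < \lambda[g(t) - g(s)]$ by $\lambda$ and passing to the limit yields $g'(s)(t-s) \le g(t) - g(s)$, and a separate argument — picking an intermediate point $r$ strictly between $s$ and $t$ — upgrades this to a strict inequality). I do not expect any genuine obstacle; the whole argument is a textbook exercise, and the one-dimensional reduction cleanly sidesteps any multi-variable complications in the domain $\Omega$.
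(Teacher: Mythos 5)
Your proof is correct, and it is the standard argument: the paper itself does not prove this lemma (it is quoted from Chow--Luo \cite{2002Combinatorial}), and the classical proof there is precisely your one-dimensional reduction, showing $\langle \nabla h(y)-\nabla h(x),\,y-x\rangle>0$ for $x\neq y$ via the strictly increasing derivative of $g(t)=h((1-t)x+ty)$. Your handling of the one delicate point --- that the first-order inequality $g(t)>g(s)+g'(s)(t-s)$ stays strict, obtained by combining the weak limit inequality with strict convexity at an intermediate point --- is exactly right, and differentiability of $h$ is legitimately taken from the statement itself, since the lemma presupposes the gradient map $\nabla h$ exists on $\Omega$.
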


\begin{theorem}\label{thm critical point E}
The branched $\alpha$-potential $F^{\mathbb{E}}$ \eqref{main ricci potential E} satisfies $\lim_{\lVert u^{\mathbb{E}}\rVert\to +\infty} F^{\mathbb{E}}(u^{\mathbb{E}})=+\infty$ in $U=\{u^{\mathbb{E}}\in\mathbb{R}^N|\sum_i u_i^{\mathbb{E}} = 0\}\subset \mathbb{R}^N$, therefore $F^{\mathbb{E}}|_U$ is proper and has a unique critical point. Furthermore, there exists a unique constant branched $\alpha$-metric ${r_*}_H^{\mathbb{E}}$ in U.
\end{theorem}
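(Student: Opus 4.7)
The plan is to combine the strict convexity of $F^{\mathbb{E}}|_U$ established in Theorem \ref{thm convex E} with a coercivity argument on $U$. Once properness ($F^{\mathbb{E}}(u^{\mathbb{E}}) \to +\infty$ as $\|u^{\mathbb{E}}\| \to \infty$ within $U$) is in hand, the proper, strictly convex function $F^{\mathbb{E}}|_U$ attains its minimum at a unique point $u_H^{\mathbb{E}} \in U$, which is simultaneously its unique critical point; uniqueness can alternatively be read off from Lemma \ref{injective}. Setting ${r_*}_H^{\mathbb{E}} := \exp(u_H^{\mathbb{E}})$, the critical point equation unwinds into $K_i + 2\pi\beta_i = s_{\alpha}^{\mathbb{E}} r_i^{\alpha}$ at $r = {r_*}_H^{\mathbb{E}}$, producing the unique constant branched $\alpha$-metric in $U$.

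My strategy for coercivity is to split the integrand $(K_i + 2\pi\beta_i) - s_{\alpha}^{\mathbb{E}} r_i^{\alpha}$ into two pieces. The $s_{\alpha}^{\mathbb{E}} r_i^{\alpha}$ piece is uniformly bounded by $2\pi|\chi(M)|$, as observed in Proposition \ref{long time existance}; moreover $s_{\alpha}^{\mathbb{E}} r_i^{\alpha}$ is scale-invariant in the ambient $\mathbb{R}^N$, so on the slice $U$ (which already quotients out the scaling direction) its contribution to $F^{\mathbb{E}}$ is controlled by $\|u^{\mathbb{E}}\|$ times a topological constant and is dominated by the curvature part. The curvature part $\sum_i (K_i + 2\pi\beta_i)\,\mathrm{d}u_i^{\mathbb{E}}$ is precisely the integrand of the branched combinatorial Ricci potential studied by Lan-Dai, and I would either invoke their properness result directly or adapt their proof.

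The real obstacle is the coercivity of this curvature piece, which depends on the branch structure condition in Definition \ref{def branch}. Concretely, for any sequence $u^{(n)} \in U$ with $\|u^{(n)}\| \to \infty$, one passes to a subsequence and partitions $V$ into the blow-up set $I = \{i : u_i^{(n)} \to +\infty\}$, the blow-down set $J = \{j : u_j^{(n)} \to -\infty\}$, and the bounded remainder. Choosing a simple closed cycle $\Gamma$ enclosing $I$ and applying the cycle inequality $\sum_{e_j \in \Gamma}[\pi - \Phi(e_j)] > 2(l(\Gamma)+1)\pi$ of Definition \ref{def branch} yields a uniform strictly positive lower bound on $\sum_{i \in I}(K_i + 2\pi\beta_i)$ along the sequence. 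Integrating this positive contribution along the unbounded direction forces the line integral defining $F^{\mathbb{E}}$ to tend to $+\infty$. Dubejko's realizability criterion is precisely calibrated to make this estimate go through; everything else is routine once this estimate is in place.
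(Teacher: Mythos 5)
Your overall architecture coincides with the paper's: coercivity of $F^{\mathbb{E}}$ on the slice $U$, a minimum point by compactness of sublevel sets, uniqueness of the critical point via strict convexity (Theorem \ref{thm convex E}) together with Lemma \ref{injective}, and unwinding the critical point equation into the constant branched $\alpha$-metric. But your coercivity argument --- the actual content of the theorem --- has two genuine gaps. First, the split of the integrand into a curvature piece and the $s_{\alpha}^{\mathbb{E}}r_i^{\alpha}$ piece, with the latter ``dominated'' because it is bounded by $2\pi|\chi(M)|$, does not work as stated: both pieces have bounded integrands, so both contribute at most linearly in $\lVert u^{\mathbb{E}}\rVert$ to the line integral, and mere boundedness gives no domination --- if the linear growth rate $\gamma$ you extract from the curvature part happens to be smaller than $2\pi|\chi(M)|$, the estimate collapses. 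The paper never splits the integrand; it exploits the \emph{sign}: since $\alpha\ge 0$ and $\chi(M)\le 0$, one has $s_{\alpha}^{\mathbb{E}}\le 0$, so along a blow-up coordinate ($u_k^{\mathbb{E}}\to+\infty$) the term $-s_{\alpha}^{\mathbb{E}}r_k^{\alpha}\ge 0$ only reinforces the positivity $K_k\ge\gamma_k>0$, while along a blow-down coordinate ($u_l^{\mathbb{E}}\to-\infty$) the paper uses $s_{\alpha}^{\mathbb{E}}r_l^{\alpha}\to 0^-$ as $r_l\to 0$, so this term is absorbed into the gap constant $\gamma_l$ coming from the branch structure, cf.\ \eqref{case 2 int}. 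Scale invariance of $s_{\alpha}^{\mathbb{E}}r_i^{\alpha}$ does not help either, since the dangerous unbounded directions in $U$ are precisely the non-radial ones.

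Second, you aim the cycle inequality of Definition \ref{def branch} at the wrong degeneration and in the wrong direction. That inequality pays off when the enclosed radii tend to \emph{zero}: then $\theta_l\to\sum_{v_l}[\pi-\Phi(e^{(v_l)})]>2(\beta_l+1)\pi$, which yields a \emph{negative upper bound} $K_l+2\pi\beta_l\le-\gamma_l<0$ for $u_l^{\mathbb{E}}<L_l$, and it is this negativity, integrated toward $-\infty$ against the orientation, that makes the corresponding term of $F^{\mathbb{E}}$ diverge to $+\infty$. For your blow-up set $I$ no branch condition is involved at all: large radii force small inner angles (Lemma \ref{Lemma eps}, whose Euclidean analogue the paper invokes), so $K_k\ge\gamma_k>0$ eventually, and the integrand is bounded below by $\gamma_k+2\pi\beta_k>0$. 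Your claim that a cycle enclosing $I$ gives a ``uniform strictly positive lower bound on $\sum_{i\in I}(K_i+2\pi\beta_i)$'' is therefore both the wrong tool and, where the tool does apply (on the blow-down set $J$), the wrong sign of bound. A further point your sketch omits: on the blow-up side, if several \emph{adjacent} radii blow up at comparable rates the inner angles between them need not vanish (Euclidean scale invariance), so the small-angle estimate is not automatic; the paper handles this by combining the constraint $\sum_i u_i^{\mathbb{E}}=0$ with the reduction $F^{\mathbb{E}}(u^{\mathbb{E}})=F^{\mathbb{E}}(u^{\mathbb{E}}-2u_k^{\mathbb{E}})$, which converts the problematic simultaneous blow-up into the blow-down case already controlled by the branch structure. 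Without the sign argument for $s_{\alpha}^{\mathbb{E}}$, the corrected orientation of the cycle estimate, and this reduction step, the coercivity claim is not established.
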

\begin{proof}
Recall that
\[
F^{\mathbb{E}}(u^{\mathbb{E}})=\int_{u_*^{\mathbb{E}}}^{u^{\mathbb{E}}}\sum_{i = 1}^{N}((K_i+2\pi\beta_i)-s_{\alpha}^{\mathbb{E}}r_{i}^{\alpha})\mathrm{d}u^{\mathbb{E}}_{i},u^{\mathbb{E}}\in \mathbb{R}^N.
\]Denote $r_*=({r_*}_1,\ldots,{r_*}_N)=(\exp({u_*^{\mathbb{E}}}_1),\ldots,\exp({u_*^{\mathbb{E}}}_N))=\exp(u_*^{\mathbb{E}})$. Scaling \(r_*\) to any \(cr_*\) with \(c > 0\), the corresponding branched $\alpha$-curvature \eqref{curvature E} is multiplied by a constant $1/c^{\alpha}$, and the corresponding branched $\alpha$-flow \eqref{main flow E} is still invariant. We term this property the scaling equivalence. Hence, in the subsequent content of this section, we may suppose \(r_*\) lies in the hypersurface \(\prod_{1\le i\le N} {r_*}_i = 1\) , that is, $\sum_{1\leq i\leq N} {u_*^{\mathbb{E}}}_i=0$. In the following, we consider $u^{\mathbb{E}}\in U$.

It follows from the definition of $F^{\mathbb{E}}$ and the fact (\ref{main ricci potential E}) is a closed $1$-form in $\mathbb{R}^N$ that,
\begin{equation}\label{sum potentials E}
\begin{aligned}
F^{\mathbb{E}}(u^{\mathbb{E}}) &= \int_{u_{*_1}^{\mathbb{E}}}^{u_1^{\mathbb{E}}} [K_1(s_1, u_{*_2}^{\mathbb{E}}, u_{*_3}^{\mathbb{E}}, \ldots,u_{*_N}^{\mathbb{E}}) + 2\beta_1\pi-s_\alpha^{\mathbb{E}}r_1^\alpha] \mathrm{d}s_1 + \cdots \\
& + \int_{u_{*_k}^{\mathbb{E}}}^{u_k^{\mathbb{E}}} [K_k(u_{1}^{\mathbb{E}}, \ldots, u_{{k-1}}^{\mathbb{E}}, s_k, u_{*_{k+1}}^{\mathbb{E}}, \ldots,u_{*_N}^{\mathbb{E}}) + 2\beta_k\pi-s_\alpha^{\mathbb{E}}r_k^\alpha] \mathrm{d}s_k + \cdots \\
& + \int_{u_{*_l}^{\mathbb{E}}}^{u_l^{\mathbb{E}}} [K_l(u_{1}^{\mathbb{E}}, \ldots, u_{l-1}^{\mathbb{E}}, s_l, u_{*_{l+1}}^{\mathbb{E}}, \ldots,  u_{*_{N}}^{\mathbb{E}}) + 2\beta_l\pi-s_\alpha^{\mathbb{E}}r_l^\alpha] \mathrm{d}s_l + \cdots \\
& + \int_{u_{*_N}^{\mathbb{E}}}^{u_N^{\mathbb{E}}} [K_N(u_{1}^{\mathbb{E}},u_{2}^{\mathbb{E}}, u_{3}^{\mathbb{E}}, \ldots, u_{N-1}^{\mathbb{E}}, s_N) + 2\beta_N\pi-s_\alpha^{\mathbb{E}}r_N^\alpha] \mathrm{d}s_N.
\end{aligned}
\end{equation}

Note that $u^ {\mathbb{E}}\in U$, that is, $\sum_{i = 1}^{N}u_i^{\mathbb{E}} = 0$. When $\lVert u^{\mathbb{E}}\rVert \to\infty$, it implies that there exist at least two components $u_k^{\mathbb{E}}$ and $u_l^{\mathbb{E}}$ such that $u_k^{\mathbb{E}} \to+\infty$ and $u_l^{\mathbb{E}}\to-\infty$.

\begin{enumerate}[(i)]
    \item $u_k^{\mathbb{E}} \to+\infty$

Without loss of generality, assume that the number of $u_j^{\mathbb{E}(v_k)}$ (where $u_j^{\mathbb{E}(v_k)}$ is associated with the vertices adjacent to $v_k^{\mathbb{E}}$) that tend to $\infty$ is no more than 2. Since $F^{\mathbb{E}}(u^{\mathbb{E}})=F^{\mathbb{E}}(u^{\mathbb{E}}-2u_k^{\mathbb{E}})$, if not, considering $F^{\mathbb{E}}(u^{\mathbb{E}}-2u_k^{\mathbb{E}})$ is equivalent to the case of $u_k^{\mathbb{E}} \to-\infty$, which is (\ref{case 2}). By $u_i^{\mathbb{E}}=\ln r_i$ and Lemma \ref{Lemma eps}, we get that there exists a positive constant $L_k$ such that for any $u_k^{\mathbb{E}} > L_k$, $K_k(u^{\mathbb{E}})=2\pi-\theta_k\geq\gamma_k > 0 \geq s_\alpha^{\mathbb{E}}r_k^\alpha \geq 2\pi \chi(M)$. Thus we obtain

\begin{equation}\label{case 1 int}
\begin{aligned}
&\int_{u_{*_k}^{\mathbb{E}}}^{u_k^{\mathbb{E}}} [K_k(u_{1}^{\mathbb{E}}, \ldots, u_{{k-1}}^{\mathbb{E}}, s_k, u_{*_{k+1}}^{\mathbb{E}}, \ldots,u_{*_N}^{\mathbb{E}}) + 2\beta_k\pi-s_\alpha^{\mathbb{E}}r_k^\alpha] \mathrm{d}s_k > (\gamma_k + 2\beta_k\pi)(u_k^{\mathbb{E}} - L_k) \\
&+ \int_{u_{*_k}^{\mathbb{E}}}^{L_k} [K_k(u_{1}^{\mathbb{E}}, \ldots, u_{{k-1}}^{\mathbb{E}}, s_k, u_{*_{k+1}}^{\mathbb{E}}, \ldots,u_{*_N}^{\mathbb{E}}) + 2\beta_k\pi-s_\alpha^{\mathbb{E}}r_k^\alpha] \mathrm{d}s_k.
\end{aligned}
\end{equation}

\item\label{case 2} $u_l^{\mathbb{E}} \to-\infty$

As $u_l^{\mathbb{E}} \to -\infty$ implies $r_l \to 0$, by Lemma \ref{Lemma eps}, we have $\theta_l = \sum_{v_l} \theta_l^{(v_l)} \to \sum_{v_l} \left[ \pi - \varPhi(e^{(v_l)}) \right]$ as $u_l^{\mathbb{E}} \to -\infty$. Given that $br(H)$ is a branch structure for the weighted triangulation $(M,\mathcal{T},\Phi)$, it follows that $\sum_{v_l} \left[ \pi - \varPhi(e^{(v_l)}) \right] - (2\beta_l + 2)\pi > 0$ by definition \ref{def branch}. Thus, there exists a constant $L_l < 0$ such that for all $u_l^{\mathbb{E}} < L_l$, $\theta_l - (2\beta_l + 2)\pi \geq \gamma_l$, where $\gamma_l > 0$ is a constant. Notice when $r_l\to 0$, there is $s_\alpha^{\mathbb{E}}r_l^\alpha\to 0^-$. Therefore, we obtain

\begin{equation}\label{case 2 int}
\begin{aligned}
&\int_{u_{*_l}^{\mathbb{E}}}^{u_l^{\mathbb{E}}} [K_l(u_{1}^{\mathbb{E}}, \ldots, u_{l-1}^{\mathbb{E}}, s_l, u_{*_{l+1}}^{\mathbb{E}}, \ldots,  u_{*_{N}}^{\mathbb{E}}) + 2\beta_l\pi-s_\alpha^{\mathbb{E}}r_l^\alpha] \mathrm{d}s_l \\
&\geq \int_{u_l^{\mathbb{E}}}^{\ L_l} \gamma_l \mathrm{d}s_l + \int_{u_{*_l}^{\mathbb{E}}}^{\ L_l} [(2\beta_l + 2)\pi - \theta_l] \mathrm{d}s_l \\
&= \gamma_l(L_l - u_l^{\mathbb{E}}) + \int_{u_{*_l}^{\mathbb{E}}}^{\ L_l} [(2\beta_l + 2)\pi - \theta_l] \mathrm{d}s_l.
\end{aligned}
\end{equation}

Thus, when $\|u^{\mathbb{E}}\| \to +\infty$, both equation (\ref{case 1 int}) and equation (\ref{case 2 int}) tend to $+\infty$. By applying the same reasoning as presented earlier, it is straightforward to observe that, as $\|u^{\mathbb{E}}\| \to \infty$, the remaining terms on the right-hand side of equation \eqref{sum potentials E} are either finite or approach $+\infty$. Consequently, we conclude that within the subspace $U$, if $\|u^{\mathbb{E}}\| \to \infty$, then $F^{\mathbb{E}}(u^{\mathbb{E}})\to+\infty$.
\end{enumerate}

Since $\lim_{\|u^{\mathbb{E}}\|\to\infty}F^{\mathbb{E}}(u^{\mathbb{E}})=+\infty$, there exists a positive number $X$ such that when $\lVert u^{\mathbb{E}} \rVert>X$ then $F^{\mathbb{E}}(u^{\mathbb{E}})>F^{\mathbb{E}}(0)+1$. For the closed ball $B(0,X)\subset\mathbb{R}^N$, when $u^{\mathbb{E}}\not\in B(0,X)\cap U$, $F^{\mathbb{E}}(u^{\mathbb{E}})>F^{\mathbb{E}}(0)+1$. Notice $B(0,X)\cap U$ is compact in $U$ and $F^{\mathbb{E}}$ is continuous, so there exists a point $u^{\mathbb{E}}_c\in B(0,X)\cap U$ such that $F(u^{\mathbb{E}}_c)=\min_{u^{\mathbb{E}}\in B(0,X)\cap U}F^{\mathbb{E}}(u^{\mathbb{E}})$. This leads to $F^{\mathbb{E}}(u^{\mathbb{E}})>F^{\mathbb{E}}(0)+1> F^{\mathbb{E}}(u^{\mathbb{E}}_c),u^{\mathbb{E}}\not\in B(0,X)\cap U$ and $F^{\mathbb{E}}(u^{\mathbb{E}})\geq F^{\mathbb{E}}(u^{\mathbb{E}}_c),u^{\mathbb{E}}\in B(0,X)\cap U$, which means $u^{\mathbb{E}}_c$ is a
critical point. The uniqueness of the critical point is ensured by the strict convexity of $F^{\mathbb{E}}$ in $U$ by Theorem \ref{thm convex E} and Lemma \ref{injective}. Moreover, it is easy to see that $u^{\mathbb{E}}_c$ is the minimal point of $F^{\mathbb{E}}|_U$ and therefore $F^{\mathbb{E}}|_U$ is proper. Since $\frac{\partial F^{\mathbb{E}}}{\partial u_i^{\mathbb{E}}}=K_i+2\pi\beta_i-s_{\alpha}^{\mathbb{E}}r_{i}^{\alpha}$ in $\mathbb{E}^2$, a critical point of $F^{\mathbb{E}}$ is precisely a constant branched $\alpha$-metric in $U$. This completes the proof.
\end{proof}

\begin{remark}\label{r_H}
The above theorem guarantees the existence and uniqueness of a constant branched $\alpha$-curvature circle packing metric \({r_*}_H^{\mathbb{E}}\) in $U$ relying only on topological-combinatorial conditions. Furthermore, for any positive constant $c$, \(c{r_*}_H^{\mathbb{E}}\) is also a constant branched $\alpha$-metric in $\mathbb{E}^2$. If $r$ is a constant branched $\alpha$-metric in $\mathbb{E}^2$, then \(r/\prod_i r_i\) is equivalent to \({r_*}_H^{\mathbb{E}}\). Consequently, all constant branched $\alpha$-metrics in $\mathbb{E}^2$ form a ray \(\{c{r_*}_H^{\mathbb{E}}\}_{c > 0}\).
\end{remark}
\begin{theorem}\label{thm critical point H}
The branched $\alpha$-potential $F^{\mathbb{H}}$ satisfies $\lim_{\lVert u^{\mathbb{H}}\rVert\to \infty} F^{\mathbb{H}}(u^{\mathbb{H}})=+\infty$ in $\mathbb{R}_{<0}^N$. Additionally, $F^{\mathbb{H}}$ is proper and has a unique critical point in $\mathbb{R}_{<0}^N$. Furthermore, there exists a unique constant branched $\alpha$-metric $r_H^{\mathbb{H}}$ in $\mathbb{R}_{<0}^N$.
\end{theorem}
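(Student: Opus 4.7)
The plan is to follow the template of Theorem \ref{thm critical point E}: establish coercivity of $F^{\mathbb{H}}$ on $\mathbb{R}_{<0}^N$, combine with continuity to obtain a global minimum (hence a critical point), then invoke Theorem \ref{thm convex H} together with Lemma \ref{injective} to force uniqueness. Two features of the hyperbolic setting simplify matters compared with the Euclidean case: since hyperbolic geometry carries an intrinsic length scale, no scaling reduction to a hyperplane $U$ is needed, and since $u_i^{\mathbb{H}}=\ln\tanh(r_i/2)<0$ the domain is one-sided, so $\|u^{\mathbb{H}}\|\to\infty$ forces at least one coordinate $u_l^{\mathbb{H}}\to-\infty$ (equivalently $r_l\to 0^+$). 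Only the analogue of case (ii) from the earlier proof is therefore needed to produce the divergence of $F^{\mathbb{H}}$.

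For the coercivity step I would rewrite $F^{\mathbb{H}}$ as the telescoping iterated integral modeled on \eqref{sum potentials E}, now with integrands $K_i+2\pi\beta_i-s_{\alpha}^{\mathbb{H}}\tanh^{\alpha}(r_i/2)$. Each such integrand is uniformly bounded on $\mathbb{R}_{<0}^N$ because $(2-d)\pi\leq K_i\leq 2\pi$ and $|s_{\alpha}^{\mathbb{H}}\tanh^{\alpha}(r_i/2)|\leq 2\pi|\chi(M)|$, as recorded in the proof of Proposition \ref{long time existance}. Given a coordinate $u_l^{\mathbb{H}}\to-\infty$, a standard hyperbolic law-of-cosines computation for the degenerating triangle yields $\theta_l^{jk}\to\pi-\Phi(e_{jk})$, hence $\theta_l\to\sum_{e\ni v_l}[\pi-\Phi(e)]$. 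By Definition \ref{def branch} applied to the star-loop around $v_l$, this limit exceeds $(2\beta_l+2)\pi$, and because $-s_{\alpha}^{\mathbb{H}}\tanh^{\alpha}(r_l/2)\to 0^+$, there exist $L_l<u_{*_l}^{\mathbb{H}}$ and $\gamma_l>0$ such that the $l$-th integrand is $\leq-\gamma_l$ for all $u_l^{\mathbb{H}}<L_l$. Splitting $\int_{u_{*_l}^{\mathbb{H}}}^{u_l^{\mathbb{H}}}$ at $L_l$ produces a bounded head and a tail of size $\geq\gamma_l(L_l-u_l^{\mathbb{H}})\to+\infty$. The remaining coordinate integrals cannot contribute $-\infty$: if $u_i^{\mathbb{H}}$ stays in a compact subset of $\mathbb{R}_{<0}$ or tends to $0^-$, the integrand is bounded and the integration length is at most $|u_{*_i}^{\mathbb{H}}|$; if $u_i^{\mathbb{H}}\to-\infty$, the same branch-structure estimate forces a positive divergence. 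Hence $F^{\mathbb{H}}(u^{\mathbb{H}})\to+\infty$.

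Coercivity and continuity yield a global minimum of $F^{\mathbb{H}}$ on $\mathbb{R}_{<0}^N$, which is automatically a critical point, and properness is immediate. Uniqueness of the critical point follows from the strict convexity of $F^{\mathbb{H}}$ established in Theorem \ref{thm convex H} by applying Lemma \ref{injective} to $\nabla F^{\mathbb{H}}$. Since $\partial F^{\mathbb{H}}/\partial u_i^{\mathbb{H}}=K_i+2\pi\beta_i-s_{\alpha}^{\mathbb{H}}\tanh^{\alpha}(r_i/2)$, a critical point is exactly a constant branched $\alpha$-metric $r_H^{\mathbb{H}}$, yielding existence and uniqueness. I expect the main technical obstacle to be the sign bookkeeping in the second paragraph: when several coordinates simultaneously tend to $-\infty$, one must verify that each contributes a positive infinity with no cancellation, which reduces to checking that on every such range $ds_i<0$ while the integrand is eventually negative, so the signs multiply to produce $+\infty$ in every diverging term.
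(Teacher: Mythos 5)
Your coercivity computation is sound and matches the paper's: on the one-sided domain, $\|u^{\mathbb{H}}\|\to\infty$ forces some $u_l^{\mathbb{H}}\to-\infty$, and the branch-structure inequality of Definition \ref{def branch} gives $\theta_l-(2\beta_l+2)\pi\ge\tilde{\gamma}_l>0$ eventually, producing a linearly divergent tail with no cancellation from the other terms. The genuine gap is the next sentence: ``Coercivity and continuity yield a global minimum of $F^{\mathbb{H}}$ on $\mathbb{R}_{<0}^N$.'' This is false as stated, because $\mathbb{R}_{<0}^N$ is open with boundary at \emph{finite} distance: a minimizing sequence may drift to a point where some $u_l^{\mathbb{H}}=0$ (i.e.\ $r_l\to+\infty$) without $\|u^{\mathbb{H}}\|$ becoming large, and $F^{\mathbb{H}}$ does not blow up there --- it extends continuously to $\mathbb{R}_{\le 0}^N$. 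A one-dimensional counterexample to your inference is $F(u)=-u$ on $(-\infty,0)$: coercive as $u\to-\infty$, continuous, yet strictly decreasing with no critical point, the infimum leaking to the boundary $u=0$. You in fact noticed the regime $u_i^{\mathbb{H}}\to 0^-$ in your sign bookkeeping, but only to check that such integrals stay bounded; boundedness there is precisely what makes the boundary dangerous for existence, and your argument never excludes it.

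The paper closes exactly this hole with a second estimate at the other end of the domain: if $u_l^{\mathbb{H}}\to 0^-$ then $r_l\to+\infty$, so Lemma \ref{Lemma eps} gives $K_l=2\pi-\theta_l\ge\tilde{\gamma}_l>0$, while $\chi(M)\le-1$ forces $s_{\alpha}^{\mathbb{H}}\tanh^{\alpha}\frac{r_l}{2}<0$; hence the $l$-th integrand is bounded below by $\tilde{\gamma}_l+2\beta_l\pi>0$ near the boundary, so each partial function $f_l(u)=F^{\mathbb{H}}(u_1^{\mathbb{H}},\ldots,u,\ldots,u_N^{\mathbb{H}})$ is strictly increasing on some interval $(\delta_l,0)$. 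Combining this monotonicity with the continuous extension $\tilde{F}^{\mathbb{H}}$ to $\mathbb{R}_{\le 0}^N$ and compactness of $B(0,Y)\cap\mathbb{R}_{\le 0}^N$ pins the minimizer strictly inside $\mathbb{R}_{<0}^N$, where it is a critical point. Your remaining steps --- uniqueness via Theorem \ref{thm convex H} and Lemma \ref{injective}, and the identification $\frac{\partial F^{\mathbb{H}}}{\partial u_i^{\mathbb{H}}}=K_i+2\pi\beta_i-s_{\alpha}^{\mathbb{H}}\tanh^{\alpha}\frac{r_i}{2}$ of critical points with constant branched $\alpha$-metrics --- are correct and identical to the paper's; but note that your claim ``properness is immediate'' inherits the same boundary issue and, in the paper too, is only drawn after the interior minimum has been located.
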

\begin{proof}
First we have
\begin{equation}
\begin{aligned}
F^{\mathbb{H}}(u^{\mathbb{H}}) &= \int_{u_{*_1}^{\mathbb{H}}}^{u_1^{\mathbb{H}}} [K_1(s_1, u_{*_2}^{\mathbb{H}}, u_{*_3}^{\mathbb{H}}, \ldots,u_{*_N}^{\mathbb{H}}) + 2\beta_1\pi-s_{\alpha}^{\mathbb{H}}\tanh^{\alpha}\frac{r_1}{2}] \mathrm{d}s_1 + \cdots \\
&+ \int_{u_{*_k}^{\mathbb{H}}}^{u_k^{\mathbb{H}}} [K_k(u_{1}^{\mathbb{H}}, \ldots, u_{{k-1}}^{\mathbb{H}}, s_k, u_{*_{k+1}}^{\mathbb{H}}, \ldots,u_{*_N}^{\mathbb{H}}) + 2\beta_k\pi-s_{\alpha}^{\mathbb{H}}\tanh^{\alpha}\frac{r_k}{2}] \mathrm{d}s_k + \cdots \\
&+ \int_{u_{*_l}^{\mathbb{H}}}^{u_l^{\mathbb{H}}} [K_l(u_{1}^{\mathbb{H}}, \ldots, u_{l-1}^{\mathbb{H}}, s_l, u_{*_{l+1}}^{\mathbb{H}}, \ldots,  u_{*_{N}}^{\mathbb{H}}) + 2\beta_l\pi-s_{\alpha}^{\mathbb{H}}\tanh^{\alpha}\frac{r_l}{2}] \mathrm{d}s_l + \cdots \\
&+ \int_{u_{*_N}^{\mathbb{H}}}^{u_N^{\mathbb{H}}} [K_N(u_{1}^{\mathbb{H}},u_{2}^{\mathbb{H}}, u_{3}^{\mathbb{H}}, \ldots, u_{N-1}^{\mathbb{H}}, s_N) + 2\beta_N\pi-s_{\alpha}^{\mathbb{H}}\tanh^{\alpha}\frac{r_N}{2}] \mathrm{d}s_N.
\end{aligned}
\end{equation}
If $\|u^{\mathbb{H}}\|\to +\infty$, note that $u^{\mathbb{H}}\in(-\infty, 0)^N$, so it is sufficient to focus only on the situation where $u_k^{\mathbb{H}}\to-\infty$. Given the relation $u_k^{\mathbb{H}} = \ln\tanh(r_k/2)$, we know that as $u_k^{\mathbb{H}}\to-\infty$, $r_k\to 0$. Using arguments analogous to those used in the Euclidean case, we can show that when $u_k^{\mathbb{H}}\to-\infty$, there exists a constant $\tilde{L}_k < 0$ such that for any $u_k^{\mathbb{H}} < \tilde{L}_k$, the following inequality holds:
$\theta_{k}-(2\beta_k + 2)\pi\geq \tilde{\gamma}_k > 0$. Notice when $r_k\to 0$, there is $s_\alpha^{\mathbb{H}}r_k^\alpha\to 0^-$. Consequently, we have:
\begin{equation}
\begin{aligned}
&\int_{u_{*_k}^{\mathbb{H}}}^{u_k^{\mathbb{E}}} [K_k(u_{1}^{\mathbb{H}}, \ldots, u_{{k-1}}^{\mathbb{H}}, s_k, u_{*_{k+1}}^{\mathbb{H}}, \ldots,u_{*_N}^{\mathbb{H}}) + 2\beta_k\pi-s_{\alpha}^{\mathbb{H}}\tanh^{\alpha}\frac{r_i}{2}] \mathrm{d}s_k \\
&\geq \int_{u_k^{\mathbb{H}}}^{\tilde{L}_k} \tilde{\gamma}_k \mathrm{d}s_k + \int_{u_{*_k}^{\mathbb{H}}}^{\tilde{L}_k} [(2\beta_k + 2)\pi - \theta_k] \mathrm{d}s_k \\
&= \tilde{\gamma}_k(\tilde{L}_k - u_k^{\mathbb{H}}) + \int_{u_{*_k}^{\mathbb{H}}}^{\tilde{L}_k} [(2\beta_k + 2)\pi - \theta_k] \mathrm{d}s_k.
\end{aligned}
\end{equation}
Thus, $F^{\mathbb{H}}(u^{\mathbb{H}})\to +\infty$ as $\|u^{\mathbb{H}}\|\to \infty$.

To show $F^{\mathbb{H}}$ has a critical point, we carry out the following discussion.
It is evident that when considering $u^{\mathbb{H}}\to 0$, we can assume, without loss of generality, that $u_l^{\mathbb{H}}\to 0$ and $u_i^{\mathbb{H}}\neq 0$ for $i\neq l$. Given that $r_l\to +\infty$ as $u_l^{\mathbb{H}}\to 0$, there exists a constant $\tilde{L}_l < 0$ such that for any $u_l^{\mathbb{H}}$ satisfying $\tilde{L}_l < u_l^{\mathbb{H}} < 0$, we have $K_l = 2\pi-\theta_{l}\geq\tilde{\gamma}_l > 0 > s_{\alpha}^{\mathbb{H}}\tanh^{\alpha}\frac{r_l}{2} > 2\pi \chi(M)$. Consequently, whenever $u_l^{\mathbb{H}} > \tilde{L}_l$, we obtain that
\begin{equation}
\begin{aligned}
&\int_{u_{*_l}^{\mathbb{H}}}^{u_l^{\mathbb{H}}} [K_l(u_{1}^{\mathbb{H}}, \ldots, u_{l-1}^{\mathbb{H}}, s_l, u_{*_{l+1}}^{\mathbb{H}}, \ldots,  u_{*_{N}}^{\mathbb{H}}) + 2\beta_l\pi-s_{\alpha}^{\mathbb{H}}\tanh^{\alpha}\frac{r_l}{2}] \mathrm{d}s_l \\
&\geq \int_{u_{*_l}^{\mathbb{H}}}^{\tilde{L}_l} [K_l(u_{1}^{\mathbb{H}}, \ldots, u_{l-1}^{\mathbb{H}}, s_l, u_{*_{l+1}}^{\mathbb{H}}, \ldots,  u_{*_{N}}^{\mathbb{H}}) + 2\beta_l\pi-s_{\alpha}^{\mathbb{H}}\tanh^{\alpha}\frac{r_l}{2}] \mathrm{d}s_l\\ &+ \int_{\tilde{L}_l}^{u_l^{\mathbb{H}}} [\tilde{\gamma}_l + 2\beta_l\pi-s_{\alpha}^{\mathbb{H}}\tanh^{\alpha}\frac{r_l}{2}] \mathrm{d}s_l \\
&> \int_{u_{*_l}^{\mathbb{H}}}^{\tilde{L}_l} [K_l(u_{1}^{\mathbb{H}}, \ldots, u_{l-1}^{\mathbb{H}}, s_l, u_{*_{l+1}}^{\mathbb{H}}, \ldots,  u_{*_{N}}^{\mathbb{H}}) + 2\beta_l\pi-s_{\alpha}^{\mathbb{H}}\tanh^{\alpha}\frac{r_l}{2}] \mathrm{d}s_l + [\tilde{\gamma}_l + 2\beta_l\pi](u_l^{\mathbb{H}}- \tilde{L}_l).
\end{aligned}
\end{equation}

Let $l = N$. Then we can show that there exists a $\delta_N < 0$ such that $f_N(u)=F^{\mathbb{H}}(u_1^{\mathbb{H}},\ldots,u_{N-1}^{\mathbb{H}},u)$ is a strictly increasing function on the interval $(\delta_N, 0)$. By symmetry, for \[f_l(u) = F^{\mathbb{H}}(u_1^{\mathbb{H}},\ldots,u_{l-1}^{\mathbb{H}},u,u_{l+1}^{\mathbb{H}},\ldots,u_N^{\mathbb{H}}),\] there also exists a $\delta_l < 0$ such that $f_l(u)$ is strictly increasing on the interval $(\delta_l, 0)$. Set $\delta=\max_{1\le i\le N}\delta_i$.

Since $F^{\mathbb{H}}\in C^2(\mathbb{R}_{<0}^N)$, so $F^{\mathbb{H}}$ can be uniquely extended to $\tilde{F}^{\mathbb{H}}$ which is defined on $\mathbb{R}_{\leq 0}^N$. Similar to the proof of Theorem \ref{thm critical point E}, it is easy to see that there exists a $Y > 1$ such that if $\tilde{F}^{\mathbb{H}}$ has a critical point, then it must lie within the ball $B(0, Y)\cap \mathbb{R}_{\leq 0}^N$. Since in $B(0,\mid\delta\mid)\cap \mathbb{R}_{< 0}^N$, $f_i$ is strictly increasing for $1\le i\le N$, there must be a critical point within $B(0, Y)\cap \mathbb{R}_{< 0}^N\subset \mathbb{R}_{<0}^N$. From Theorem \ref{thm convex H} and Lemma \ref{injective}, the uniqueness of the critical point is ensured by the strict convexity of $F^{\mathbb{H}}$ in $\mathbb{R}_{<0}^N$. Moreover, it is easy to see $F^{\mathbb{H}}$ is proper. Since $\frac{\partial F^{\mathbb{H}}}{\partial u_i^{\mathbb{H}}}=K_i+2\pi\beta_i-s_{\alpha}^{\mathbb{H}}\tanh^{\alpha}\frac{r_i}{2}$, a critical point of $F^{\mathbb{H}}$ is precisely a constant branched $\alpha$-metric in $\mathbb{H}^2$. Therefore, we complete the proof.
\end{proof}

\begin{theorem}\label{main thm}
For any initial metric $r(0)$, the solution $r(t)$ to \eqref{main flow E} converges exponentially fast to a constant $\alpha$-metric $r_H^{\mathbb{E}}$ in $\mathbb{E}^2$. Similarly, the solution $r(t)$ to \eqref{main flow H} converges exponentially fast to the constant $\alpha$-metric $r_H^
{\mathbb{H}}$ in $\mathbb{H}^2$.
\end{theorem}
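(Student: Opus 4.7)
The plan is to combine the three structural results established earlier in Sections \ref{section 4} and \ref{section 5}: the negative gradient flow identity from Proposition \ref{prop origin potential}, the strict convexity of the branched $\alpha$-potentials (Theorems \ref{thm convex E}, \ref{thm convex H}), and the properness together with the existence and uniqueness of a critical point (Theorems \ref{thm critical point E}, \ref{thm critical point H}). Long-time existence is already provided by Proposition \ref{long time existance}, so the whole question reduces to asymptotic behavior as $t\to+\infty$.

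For the Euclidean flow \eqref{main flow E}, I would first observe that summing the right-hand side over $i$ gives
\[
\sum_i s_\alpha^{\mathbb{E}} r_i^\alpha - \sum_i (K_i + 2\pi\beta_i) = 2\pi\chi(M) - 2\pi\chi(M) = 0
\]
by the branched Gauss--Bonnet identity $\sum_i(K_i + 2\pi\beta_i) = 2\pi\chi(M)$, so $\sum_i u_i^{\mathbb{E}}(t)$ is conserved along the flow. Via the scaling equivalence (Remark \ref{r_H}) this lets us work on the hyperplane $U$, where $F^{\mathbb{E}}|_U$ is strictly convex and proper with unique minimizer ${u_*}_H^{\mathbb{E}}$. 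Since $F^{\mathbb{E}}(u^{\mathbb{E}}(t))$ is nonincreasing along the negative gradient flow and bounded below, the trajectory stays in a compact sublevel set, so any limit point is a critical point and hence equals ${u_*}_H^{\mathbb{E}}$ by uniqueness. For the hyperbolic flow \eqref{main flow H} the argument is parallel and even simpler: the trajectory stays in a sublevel set of $F^{\mathbb{H}}$ that is compact inside the open domain $\mathbb{R}_{<0}^N$ (crucially, properness keeps trajectories away from the boundary $\{u_i^{\mathbb{H}}=0\}$, i.e., from $r_i=+\infty$), so it converges to the unique critical point $u_H^{\mathbb{H}}$.

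For the exponential rate, I would linearize at the critical point. Writing $v(t) = u(t) - u_H$, the flow reads
\[
\dot v = -\mathrm{Hess}(F)\big|_{u_H}\, v + O(\|v\|^2),
\]
and by the strict convexity results the Hessian $H:=\mathrm{Hess}(F)|_{u_H}$ is positive definite (on $U$ in the Euclidean case, on $\mathbb{R}_{<0}^N$ in the hyperbolic case) with smallest eigenvalue $\lambda>0$. A standard Lyapunov argument using $\|v\|^2$ then yields $\frac{d}{dt}\|v\|^2 \le -\lambda\|v\|^2$ once $u(t)$ lies in a sufficiently small neighborhood of $u_H$, giving $\|u(t)-u_H\|\le Ce^{-\lambda t/2}$. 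Because $u(t)$ has already been shown to converge to $u_H$, it enters any such neighborhood in finite time, and adjusting constants yields exponential decay for all $t\ge 0$; translating back from $u$-coordinates to $r$-coordinates preserves the exponential rate since the coordinate changes $u_i^{\mathbb{E}}=\ln r_i$ and $u_i^{\mathbb{H}}=\ln\tanh(r_i/2)$ are smooth diffeomorphisms near $u_H$.

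The main obstacle is securing compactness of the trajectory in the correct space: on $U$ in the Euclidean case (where passing to the scaling quotient is essential to kill the degeneracy of $\mathrm{Hess}(F^{\mathbb{E}})$ along $(1,\ldots,1)^T$) and inside the open domain $\mathbb{R}_{<0}^N$ in the hyperbolic case (where we must rule out the trajectory escaping to the boundary $u_i^{\mathbb{H}}\to 0$). Both are handled by the properness results in Theorems \ref{thm critical point E} and \ref{thm critical point H}; once compactness is in hand, strict convexity at the critical point delivers the uniform exponential rate.
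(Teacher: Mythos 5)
Your overall strategy --- the gradient structure from Proposition \ref{prop origin potential}, strict convexity (Theorems \ref{thm convex E}, \ref{thm convex H}), properness and uniqueness of the critical point (Theorems \ref{thm critical point E}, \ref{thm critical point H}), trajectory compactness, and then linearization at the critical point for the exponential rate --- is exactly the paper's route. Your hyperbolic argument and your Lyapunov step match the paper's proof in substance (the paper deduces ${p^{\mathbb{H}}}'(+\infty)=0$ from ${p^{\mathbb{H}}}'\le 0$, ${p^{\mathbb{H}}}''\ge 0$ and boundedness below, then observes that $D\Gamma^{\mathbb{H}}|_{u_H^{\mathbb{H}}}=-\mathrm{Hess}\,F^{\mathbb{H}}$ is Hurwitz and invokes the Lyapunov Stability Theorem; note also that ``any limit point is a critical point'' needs a short justification of this kind, not monotonicity of $F$ alone).

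There is, however, a genuine error in your Euclidean reduction. The identity you invoke as the branched Gauss--Bonnet, $\sum_i(K_i+2\pi\beta_i)=2\pi\chi(M)$, is false whenever branch points are present: the discrete Gauss--Bonnet identity in $\mathbb{E}^2$ is the purely combinatorial statement $\sum_i K_i=2\pi\chi(M)$ (from $\sum_i\theta_i=\pi|F|$ and $2|E|=3|F|$), which holds regardless of the branch data, so in fact $\sum_i(K_i+2\pi\beta_i)=2\pi\chi(M)+2\pi\sum_i\beta_i$. Consequently $\frac{\mathrm{d}}{\mathrm{d}t}\sum_i u_i^{\mathbb{E}}=-2\pi\sum_i\beta_i<0$ once the branch set is nonempty: $\sum_i u_i^{\mathbb{E}}$ is \emph{not} conserved, the trajectory drifts linearly in the $(1,\ldots,1)^T$ direction and never lies in $U$, and since $\|\nabla F^{\mathbb{E}}\|^2\ge\bigl(2\pi\sum_i\beta_i\bigr)^2/N$ along the flow, $F^{\mathbb{E}}(u^{\mathbb{E}}(t))$ decreases without bound --- so your compact-sublevel-set argument fails as stated for the unprojected trajectory. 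The repair is what the paper actually does: the right-hand side of \eqref{main flow E} is invariant under $u\mapsto u+c(1,\ldots,1)^T$ (the scaling equivalence of Theorem \ref{thm critical point E} and Remark \ref{r_H}), so one replaces $u^{\mathbb{E}}(t)$ by its mean-normalized projection onto $U$; because $\nabla\bigl(F^{\mathbb{E}}|_U\bigr)$ is the orthogonal projection of $\nabla F^{\mathbb{E}}$ onto $U$, the projected trajectory is precisely the negative gradient flow of $F^{\mathbb{E}}|_U$, and from that point your compactness, convexity, and linearization argument goes through on $U$ and recovers the paper's conclusion.
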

\begin{proof}
The existence and uniqueness of constant branched $\alpha$-metrics ${u_*}_H^{\mathbb{E}}$ in $U$ and $u_H^{\mathbb{H}}$ in $\mathbb{R}_{<0}^N$ are first established via Theorems \ref{thm critical point E} and \ref{thm critical point H}. We claim that $u^{\mathbb{E}}(t)$ lies in a compact subset of $U$ and $u^{\mathbb{H}}(t)$ lies in a compact subset of $\mathbb{R}_{<0}^N$. In the case of $\mathbb{H}^2$, define $p^{\mathbb{H}}(t) = F^{\mathbb{H}}(u^{\mathbb{H}}(t))$. By Theorem \ref{thm critical point H}, $F^{\mathbb{H}}$ is proper in $\mathbb{R}_{<0}^N$, which implies that $p^{\mathbb{H}}(t)$ is bounded below. We can derive that
\[
{p^{\mathbb{H}}}^{\prime}(t)=-\sum_{i}(K_{i}+2\pi\beta_i-s_{\alpha}^{\mathbb{H}}\tanh^{\alpha}\frac{r_i}{2})^{2}=-\|\nabla F^{\mathbb{H}}\|^{2}\leq0,
\]
and
\[
{p^{\mathbb{H}}}^{\prime\prime}(t)=2\left(K+\beta - s_{\alpha}^{\mathbb{H}}\tanh^{\alpha}\frac{r_i}{2}\right)^{T}\text{Hess}_{u^{\mathbb{H}}}F^{\mathbb{H}}\left(K+\beta - s_{\alpha}^{\mathbb{H}}\tanh^{\alpha}\frac{r_i}{2}\right)\geq0.
\]
Given that ${p^{\mathbb{H}}}^{\prime}\leq0$, ${p^{\mathbb{H}}}^{\prime\prime}\geq0$, and $p^{\mathbb{H}}$ is bounded from below, it follows that ${p^{\mathbb{H}}}^{\prime}(+\infty)=0$.
If $u^{\mathbb{H}}$ is not bounded in $\mathbb{R}_{<0}^N$, then by Theorem \ref{thm critical point H}, we get $\lim_{t\to +\infty}p^{\mathbb{H}}(t)=+\infty$, which is impossible.

Since $u_H^{\mathbb{H}}$ is the unique critical point of $F^{\mathbb{H}}$ by Theorem \ref{thm critical point H} and $\{u^{\mathbb{H}}(t)\}\subset\subset \mathbb{R}_{<0}^N$, combining with the fact that ${p^{\mathbb{H}}}^{\prime}(+\infty)=0$, we deduce that there exist a sequence $\xi_n\to +\infty$ such that  $u^{\mathbb{H}}(\xi_n)$ converges to $u_H^{\mathbb{H}}$. Let $\Gamma_i^{\mathbb{H}}$ be the right-hand side of the equation \eqref{main flow H}, and $\Gamma^{\mathbb{H}}$ denotes $(\Gamma_1^{\mathbb{H}},\ldots,\Gamma_N^{\mathbb{H}})^T$, then we have $D \Gamma^{\mathbb{H}}|_{u_H^{\mathbb{H}}}=-\text{Hess}_{u_H^{\mathbb{H}}}F^{\mathbb{H}}$. So $D\Gamma^{\mathbb{H}}|_{u_H^{\mathbb{H}}}$ is a Hurwitz matrix by Theorem \ref{thm convex H}, which implies $u_H^{\mathbb{H}}$ is a local attractor of \eqref{main flow H}. By the Lyapunov Stability Theorem, we conclude that $u^{\mathbb{H}}(t)$ converges exponentially fast to $u_H^{\mathbb{H}}$.

In the case of $\mathbb{E}^2$, By virtue of the scaling equivalence stated in the proof of Theorem \ref{thm critical point E} and Remark \ref{r_H}, we can replace \(u^{\mathbb{E}}\) with \(u^{\mathbb{E}}-\sum_{1\leq i\leq N}u_i^{\mathbb{E}}\). Define $p^{\mathbb{E}}(t) = F^{\mathbb{E}}|_U(u^{\mathbb{E}}(t))$, by similar arguments, we can also obtain that $u^{\mathbb{E}}(t)$ converges exponentially fast to $u_H^{\mathbb{E}}$.
\end{proof}

\section{Branched $\alpha$-Flows with Prescribed Curvatures}\label{sec 6}
\hspace{14pt}
In this section, we address the prescribed curvature problems by introducing the corresponding branched $\alpha$-curvatures and branched $\alpha$-flows, which generalize previous results by relaxing the precondition \(\chi(M) \leq 0\). Specifically, our framework applies to surfaces with arbitrary Euler characteristic \(\chi(M) \in \mathbb{Z}\).
\begin{definition}
    Suppose $(M,\mathcal{T},\Phi,P)$ is a branched weighted triangulated closed surface with a circle packing metric $r$, where $P$ is a circle packing with a branch structure \[br(P)=\{(b_1,\beta_1),(b_2,\beta_2),\ldots,(b_m,\beta_m)\}\subset V\times m,\] Given a function defined: $\overline{R}\in C(V)$,  a modified branched $\alpha$-flow with respect to $\overline{R}$ in $\mathbb{E}^2$ is defined to be
\begin{equation}\label{pre flow E}
\frac{\mathrm{d}u_i^{\mathbb{E}}}{\mathrm{d}t}=\overline{R}_ir_i^{\alpha}-(K_i+2\pi \beta_i).
\end{equation}
A modified branched $\alpha$-$\tanh$-flow with respect to $\overline{R}$ in $\mathbb{H}^2$ is defined to be
\begin{equation}\label{pre tanh flow H}
\frac{\mathrm{d}u_i^{\mathbb{H}}}{\mathrm{d}t}=\overline{R}_i\tanh^{\alpha}\frac{r_i}{2}-(K_i+2\pi \beta_i).
\end{equation}

Recalling the branched $\alpha$-curvatures \eqref{curvature E} and \eqref{curvature H}, we rewrite $B_{\alpha,i}^{\mathbb{H}}$ as $B_{\alpha,i,\tanh}^{\mathbb{H}}$ called the branched $\alpha$-$\tanh$-curvature in $\mathbb{H}^2$. $\overline{R}$ is called \emph{admissible} in $\mathbb{E}^2$ if there is a circle packing metric $\overline{r}^{\mathbb{E}}$ with the branched $\alpha$-curvature $\overline{R}$ in $\mathbb{E}^2$; $\overline{R}$ is called \emph{$\tanh$-admissible} in $\mathbb{H}^2$ if there is a circle packing metric $\overline{r}_{\tanh}^{\mathbb{H}}$ with the branched $\tanh$-$\alpha$-curvature $\overline{R}$ in $\mathbb{H}^2$.
\end{definition}

These flows (\ref{pre flow E}) (\ref{pre tanh flow H}) can be used to study the prescribed curvature problems. On one hand, if the solutions to the flows (\ref{pre flow E}) and (\ref{pre tanh flow H}) exist for all time and converge, then it is easy to see that $\overline{R}$ is admissible in $\mathbb{E}^2$ and $\tanh$-admissible in $\mathbb{H}^2$, respectively. On the other hand, we have the following theorem.

\begin{theorem}\label{thm pre}

Suppose $(M,\mathcal{T},\Phi,H)$ is a branched weighted triangulated surface, $\alpha$ is a non-negative real number and $\overline{R}\in C(V)$ is a function defined on $\mathcal{T}$. If $\overline{R}_i\leq0$ for all $i$, and $\{\alpha\overline{R}_i\}_{1\leq i\leq N}$ are not identically zero. Additionally, $\overline{R}$ is admissible and $\tanh$-admissible by metrics $\overline{r}^{\mathbb{E}}$ and $\overline{r}_{\tanh}^{\mathbb{H}}$, respectively. Then $\overline{r}^{\mathbb{E}}$ and $\overline{r}_{\tanh}^{\mathbb{H}}$ are unique metrics in $\mathbb{R}_{>0}^N$ such that $B_{\alpha,i}^{\mathbb{E}}(\overline{r}^{\mathbb{E}})=B_{\alpha,i,\tanh}^{\mathbb{H}}(\overline{r}_{\tanh}^{\mathbb{H}})=\overline{R}\). Moreover, the solutions to the corresponding flows (\ref{pre flow E}) (\ref{pre tanh flow H}) exist for all time and converge exponentially fast to these metrics.

\end{theorem}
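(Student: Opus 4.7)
The plan is to adapt the variational machinery of Theorem \ref{main thm} to the prescribed-curvature setting by replacing $s_\alpha^{\mathbb{E}}$ and $s_\alpha^{\mathbb{H}}$ by the vertex-wise constants $\overline{R}_i$. First I would introduce the modified branched $\alpha$-potentials
\[
\overline{F}^{\mathbb{E}}(u^{\mathbb{E}}) = \int_{u_*^{\mathbb{E}}}^{u^{\mathbb{E}}} \sum_{i=1}^{N} \bigl( (K_i + 2\pi\beta_i) - \overline{R}_i r_i^{\alpha} \bigr)\, \mathrm{d}u_i^{\mathbb{E}}, \qquad u^{\mathbb{E}} \in \mathbb{R}^N,
\]
together with the analogous $\overline{F}^{\mathbb{H}}$ on $\mathbb{R}^N_{<0}$ in which $\overline{R}_i r_i^\alpha$ is replaced by $\overline{R}_i \tanh^\alpha(r_i/2)$. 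Because each $\overline{R}_i$ is independent of $u$, the off-diagonal mixed partials of $\overline{R}_i r_i^\alpha$ vanish, so closedness of both integrands reduces to the symmetry of $J^{\mathbb{E}}$ and $J^{\mathbb{H}}$ provided by Lemmas \ref{Lemma 4} and \ref{Lemma 4.5}. Hence $\overline{F}^{\mathbb{E}}$ and $\overline{F}^{\mathbb{H}}$ are well-defined, and \eqref{pre flow E}, \eqref{pre tanh flow H} are precisely their negative gradient flows.

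Next I would compute the Hessians
\[
\mathrm{Hess}_{u^{\mathbb{E}}} \overline{F}^{\mathbb{E}} = J^{\mathbb{E}} - \alpha\, \mathrm{diag}(\overline{R}_i r_i^\alpha), \qquad \mathrm{Hess}_{u^{\mathbb{H}}} \overline{F}^{\mathbb{H}} = J^{\mathbb{H}} - \alpha\, \mathrm{diag}\!\bigl(\overline{R}_i \tanh^\alpha \tfrac{r_i}{2}\bigr),
\]
and establish strict convexity of both potentials. Since $\alpha \geq 0$ and $\overline{R}_i \leq 0$, the diagonal correction is positive semi-definite. In $\mathbb{H}^2$ this already suffices because $J^{\mathbb{H}}$ is positive definite by Lemma \ref{Lemma 4.5}. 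In $\mathbb{E}^2$ the kernel of $J^{\mathbb{E}}$ is spanned by $(1,\ldots,1)^T$, and on that direction the correction evaluates to $-\alpha \sum_i \overline{R}_i r_i^\alpha > 0$ by the hypothesis that $\{\alpha \overline{R}_i\}$ is not identically zero. Thus $\mathrm{Hess}\, \overline{F}^{\mathbb{E}}$ is positive definite on all of $\mathbb{R}^N$; the prescribed curvatures break the Euclidean scaling symmetry that forced the restriction to the hyperplane $U$ in Theorem \ref{thm convex E}.

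From strict convexity, Lemma \ref{injective} gives uniqueness of any critical point of each potential, which combined with the admissibility hypothesis establishes the uniqueness of $\overline{r}^{\mathbb{E}}$ and $\overline{r}_{\tanh}^{\mathbb{H}}$. For properness I would replay the coordinate-by-coordinate integration argument of Theorems \ref{thm critical point E} and \ref{thm critical point H}. When $u_k \to +\infty$ at an index with $\overline{R}_k < 0$, the integrand behaves like $-\overline{R}_k e^{\alpha u_k}$ and dominates; when $u_k \to -\infty$, we have $\overline{R}_k r_k^\alpha \to 0$ and the branch-structure bound $\theta_k - (2\beta_k+2)\pi \geq \gamma_k > 0$ used in Theorem \ref{thm critical point E} still forces growth; indices with $\overline{R}_k = 0$ are covered by $K_k \to 2\pi$ as $r_k \to \infty$ (Lemma \ref{Lemma eps}) together with the same branch-structure bound when $r_k \to 0$.

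Finally, long-time existence of both flows follows from the ODE-extension argument of Proposition \ref{long time existance}, once one notes that $\overline{R}$ is bounded on the finite vertex set so that for $r_i$ near $0$ the term $\overline{R}_i r_i^\alpha$ is harmless, while for $r_i$ large the sign of $\overline{R}_i \leq 0$ combined with $K_i \to 2\pi$ forces $r_i$ to decrease. The potentials $\overline{F}^{\mathbb{E}}$, $\overline{F}^{\mathbb{H}}$ are Lyapunov functions: they are monotone decreasing along the flow and bounded below by their minima, so by properness each trajectory stays in a compact sublevel set and accumulates at the unique critical point; positive definiteness of the Hessian at that point makes the linearized flow Hurwitz, so the Lyapunov Stability Theorem upgrades subsequential to exponential convergence, exactly as in the closing argument of Theorem \ref{main thm}. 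The main obstacle I expect is the properness estimate in $\mathbb{E}^2$: the coexistence of indices with $\overline{R}_i < 0$ (handled by scaling) and with $\overline{R}_i = 0$ (handled only by the branch-structure bound) must be juggled carefully so that every escaping direction in $\mathbb{R}^N$ is controlled.
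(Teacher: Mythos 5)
Your proposal is correct, and its skeleton coincides with the paper's proof: the same modified potentials $\overline{F}^{\mathbb{E}}$ and $\overline{F}_{\tanh}^{\mathbb{H}}$, the same Hessian formulas $J-\alpha\,\Sigma^{\frac{\alpha}{2}}\operatorname{diag}(\overline{R}_i)\Sigma^{\frac{\alpha}{2}}$ (resp.\ with $\Pi$), the same positive-definiteness argument (in $\mathbb{E}^2$ the diagonal correction is strictly positive on the kernel direction $(1,\ldots,1)^T$ of $J^{\mathbb{E}}$ because $\{\alpha\overline{R}_i\}$ is not identically zero; in $\mathbb{H}^2$ positive definiteness of $J^{\mathbb{H}}$ already suffices), uniqueness via Lemma \ref{injective}, and the Lyapunov-decay-plus-Hurwitz-linearization closing argument of Theorem \ref{main thm}. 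The one genuine divergence is the properness step. The paper does \emph{not} replay the coordinate-by-coordinate asymptotic estimates of Theorems \ref{thm critical point E} and \ref{thm critical point H}; it invokes Lemma \ref{critical point to infty}, which says that a $C^2$ function with everywhere positive definite Hessian that possesses a critical point is automatically proper with $F\to+\infty$ at infinity. Since admissibility hands you a critical point for free, this one-line application replaces exactly the part you flag as the main obstacle. Your hands-on route can be made to work, but note that it leans on admissibility anyway in a hidden place: along the diagonal escape $u^{\mathbb{E}}=t(1,\ldots,1)^T$, $t\to-\infty$, the Euclidean curvatures are scale-invariant and the terms $\overline{R}_ir_i^{\alpha}$ vanish, so linear growth of the potential requires $\sum_i(K_i+2\pi\beta_i)=2\pi(\chi(M)+\sum_i\beta_i)<0$, which is not a hypothesis of the theorem but follows from the existence of $\overline{r}^{\mathbb{E}}$ via $2\pi(\chi(M)+\sum_i\beta_i)=\sum_i\overline{R}_i(\overline{r}_i^{\mathbb{E}})^{\alpha}<0$. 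So the paper's lemma-based argument is both shorter and structurally cleaner, while yours is more self-contained and makes the escape-direction geometry explicit. One further small point in your favor: you define $\overline{F}^{\mathbb{E}}$ on all of $\mathbb{R}^N$, whereas the paper writes $u^{\mathbb{E}}\in U$ in \eqref{pre potential E}; since the modified flow \eqref{pre flow E} does not preserve the hyperplane $U$ (the prescribed curvatures break the scaling symmetry, as you observe), your choice of domain is the consistent one.
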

\begin{proof}
For $\overline{R}\in C(V)$, we can introduce the following modified branched $\alpha$-potentials, namely the modified $\alpha$-potential
\begin{equation}\label{pre potential E}
    \overline{F}^{\mathbb{E}}(u^{\mathbb{E}})=\int_{u_*^{\mathbb{E}}}^{u^{\mathbb{E}}}\sum_{i = 1}^N (K_i+2\pi\beta_i-\overline{R}_i r_i^{\alpha})\mathrm{d}u_i^{\mathbb{E}},u^{\mathbb{E}}\in U
\end{equation} in $\mathbb{E}^2$, and the modified $\alpha$-$\tanh$-potential
\begin{equation}\label{pre tanh-potential H}
    \overline{F}_{\tanh}^{\mathbb{H}}(u^{\mathbb{H}})=\int_{u_*^{\mathbb{H}}}^{u^{\mathbb{H}}}\sum_{i = 1}^N (K_i+2\pi\beta_i-\overline{R}_i\tanh^{\alpha}\frac{r_i}{2})\mathrm{d}u_i^{\mathbb{H}},u^{\mathbb{H}}\in\mathbb{R}_{<0}^N
\end{equation} in $\mathbb{H}^2$.

It is easy to check that modified branched $\alpha$-potentials
(\ref{pre potential E}) and (\ref{pre tanh-potential H}) are both well-defined. Furthermore, by direct calculation, we have
\[\begin{aligned}
\text{Hess}_{u^{\mathbb{E}}}\overline{F}^{\mathbb{E}}=J^{\mathbb{E}}-\Sigma^{\frac{\alpha}{2}}\begin{pmatrix}
\alpha\overline{R}_1 & & \\
& \ddots & \\
& & \alpha\overline{R}_N
\end{pmatrix}\Sigma^{\frac{\alpha}{2}},\end{aligned}
\] and
\[\begin{aligned}
\text{Hess}_{u^{\mathbb{H}}}\overline{F}^{\mathbb{H}}_{\tanh}=J^{\mathbb{H}}-\Pi^{\frac{\alpha}{2}}\begin{pmatrix}
\alpha\overline{R}_1 & & \\
& \ddots & \\
& & \alpha\overline{R}_N
\end{pmatrix}\Pi^{\frac{\alpha}{2}}.
\end{aligned}\]

Since $\alpha\overline{R}_i\leq0$ for $i = 1,\ldots,N$ and not identically zero, $\text{Hess}_{u^{\mathbb{E}}}\overline{F}^{\mathbb{E}}
$ and $\text{Hess}_{u^{\mathbb{H}}}\overline{F}_{\tanh}^{\mathbb{H}}
$ are both positive definite. The uniqueness of the critical points of $\overline{F}^{\mathbb{E}}$ and $\overline{F}_{\tanh}^{\mathbb{H}}$ follows by Proposition \ref{injective}. Consequently, $\overline{r}^{\mathbb{E}}$ and $\overline{r}_{\tanh}^{\mathbb{H}}$ are unique zero points of $\nabla_{u^{\mathbb{E}}}\overline{F}^{\mathbb{E}}$ and $\nabla_{u^{\mathbb{H}}}\overline{F}_{\tanh}^{\mathbb{H}}$ respectively, which implies that $\overline{r}^{\mathbb{E}}$ and $\overline{r}_{\tanh}^{\mathbb{H}}$ are unique circle packing metrics in $\mathbb{R}_{>0}^N$ such that their $\alpha$-curvature in $\mathbb{E}^2$, and branched $\alpha$-$\tanh$-curvature in $\mathbb{H}^2$ is $\overline{R}$, respectively. Also, by Lemma \ref{critical point to infty}, we know that $\overline{F}^{\mathbb{E}}$ and $\overline{F}_{\tanh}^{\mathbb{H}}$ are both proper and $\lim_{\|u^{\mathbb{E}}\|\to+\infty}\overline{F}^{\mathbb{E}}(u^{\mathbb{E}})=\lim_{\|u^{\mathbb{H}}\|\to+\infty}\overline{F}_{\tanh}^{\mathbb{H}}(u^{\mathbb{H}})=+\infty$. Furthermore, we have $$\frac{\mathrm{d}\overline{F}^{\mathbb{E}}(u^{\mathbb{E}}(t))}{\mathrm{d} t}=-\sum_{i = 1}^N (K_i+2\pi\beta_i-\overline{R}_i r_i^{\alpha})^2\leq 0,$$ $$\frac{\mathrm{d}\overline{F}_{\tanh}^{\mathbb{H}}(u^{\mathbb{H}}(t))}{\mathrm{d} t}=-\sum_{i = 1}^N (K_i+2\pi\beta_i-\overline{R}_i\tanh^{\alpha}\frac{r_i}{2})^2\leq 0.$$
 From the properties mentioned above, the solutions to \eqref{pre flow E} and \eqref{pre tanh flow H} are precompact in $U$ and $\mathbb{R}_{<0}^N$, respectively. Thus we conclude that for each branched $\alpha$-flow, $r(t)$ is bounded below and away from $0$ in both $\mathbb{E}^2$ and $\mathbb{H}^2$. The proof of the existence of the upper bound of $r(t)$ is nearly identical to the arguments of Proposition \ref{long time existance}. Thus, we deduce the long-time existence of the solutions to \eqref{pre flow E} and \eqref{pre tanh flow H}. The remaining proof is similar to that of Theorem \ref{main thm}, which we also omit here.

\end{proof}

The lemma invoked in the preceding theorem asserts:
\begin{lemma}\label{critical point to infty}
   Given a function $F\in C^2(\mathbb{R}^N)$ or $F\in C^2(\mathbb{R}_{<0}^N)$  with $\text{Hess}(F)>0$. If $F$ has a critical point, then $\lim_{\|x\|\to\infty}F(x)=+\infty$. Moreover, $F$ is proper.
\end{lemma}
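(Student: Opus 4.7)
The plan is to leverage strict convexity together with a compactness argument on the unit sphere to turn the critical point $x_0$ into a global minimum from which $F$ grows at least linearly in every direction. The first observation is that for any unit vector $v$, the one-variable restriction $\phi_v(t):=F(x_0+tv)$ is $C^2$ and strictly convex (by $\mathrm{Hess}\,F>0$) and satisfies $\phi_v'(0)=\nabla F(x_0)\cdot v=0$; hence $\phi_v'$ is strictly increasing and $\phi_v'(t)>0$ for every $t>0$ within the portion of the ray lying in $\Omega$, so $x_0$ is in particular the unique global minimum of $F$.

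The next step is to upgrade this directional monotonicity to a uniform lower bound. Fix $t_0>0$ small enough that $\overline{B}(x_0,t_0)\subset\Omega$ (any $t_0>0$ works for $\Omega=\mathbb{R}^N$; for $\Omega=\mathbb{R}^N_{<0}$ choose $0<t_0<\min_i|x_{0,i}|$). The map $v\mapsto\phi_v'(t_0)=\nabla F(x_0+t_0v)\cdot v$ is continuous and, by the first step, strictly positive on the compact unit sphere, so it attains a strictly positive minimum $c:=\min_{\|v\|=1}\phi_v'(t_0)>0$. Then for arbitrary $x\in\Omega$ with $R:=\|x-x_0\|>t_0$ and $v:=(x-x_0)/R$, convexity of $\Omega$ places the entire segment $[x_0,x]$ inside $\Omega$, so $\phi_v$ is strictly convex on $[0,R]$ and $\phi_v'\ge c$ on $[t_0,R]$. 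Combining this with $\phi_v(t_0)\ge F(x_0)$ yields
\[
F(x)=\phi_v(R)\ge \phi_v(t_0)+c(R-t_0)\ge F(x_0)+c(R-t_0),
\]
which tends to $+\infty$ as $R\to\infty$. This is exactly $\lim_{\|x\|\to\infty}F(x)=+\infty$, and properness follows immediately, since every sublevel set $\{F\le M\}$ is bounded by the estimate above and closed in $\Omega$ by continuity of $F$.

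The only real technical subtlety lies in the hyperbolic case $\Omega=\mathbb{R}^N_{<0}$: a ray from $x_0$ in direction $v$ may exit $\Omega$ long before reaching a prescribed far point $x$. The remedy is that only the \emph{segment} $[x_0,x]$ (not the full ray) needs to remain in $\Omega$, and this is guaranteed by convexity of $\mathbb{R}^N_{<0}$; choosing $t_0<\min_i|x_{0,i}|$ simultaneously ensures that the entire sphere of radius $t_0$ about $x_0$ sits inside $\Omega$, so the minimum $c$ is genuinely taken over all unit directions. With this bookkeeping in place, the two cases $\Omega=\mathbb{R}^N$ and $\Omega=\mathbb{R}^N_{<0}$ are handled uniformly by the same compactness-plus-convexity argument.
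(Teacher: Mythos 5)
Your proposal is correct and follows essentially the same route as the paper's proof: restrict $F$ to rays from the critical point, use $\mathrm{Hess}\,F>0$ to get $\phi_v'(t_0)>0$ at a fixed small radius, take a uniform positive minimum over the compact unit sphere, and deduce linear growth $F(x)\ge F(x_0)+c(\|x-x_0\|-t_0)$ (the paper normalizes the critical point to $(-1,\ldots,-1)$ and uses $t_0=\tfrac12$). If anything, your treatment of the case $\Omega=\mathbb{R}^N_{<0}$ --- arguing via the segment $[x_0,x]$ inside the convex domain and choosing $t_0<\min_i|x_{0,i}|$ so that all unit directions are available --- is cleaner than the paper's remark that one should restrict $\omega$ to $\mathbb{R}^N_{\le 0}\cap\mathbb{S}^{N-1}$, which as literally stated would not reach every far point of the domain.
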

\begin{proof}
We first prove the condition $F\in C^2(\mathbb{R}^N)$. Without loss of generality, we can assume $\nabla F((-1,\cdots,-1))=0$. For any $\omega\in\mathbb{S}^{N - 1}$, we consider the ray $(-1,\cdots,-1)+\{t\omega\}_{0 \le t < \infty}\subset\mathbb{R}^N$. Let $f_{\omega}(t)=F((-1,\cdots,-1)+t\omega)$, then $f_{\omega}'(0)=0$ and $f_{\omega}''(t)=\omega^T\text{Hess}F\omega>0$, then, as in the case of dimension $N = 1$, there exists constant $a_{\omega}=f_{\omega}'(\frac{1}{2})=\nabla F((-1,\cdots,-1)+\frac{1}{2}\omega)\cdot\omega>0$ such that
\[F((-1,\cdots,-1)+t\omega)=f_{\omega}(t)\geq a_{\omega}t - \frac{1}{2}a_{\omega}+F((-1,\cdots,-1)+\frac{\omega}{2}),\quad\text{for }t \geq \frac{1}{2}.\]

By the arguments above, we have proved $\nabla F((-1,\cdots,-1)+\frac{1}{2}\omega)\cdot\omega>0$ for any $\omega\in\mathbb{S}^{N - 1}$, which implies
\[D:=\inf_{\omega\in\mathbb{S}^{N - 1}}\nabla F((-1,\cdots,-1)+\frac{1}{2}\omega)\cdot\omega>0\]
by the compactness of $\mathbb{S}^{N - 1}$ and that $F$ is $C^2$. Let $B:=\min_{\omega\in\mathbb{S}^{N - 1}}F((-1,\cdots,-1)+\frac{\omega}{2})$, then we have
\[F(x)\geq D\|x-(-1,\cdots,-1)\| - \frac{1}{2}D + B,\quad\text{for }\|x-(-1,\cdots,-1)\|\geq\frac{1}{2}.\]

It implies $\lim_{\|x\|\to\infty}F(x)=+\infty$ and $F$ is proper. The proof for the case $F \in C^2(\mathbb{R}_{<0}^N)$ is similar and only requires restricting $\omega$ to $\mathbb{R}_{\leq 0}^N \cap \mathbb{S}^{N-1}$, so we omit it here.
\end{proof}

\begin{definition}
   A branched combinatorial Gaussian curvature associated with the area element $A_i$ is defined as
\begin{equation}
    R_i=\frac{K_i+2\pi\beta_i}{A_i}.
\end{equation}
In the following, we call it branched $A$-curvature for short.
\end{definition}
\begin{definition}
A combinatorial Ricci flow with respect to area element $A_i$ is defined as

\begin{equation}\label{A flow}
\frac{\mathrm{d}u_i}{\mathrm{d}t}=\gamma_i\cdot(\overline{R}_i - R_i),
\end{equation}where $\gamma_i > 0$ is a smooth function defined on $\mathbb{R}_{>0}^N$, $\overline{R}$ is a prescribed function. $u_i=\operatorname{ln} r_i$ in $\mathbb{E}^2$ and $u_i = \ln\tanh\frac{r_i}{2}$ in $\mathbb{H}^2$ respectively. In the following, we call it branched $A$-curvature flow or branched $A$-flow for short.\end{definition} Let
\begin{equation}
    A_i^{\mathbb{E}} = \pi r_i^{\alpha}
\end{equation} in $\mathbb{E}^2$ and
\begin{equation}
    A_i^{\mathbb{H}} = 4\pi\sinh^{\alpha}\frac{r_i}{2}
\end{equation}
in $\mathbb{H}^2$ respectively. In fact, when $\alpha=2$, then $4\pi\sinh^2\frac{r_i}{2}$ and $\pi r_i^2$  are precisely the areas of the hyperbolic and Euclidean disks with radius $r_i$ packed at the vertex $v_i$, respectively.

Define a branched $A$-curvature in $\mathbb{E}^2$ as
\begin{equation}
    R_{H,A,i}^{\mathbb{E}}=\frac{K_i+2\pi\beta_i}{A_i^{\mathbb{E}}},
\end{equation} and a branched $A$-curvature in $\mathbb{H}^2$ as
\begin{equation}
    R_{H,A,i}^{\mathbb{H}}=\frac{K_i+2\pi\beta_i}{A_i^{\mathbb{H}}}.
\end{equation}
Then the corresponding branched $\alpha$-flows are defined to be
\begin{equation}\label{area flow E}
\frac{\mathrm{d}u_i^{\mathbb{E}}}{\mathrm{d}t}=\overline{R}_{i}-R_{H,A,i}^{\mathbb{E}},
\end{equation} in $\mathbb{E}^2$ and
\begin{equation}\label{area flow H}
\frac{\mathrm{d}u_i^{\mathbb{H}}}{\mathrm{d}t}=\overline{R}_{i}-R_{H,A,i}^{\mathbb{H}}.
\end{equation} in $\mathbb{H}^2$, respectively.

We define \(\overline{R}\) as \emph{$A$-admissible} in \(\mathbb{E}^2\) if there exists a circle packing metric \(\overline{r}^{\mathbb{E}}\) such that its branched $A$-curvature in \(\mathbb{E}^2\) equals \(\overline{R}\). Analogously, \(\overline{R}\) is called \emph{A-admissible} in \(\mathbb{H}^2\) when there exists a circle packing metric \(\overline{r}^{\mathbb{H}}\) with its branched $A$-curvature in \(\mathbb{H}^2\) equal to \(\overline{R}\).
\begin{theorem}\label{thm area}
    Suppose $A_i^{\mathbb{E}}=\pi r_i^{\alpha}$ in $\mathbb{E}^2$ and $A_i^{\mathbb{H}} = 4\pi\sinh^{\alpha}\frac{r_i}{2}$ in $\mathbb{H}^2$ respectively, where $\alpha$ is a non-negative real number. Suppose \(\overline{R}\in C(V)\) is a function with \(\overline{R}_i \leq 0\) for all \(i\in V\), and $\{\alpha\overline{R}_i\}_{1\leq i\leq N}$ are not identically zero. If $\overline{R}$ is $A$-admissible in $\mathbb{E}^2$ with $R_{H,A,i}^{\mathbb{E}}(\overline{r}^\mathbb{{E}}) = \overline{R}$ and similarly $A$-admissible in $\mathbb{H}^2$ with $R_{H,A,i}^{\mathbb{H}}(\overline{r}^\mathbb{{H}}) = \overline{R}$, then $\overline{r}^\mathbb{{E}}$ and $\overline{r}^\mathbb{{H}}$ are the unique circle packing metrics with branched $A$-curvature $\overline{R}$ in $\mathbb{E}^2$ and $\mathbb{H}^2$, respectively. Furthermore, $\overline{R}$ is the branched $A$-curvature of $\overline{r}^{\mathbb{E}}$, and similarly of $\overline{r}^{\mathbb{H}}$ if and only if the solutions to \eqref{area flow E} and \eqref{area flow H} exist for all time and converge exponentially fast to $\overline{r}^{\mathbb{E}}$ and $\overline{r}^{\mathbb{H}}$ in $\mathbb{E}^2$ and $\mathbb{H}^2$, respectively.
\end{theorem}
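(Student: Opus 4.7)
The strategy is to adapt the Lyapunov/convex-potential framework of Theorem \ref{thm pre} to the two area flows \eqref{area flow E} and \eqref{area flow H}. I would introduce the modified $A$-potentials
\[
\widetilde{F}^{\mathbb{E}}(u^{\mathbb{E}}) = \int_{u_*^{\mathbb{E}}}^{u^{\mathbb{E}}}\sum_{i=1}^{N}\bigl(K_i+2\pi\beta_i-\pi\overline{R}_i r_i^\alpha\bigr)\,du_i^{\mathbb{E}},
\]
\[
\widetilde{F}^{\mathbb{H}}(u^{\mathbb{H}}) = \int_{u_*^{\mathbb{H}}}^{u^{\mathbb{H}}}\sum_{i=1}^{N}\bigl(K_i+2\pi\beta_i-4\pi\overline{R}_i\sinh^{\alpha}\tfrac{r_i}{2}\bigr)\,du_i^{\mathbb{H}}.
\]
The point to emphasize is that, because each $\overline{R}_i$ is a fixed constant at $v_i$ and does not depend on $r_j$ for $j\neq i$, the asymmetry argument of Remark \ref{reason not sinh} does not apply: the offending cross partials vanish identically, so by Lemmas \ref{Lemma 4} and \ref{Lemma 4.5} the integrand $1$-forms are closed and the potentials are well-defined on $\mathbb{R}^N$ and $\mathbb{R}_{<0}^N$ respectively.

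A direct computation yields
\[
\text{Hess}\,\widetilde{F}^{\mathbb{E}} = J^{\mathbb{E}} - \pi\alpha\operatorname{diag}\{\overline{R}_i r_i^\alpha\}, \qquad \text{Hess}\,\widetilde{F}^{\mathbb{H}} = J^{\mathbb{H}} - 4\pi\alpha\operatorname{diag}\bigl\{\overline{R}_i\sinh^{\alpha}\tfrac{r_i}{2}\cosh^{2}\tfrac{r_i}{2}\bigr\}.
\]
The correction diagonals are non-negative (since $\overline{R}_i\le 0$, $\alpha\ge 0$) and not identically zero by hypothesis. In $\mathbb{H}^2$, $J^{\mathbb{H}}$ is already positive definite by Lemma \ref{Lemma 4.5}; in $\mathbb{E}^2$, the non-zero diagonal contribution is strictly positive along the kernel direction $(1,\ldots,1)^T$ of $J^{\mathbb{E}}$, so both Hessians end up positive definite and the potentials are strictly convex. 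The assumed $A$-admissibility provides a critical point for each potential; Lemma \ref{injective} upgrades this to uniqueness of $\overline{r}^{\mathbb{E}}$ and $\overline{r}^{\mathbb{H}}$ in $\mathbb{R}_{>0}^N$, while Lemma \ref{critical point to infty} yields properness and blow-up at infinity on the respective domains.

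The convergence assertion splits into a trivial direction (any convergent solution of \eqref{area flow E} or \eqref{area flow H} must satisfy $\overline{R}_i=R_{H,A,i}$ at its limit) and a substantive direction. For the latter, the identity
\[
\tfrac{d}{dt}\widetilde{F}^{\mathbb{E}}(u^{\mathbb{E}}(t)) = -\sum_{i}\tfrac{1}{\pi r_i^\alpha}\bigl(K_i+2\pi\beta_i-\pi\overline{R}_i r_i^\alpha\bigr)^2 \le 0,
\]
and the parallel estimate in $\mathbb{H}^2$, exhibit the potentials as Lyapunov functions. Combined with properness this forces each orbit into a compact sublevel set, which together with the estimates of Proposition \ref{long time existance} rules out finite-time blow-up and gives long-time existence. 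Finally, linearizing the flow at the unique critical point $\overline{u}$ produces $DG|_{\overline{u}}=-D_0\,\text{Hess}\,\widetilde{F}|_{\overline{u}}$ for a positive diagonal $D_0$ (with entries $(\pi r_i^\alpha)^{-1}$ or $(4\pi\sinh^{\alpha}(r_i/2))^{-1}$); this product is similar to $-D_0^{1/2}\,\text{Hess}\,\widetilde{F}\,D_0^{1/2}$, hence Hurwitz, and the Lyapunov stability theorem delivers the exponential convergence, exactly as in Theorem \ref{main thm}. The main obstacle, in my view, is the closedness verification for the $\sinh^\alpha$-potential in light of Remark \ref{reason not sinh}; once one observes that $\partial_{u_j^{\mathbb{H}}}(\overline{R}_i\sinh^\alpha(r_i/2))=0$ for $i\neq j$ (in contrast to the constant-curvature case where $s_\alpha^{\mathbb{H}}$ couples all radii), the remainder of the proof follows the established pattern.
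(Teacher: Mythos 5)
Your proposal is correct and follows essentially the same route as the paper's proof: the same $A$-potentials $\int \sum_i (K_i+2\pi\beta_i-\overline{R}_i A_i)\,du_i$, the same Hessian formulas $J-\pi\alpha\operatorname{diag}\{\overline{R}_i r_i^\alpha\}$ and $J^{\mathbb{H}}-4\pi\alpha\operatorname{diag}\{\overline{R}_i\sinh^\alpha\frac{r_i}{2}\cosh^2\frac{r_i}{2}\}$ with positivity from $\alpha\overline{R}_i\le 0$ not identically zero, uniqueness via Lemma \ref{injective}, properness via Lemma \ref{critical point to infty}, the Lyapunov monotonicity and compactness argument for long-time existence, and the same Hurwitz linearization $-D_0\,\mathrm{Hess}\,F_A$ (similar to a symmetric negative-definite matrix) for exponential convergence. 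Your explicit verification of closedness (the cross partials vanish because $\overline{R}_i$ is a constant, unlike the $s_\alpha^{\mathbb{H}}$-coupling in Remark \ref{reason not sinh}) is a point the paper only asserts as ``easy to check,'' and your constants are in fact the correct ones where the paper's displayed Hessian carries a harmless factor typo.
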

\begin{proof}
To prove the uniqueness of $\overline{r}^{\mathbb{E}}$ and $\overline{r}^{\mathbb{H}}$ with the branched $A$-curvature $\overline{R}$, we introduce the following branched combinatorial Ricci $A$-potentials (branched $A$-potentials for short).
\begin{equation}
F_A^{\mathbb{E}}(u^{\mathbb{E}})=\int_{\overline{u}^{\mathbb{E}}}^{u^{\mathbb{E}}}\sum_{i = 1}^{N}(K_i+2\pi\beta_i - \overline{R}_iA_i^{\mathbb{E}})\mathrm{d}u_i^{\mathbb{E}}
\end{equation} in $\mathbb{E}^2$, and
\begin{equation}\label{area potential H}
F_A^{\mathbb{H}}(u^{\mathbb{H}})=\int_{\overline{u}^{\mathbb{H}}}^{u^{\mathbb{H}}}\sum_{i = 1}^{N}(K_i+2\pi\beta_i - \overline{R}_iA_i^{\mathbb{H}})\mathrm{d}u_i^{\mathbb{H}}
\end{equation} in $\mathbb{H}^2$.

It is easy to check that the functionals $F_A^{\mathbb{E}}$ and $F_A^{\mathbb{H}}$ are both well-defined. By direct calculations, we have
\begin{equation}\begin{aligned}
    \text{Hess}_{u^\mathbb{E}}F_A^{\mathbb{E}}= J^{\mathbb{E}}-2\pi\alpha\begin{pmatrix}
\overline{R}_1 r_1^{\alpha} & & \\
& \ddots & \\
& & \overline{R}_N r_N^{\alpha}
\end{pmatrix}\end{aligned}
\end{equation} in $\mathbb{E}^2$ and
\begin{equation}\begin{aligned}
    \text{Hess}_{u^\mathbb{H}}F_A^{\mathbb{H}} = J^{\mathbb{H}}-4\pi\alpha\begin{pmatrix}
\overline{R}_1 \sinh^{\alpha}(\frac{r_1}{2})\cosh^2 (\frac{r_1}{2}) & & \\
& \ddots & \\
& & \overline{R}_N \sinh^{\alpha}(\frac{r_N}{2})\cosh^2 (\frac{r_N}{2})
\end{pmatrix}\end{aligned}
\end{equation} in $\mathbb{H}^2$,
which are both positive definite by $\alpha\overline{R}_i \leq 0$ and do not contain the identically zero cases. Note that $r$ is a circle packing metric with the branched $A$-curvature $\overline{R}$ in $\mathbb{E}^2$ if and only if $\nabla_{u^{\mathbb{E}}}F_A(r) = 0$. The analogous condition holds for $\mathbb{H}^2$ as well. Thus, the uniqueness of metrics with the curvature $\overline{R}$ follows from Lemma \ref{injective}.

For the “only if” part of the equivalence between the $A$-admissibility of $\overline{R}$ and the convergence of the branched $A$-flow (\ref{area flow E}) and (\ref{area flow H}), suppose the solution $r(t)\to\overline{r}^{\mathbb{E}}$ in $\mathbb{E}^2$ and $r(t)\to\overline{r}^{\mathbb{H}}$ in $\mathbb{H}^2$ as \(t\to+\infty\). In the case of Euclidean background geometry, we have $u^{\mathbb{E}}(t)\to\overline{u}^{\mathbb{E}}\), which implies that there exists a sequence $\xi_n\to+\infty$ such that
\[
u_i^{\mathbb{E}}(n + 1)-u_i^{\mathbb{E}}(n)={u_i^{\mathbb{E}}}^\prime(\xi_n)\to0
\]
as $n\to+\infty$. As $r(t)\to\overline{r}^{\mathbb{E}}$, we have $ R_{H,A,i}^{\mathbb{E}}(r(t))\to R_{H,A,i}^{\mathbb{E}}(\overline{r}^{\mathbb{E}})$. Thus, $ R_{H,A,i}^{\mathbb{E}}(\overline{r}^{\mathbb{E}})=\overline{R}_i$, that is, $\overline{r}^{\mathbb{E}}$ is a circle packing metric with the curvature $\overline{R}$. The proof for the case of hyperbolic background geometry is the same as above.

For the "if" part, we first prove that the solution of \eqref{area flow E} and \eqref{area flow H} stays in a compact set of $\mathbb{R}_{>0}^N$.
It is straightforward to verify that $F_A^{\mathbb{E}}(u^{\mathbb{E}}) \geq F_A^{\mathbb{E}}(\overline{u}^{\mathbb{E}}) = 0$ and $F_A^{\mathbb{H}}(u^{\mathbb{H}}) \geq F_A^{\mathbb{H}}(\overline{u}^{\mathbb{H}}) = 0$. Furthermore, $\overline{u}^{\mathbb{E}}$ is the unique minimum and critical point of $F_A^{\mathbb{E}}$, while $\overline{u}^{\mathbb{H}}$ serves as the unique minimum and critical point of $F_A^{\mathbb{H}}$. Moreover, $\lim_{\|u^{\mathbb{E}}\|\to+\infty}F_A^{\mathbb{E}}(u^{\mathbb{E}})=\lim_{\|u^{\mathbb{H}}\|\to+\infty}F_A^{\mathbb{H}}(u^{\mathbb{H}})=+\infty$
 by Lemma \ref{critical point to infty}. By direct calculations, we have
\begin{equation}
 \begin{aligned}
\frac{\mathrm{d}F_A^{\mathbb{E}}(u^{\mathbb{E}}(t))}{\mathrm{d}t}=\nabla F_A^{\mathbb{E}}\cdot\frac{\mathrm{d}u^{\mathbb{E}}}{\mathrm{d}t}&=\sum_{i}\left(K_i+2\pi\beta_i - 2\pi\overline{R}_ir_i^{\alpha}\right)\cdot(\overline{R}_i - R_{H,A,i}^{\mathbb{E}})\\
&=- 2\pi\sum_{i}r_i^{\alpha}(R_{H,A,i}^{\mathbb{E}} - \overline{R}_i)^2\leq0,
\end{aligned}
\end{equation}
 and
 \begin{equation}
  \begin{aligned}
\frac{\mathrm{d}F_A^{\mathbb{H}}(u^{\mathbb{H}}(t))}{\mathrm{d}t}=\nabla F_A^{\mathbb{H}}\cdot\frac{\mathrm{d}u^{\mathbb{H}}}{\mathrm{d}t}&=\sum_{i}\left(K_i+2\pi\beta_i - 4\pi\overline{R}_i\sinh^{\alpha}\frac{r_i}{2}\right)\cdot(\overline{R}_i - R_{H,A,i}^{\mathbb{H}})\\
&=- 4\pi\sum_{i}\sinh^{\alpha}\frac{r_i}{2}(R_{H,A,i}^{\mathbb{H}} - \overline{R}_i)^2\leq0.
\end{aligned}
\end{equation}
Then $F_A^{\mathbb{E}}$ and $F_A^{\mathbb{H}}$ are both decreasing. Using  $\lim_{\|u^{\mathbb{E}}\|\to+\infty}F_A^{\mathbb{E}}(u^{\mathbb{E}})=\lim_{\|u^{\mathbb{H}}\|\to+\infty}F_A^{\mathbb{H}}(u^{\mathbb{H}})=+\infty$ , we know $\{u^{\mathbb{E}}(t)\}$ and $\{u^{\mathbb{H}}(t)\}$ are both bounded, thus $r(t)$ has a positive lower bound in both $\mathbb{E}^2$ and $\mathbb{H}^2$. $r(t)$ is also uniformly bounded from above in both $\mathbb{E}^2$ and $\mathbb{H}^2$ by nearly identical proof of proposition \ref{long time existance}. Therefore we obtain the long time existence of the solution of \eqref{area flow E} and \eqref{area flow H}.

Set $\overline{\Gamma}^{\mathbb{E}}(u^{\mathbb{E}})=\overline{R}-R_{H,A}^{\mathbb{E}}$ in $\mathbb{E}^2$ and $\overline{\Gamma}^{\mathbb{H}}(u^{\mathbb{H}}) = \overline{R}-R_{H,A}^{\mathbb{H}}$ respectively, here $R_{H,A}^{\mathbb{E}}$ denotes $(R_{H,A,1}^{\mathbb{E}},\ldots,R_{H,A,N}^{\mathbb{E}})^T$ and $R_{H,A}^{\mathbb{H}}$ denotes $(R_{H,A,1}^{\mathbb{H}},\ldots,R_{H,A,N}^{\mathbb{H}})^T$ respectively.

Then we have
\begin{equation}
D\overline{\Gamma}^{\mathbb{E}}|_{\overline{u}^{\mathbb{E}}}=-\frac{1}{2\pi} \operatorname{diag}\{r_1^{\alpha},\ldots,r_N^{\alpha}\}^{-1}\cdot \text{Hess}_{u^{\mathbb{E}}}F_A^{\mathbb{E}}
\end{equation}
in $\mathbb{E}^2$ and
\begin{equation}
D\overline{\Gamma}^{\mathbb{H}}|_{\overline{u}^{\mathbb{H}}}=-\frac{1}{4\pi}\operatorname{diag}\{\sinh^{\alpha}\frac{r_1}{2},\ldots,\sinh^{\alpha}\frac{r_N}{2}\}^{-1}\cdot\text{Hess}_{u^{\mathbb{H}}}F_A^{\mathbb{H}}
\end{equation} in $\mathbb{H}^2$.

Since $D\overline{\Gamma}^{\mathbb{E}}|_{\overline{u}^{\mathbb{E}}}\sim -\frac{1}{2\pi}\operatorname{diag}\{r_1^{\alpha},\ldots,r_N^{\alpha}\}^{-\frac{1}{2}}\cdot \text{Hess}_{u^{\mathbb{E}}}F_A^{\mathbb{E}}\cdot \operatorname{diag}\{r_1^{\alpha},\ldots,r_N^{\alpha}\}^{-\frac{1}{2}}$ in $\mathbb{E}^2$ and $D\overline{\Gamma}^{\mathbb{H}}|_{\overline{u}^{\mathbb{H}}}\sim -\frac{1}{4\pi}\operatorname{diag}\{\sinh^{\alpha}\frac{r_1}{2},\ldots,\sinh^{\alpha}\frac{r_N}{2}\}^{-\frac{1}{2}}\cdot \text{Hess}_{u^{\mathbb{H}}}F_A^{\mathbb{H}}\cdot \operatorname{diag}\{\sinh^{\alpha}\frac{r_1}{2},\ldots,\sinh^{\alpha}\frac{r_N}{2}\}^{-\frac{1}{2}}$ in $\mathbb{H}^2$, so they both have $N$ negative eigenvalues.
Therefore, by ODE theory, for any initial metrics $u^{\mathbb{E}}(0)$ and $u^{\mathbb{H}}(0)$, the solutions $u^{\mathbb{E}}(t)$ and $u^{\mathbb{H}}(t)$ converge exponentially fast to $\overline{u}^{\mathbb{E}}$ and $\overline{u}^{\mathbb{H}}$, respectively.
\end{proof}

\begin{theorem}
Given a branched weighted triangulated closed surface \((M, \mathcal{T}, \Phi, H)\), a non-negative real number \(\alpha\), and a prescribed function \(\overline{R} \in C(V)\) such that \(\overline{R}_i \leq 0\) for all vertices \(i \in V\) and $\{\alpha\overline{R}_i\}_{1\leq i\leq N}$ are not identically zero, we establish that \(\overline{R}\) is:
\begin{enumerate}
    \item admissible and A-admissible in \(\mathbb{E}^2\),
    \item tanh-admissible and A-admissible in \(\mathbb{H}^2\),
\end{enumerate}
each via a unique metric determined by the respective geometric structure.
\end{theorem}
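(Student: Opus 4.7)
The plan is to prove existence of a metric realizing $\overline{R}$ as each of the four curvature types -- admissibility and $A$-admissibility in $\mathbb{E}^2$, and $\tanh$-admissibility and $A$-admissibility in $\mathbb{H}^2$ -- via a variational argument, and then invoke Theorems \ref{thm pre} and \ref{thm area} to conclude uniqueness of the realizing metric. For each case I would work with the corresponding modified potential already constructed in those two theorems, namely $\overline{F}^{\mathbb{E}}$, $\overline{F}_{\tanh}^{\mathbb{H}}$, $F_A^{\mathbb{E}}$, or $F_A^{\mathbb{H}}$; a critical point of each is, by direct inspection of the gradient, precisely a metric realizing $\overline{R}$ as the appropriate curvature type.

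Strict convexity of each modified potential is already available from the Hessian calculations in the proofs of Theorems \ref{thm pre} and \ref{thm area}: each Hessian is the sum of the positive semi-definite Jacobian $J^{\mathbb{E}}$ or $J^{\mathbb{H}}$ from Lemmas \ref{Lemma 4} and \ref{Lemma 4.5} and a nonnegative diagonal correction coming from $-\alpha\overline{R}_i \geq 0$ (not identically zero by hypothesis), and so is positive definite throughout the domain. The central task is therefore coercivity, i.e.\ showing that each potential tends to $+\infty$ on the boundary of its domain. Mimicking the decomposition used in Theorems \ref{thm critical point E} and \ref{thm critical point H}, I would write each potential as a telescoping sum of one-dimensional integrals in the coordinates $u_i$ and analyze each boundary direction separately.

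When a component $r_k \to +\infty$ (i.e.\ $u^{\mathbb{E}}_k \to +\infty$ in $\mathbb{E}^2$ or $u^{\mathbb{H}}_k \to 0^-$ in $\mathbb{H}^2$), Lemma \ref{Lemma eps} and its Euclidean analogue force the inner angle at $v_k$ to vanish, giving $K_k \geq \gamma_k > 0$; together with $-\overline{R}_k r_k^\alpha \geq 0$ or $-\overline{R}_k A_k \geq 0$, the integrand is uniformly positive and the corresponding integral diverges. When $r_l \to 0$ (i.e.\ $u_l \to -\infty$), the branch-structure condition of Definition \ref{def branch} yields $\theta_l - (2\beta_l + 2)\pi \geq \gamma_l > 0$, so $K_l + 2\pi\beta_l \leq -\gamma_l$, while the correction term $-\overline{R}_l r_l^\alpha$ or $-\overline{R}_l A_l$ vanishes; integrating the resulting strictly negative integrand backward to $-\infty$ again produces $+\infty$. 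For the hyperbolic potentials, the boundary $u^{\mathbb{H}}_l \to 0^-$ has finite integration range, so as in Theorem \ref{thm critical point H} I would argue instead that $\partial F / \partial u^{\mathbb{H}}_l > 0$ there (using $K_l \to 2\pi$, and for the $A$-case the even stronger fact that $-\overline{R}_l \sinh^\alpha(r_l/2) \to +\infty$ whenever $\overline{R}_l < 0$), confining any critical point to a compact interior region.

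The main obstacle I anticipate is the Euclidean scaling direction $\mathbf{1}$: because $K_i$ is scale-invariant in $\mathbb{E}^2$, the potential's behavior along $\mathbf{1}$ is governed by $2\pi(\chi(M) + \sum_i \beta_i)\, t + (D/\alpha)(e^{\alpha t}-1)$ with $D = -\sum_i \overline{R}_i r_i^\alpha > 0$, so coercivity as $t \to -\infty$ requires $\chi(M) + \sum_i \beta_i \leq 0$. This inequality is in fact forced by admissibility itself: summing the defining relation $K_i + 2\pi\beta_i = \overline{R}_i r_i^\alpha$ and applying combinatorial Gauss-Bonnet gives $2\pi(\chi(M) + \sum_i \beta_i) = \sum_i \overline{R}_i r_i^\alpha < 0$, so this compatibility is read into the branch-structure setup. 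Granting it, coercivity on the full domain follows, and strict convexity together with Lemma \ref{injective} yields a unique critical point of each potential, delivering the desired unique admissible, $A$-admissible, and $\tanh$-admissible metrics in all four cases.
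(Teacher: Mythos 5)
Your overall route---take the modified potentials $\overline{F}^{\mathbb{E}}$, $\overline{F}_{\tanh}^{\mathbb{H}}$, $F_A^{\mathbb{E}}$, $F_A^{\mathbb{H}}$, get strict convexity from the Hessian computations of Theorems \ref{thm pre} and \ref{thm area}, prove coercivity by redoing the one-variable decomposition of Theorems \ref{thm critical point E} and \ref{thm critical point H} (degenerating triangles as $r_k\to+\infty$, the branch-structure inequality of Definition \ref{def branch} as $r_l\to 0$, the monotonicity argument near $u^{\mathbb{H}}\to 0^-$), and conclude uniqueness via Lemma \ref{injective}---is exactly the adaptation the paper intends: its own proof is a two-line ``mutatis mutandis'' pointing to those two theorems, plus the observations that $\overline{R}$ is bounded and that $\overline{R}_ir_i^{\alpha}$, $\overline{R}_i\tanh^{\alpha}\frac{r_i}{2}$, $\overline{R}_i\sinh^{\alpha}\frac{r_i}{2}$ all tend to $0^-$ as $r_i\to 0$. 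The problem is your handling of the one place where the adaptation is genuinely nontrivial. You correctly compute that along the Euclidean scaling ray the potential behaves like $2\pi(\chi(M)+\sum_i\beta_i)\,t+(D/\alpha)(e^{\alpha t}-1)$ with $D>0$, so coercivity as $t\to-\infty$ forces $\chi(M)+\sum_i\beta_i<0$ (if the sum is zero, then $\sum_i\partial F/\partial u_i=De^{\alpha t}>0$ everywhere and no critical point exists at all). But to obtain this inequality you sum the defining relation $K_i+2\pi\beta_i=\overline{R}_ir_i^{\alpha}$, i.e.\ you assume a metric realizing $\overline{R}$ exists. Existence is precisely what this theorem asserts; deriving the precondition of your variational argument from its intended conclusion is circular.

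Nor is the inequality ``read into the branch-structure setup,'' as you claim: Definition \ref{def branch} does not imply $\chi(M)+\sum_i\beta_i<0$. Take a torus with the empty branch set and $\Phi\equiv 0$ on a triangulation without loops of combinatorial length $\le 2$; every simple closed path then satisfies $\sum_j[\pi-\Phi(e_j)]\ge 3\pi>2\pi$, so the hypotheses of the theorem hold, yet $\sum_i(K_i+2\pi\beta_i)=2\pi\chi(M)=0$ in $\mathbb{E}^2$ while any admissible $\overline{R}\le 0$ with some $\overline{R}_i<0$ would force $\sum_i\overline{R}_i\,r_i^{\alpha}<0$; the same obstruction appears in $\mathbb{H}^2$, where $\sum_iK_i=2\pi\chi(M)+\operatorname{Area}>2\pi\chi(M)$. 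So the statement, read literally with ``arbitrary $\chi(M)\in\mathbb{Z}$,'' needs a Gauss--Bonnet-type hypothesis such as $\chi(M)+\sum_i\beta_i<0$ that neither you nor the paper supplies---the paper's ``mutatis mutandis'' silently skips exactly the step you stumbled on. You deserve credit for isolating the true crux, which the paper glosses over, but your patch assumes the conclusion, so the gap in your proposal (and, frankly, in the paper's sketch) remains.
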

\begin{proof}
Notice that $\overline{R}$ is a bounded function since it is defined on a finite discrete vertex set $V$. As $r_i\to 0$, we have $\overline{R}_ir_i^{\alpha}\to 0^-$, $\overline{R}_i\tanh^\alpha\frac{r_i}{2}\to 0^-$, and $\overline{R}_i\sinh^\alpha\frac{r_i}{2}\to 0^-$. Combined with the fact that \(\overline{R}_i \leq 0\), we can adapt the proofs of Theorems \ref{thm critical point E} and \ref{thm critical point H} mutatis mutandis (with necessary modifications) and complete the proof, so we omit it here.
\end{proof}
\section{Some Problems}\label{sec 7}
\hspace{14pt}
In this section, we formulate several open problems yet to be solved. In Remark \(\ref{reason not sinh}\), we provided justification for excluding the equation \(\eqref{wrong flow}\) as a branched \(\alpha\)-flow in \(\mathbb{H}^2\), citing the ill-posedness of its potential \(\eqref{wrong potential}\). A question arises: is there an alternative framework, independent of potential-based methods, to tackle the challenges that follow?

The first problem is as follows:
\begin{problem}
Does the branched \(\alpha\)-flow \(\eqref{wrong flow}\) converge exponentially in \(\mathbb{H}^2\)?
\end{problem}

Secondly, the unsolved problem is on the maximum principle of \eqref{main flow E} and \eqref{main flow H}. In the following discussions, we assume $\chi(M)\not=0$.
\begin{lemma}[\cite{1980The}\label{triangle}, Lemma 13.7.3]
 In both two background geometries \(\mathbb{E}^2\) and \(\mathbb{H}^2\), we have
 \begin{enumerate}
     \item \(\partial\theta_i/\partial r_i < 0\),
     \item \(\partial\theta_i/\partial r_j> 0\) for \(i\neq j\),
     \item \(\partial(\theta_i + \theta_j+\theta_k)/\partial r_i < 0\) in \(\mathbb{H}^2\), and \(\partial(\theta_i + \theta_j+\theta_k)/\partial r_i = 0\) in \(\mathbb{E}^2\).
 \end{enumerate}
Furthermore, the partial derivatives \(\partial\theta_n/\partial r_m\) are elementary functions in \(r_i\), \(r_j\) and \(r_k\) where \(n, m\in\{i, j, k\}\).
\end{lemma}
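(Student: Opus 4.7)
The plan is to derive each partial $\partial\theta_n/\partial r_m$ explicitly by chain rule through the edge-length functions, and then analyze the signs using the structure of circle-packing triangles with overlap weights in $[0,\pi/2]$. Differentiating $l_{ij}=\sqrt{r_i^2+r_j^2+2r_ir_j\cos\Phi_{ij}}$ gives $\partial l_{ij}/\partial r_i=(r_i+r_j\cos\Phi_{ij})/l_{ij}$, and the hyperbolic edge-length formula yields the analogous elementary expression $(\sinh r_i\cosh r_j+\cosh r_i\sinh r_j\cos\Phi_{ij})/\sinh l_{ij}$; both are strictly positive (since $\cos\Phi_{ij}\ge 0$), while $\partial l_{jk}/\partial r_i=0$. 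Composed with the (Euclidean or hyperbolic) law of cosines, which expresses each $\theta_n$ as an elementary function of $(l_{ij},l_{jk},l_{ik})$, the chain rule then shows every $\partial\theta_n/\partial r_m$ is an elementary function of $(r_i,r_j,r_k)$, handling the last assertion of the lemma.

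For the sign claims, parts $(1)$ and $(2)$ are the real content. Differentiating $\cos\theta_i=(l_{ij}^2+l_{ik}^2-l_{jk}^2)/(2l_{ij}l_{ik})$ and its hyperbolic analogue, and substituting the edge-length derivatives, one gets an explicit expression for $\partial\theta_i/\partial r_j$. After simplification, this admits the geometric interpretation used by Thurston: for $\Phi\in[0,\pi/2]$ the three circles realising the triangle possess a common interior power point (the radical center), and $\partial\theta_i/\partial r_j$ factors as a manifestly positive quantity times the signed perpendicular distance from this point to the edge $e_{ij}$, which is non-negative precisely under the overlap hypothesis. Strict positivity then yields $(2)$. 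Part $(1)$ follows by combining $(2)$ with the angle-sum relation: in $\mathbb{E}^2$ the sum $\theta_i+\theta_j+\theta_k=\pi$ is constant, so $\partial\theta_i/\partial r_i=-(\partial\theta_j/\partial r_i+\partial\theta_k/\partial r_i)<0$ by $(2)$; in $\mathbb{H}^2$ the angle sum is even smaller, so the same strict inequality persists a fortiori once $(3)$ is in hand.

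Part $(3)$ is a direct corollary. The Euclidean case is immediate from constancy of the angle sum. In the hyperbolic case, Gauss--Bonnet gives $\pi-(\theta_i+\theta_j+\theta_k)=\mathrm{Area}(\Delta_{ijk})$; since increasing $r_i$ strictly enlarges both $l_{ij}$ and $l_{ik}$ (by the computation above) while leaving $l_{jk}$ fixed, the hyperbolic triangle strictly grows in area, whence $\partial(\theta_i+\theta_j+\theta_k)/\partial r_i<0$.

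The main obstacle is $(2)$. Naive monotonicity of the law of cosines is indecisive: a direct computation gives $\partial\theta_i/\partial l_{ij}=-l_{jk}\cos\theta_j/(l_{ik}l_{ij}\sin\theta_i)$, which has the sign of $-\cos\theta_j$ and thus can be positive when $\theta_j>\pi/2$. One therefore needs the extra cancellation supplied by the radical-center identification, which genuinely uses the full assumption $\Phi\in[0,\pi/2]$; transcribing this to $\mathbb{H}^2$ requires the parallel hyperbolic-trigonometric manipulation and a careful check that the signed perpendicular distances remain non-negative in the hyperbolic setting.
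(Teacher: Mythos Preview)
The paper does not supply a proof of this lemma; it is quoted verbatim from Thurston's notes \cite{1980The} and used as a black box. Your proposal therefore cannot be compared against a proof in the paper, but your outline for parts~(1), (2) and the Euclidean half of~(3) is essentially Thurston's own argument (chain rule through the edge lengths, the radical-center interpretation for the sign of $\partial\theta_i/\partial r_j$, and constancy of the Euclidean angle sum).

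There is, however, a genuine gap in your treatment of the hyperbolic half of~(3). You assert that ``increasing $r_i$ strictly enlarges both $l_{ij}$ and $l_{ik}$ while leaving $l_{jk}$ fixed, so the hyperbolic triangle strictly grows in area.'' But enlarging two sides of a triangle while fixing the third does \emph{not} in general increase the area: already in $\mathbb{E}^2$, starting from the equilateral triangle with $l_{jk}=l_{ij}=l_{ik}=1$ and moving to $l_{ij}=1.9$, $l_{ik}=1.01$ drops the height from $\sqrt{3}/2\approx 0.866$ to about $0.623$, so the area decreases; the same phenomenon occurs in $\mathbb{H}^2$. What might rescue the claim is that in the circle-packing setting the rates $\partial l_{ij}/\partial r_i$ and $\partial l_{ik}/\partial r_i$ are not arbitrary but are tied together through $r_i$, yet you give no argument that this particular coupled deformation increases area. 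This is precisely the nontrivial content of~(3) in $\mathbb{H}^2$, and it is usually established by the explicit computation you set up for~(2): one writes out $\partial\theta_i/\partial r_i+\partial\theta_j/\partial r_i+\partial\theta_k/\partial r_i$ via the hyperbolic law of cosines and the edge-length derivatives and checks the sign directly (this is how Thurston and Chow--Luo proceed). Your Gauss--Bonnet shortcut, as written, is circular: it reduces~(3) to an area-monotonicity statement that is equivalent in difficulty to~(3) itself.
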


\begin{lemma}[\cite{2002Combinatorial}, A3]\label{ij}
In $\mathbb{E}^2$ and $\mathbb{H}^2$, we have
\[
\frac{\partial \theta_i}{\partial u_j^{\mathbb{E}}}=\frac{\partial \theta_j}{\partial u_i^{\mathbb{E}}},
\frac{\partial \theta_i}{\partial u_j^{\mathbb{H}}}=\frac{\partial \theta_j}{\partial u_i^{\mathbb{H}}}.
\]
\end{lemma}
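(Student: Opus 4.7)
The plan is to reduce the identity to a statement about a single triangular face and then verify it by direct differentiation. Since $\theta_i = \sum_{f_{ijk} \in F} \theta_i^{ijk}$, where the sum is over all faces incident to $v_i$, differentiation in $u_j$ only captures contributions from the (at most two) triangles containing both $v_i$ and $v_j$. Moreover, each inner angle $\theta_i^{ijk}$ is determined entirely by the three radii $r_i, r_j, r_k$ and the fixed weights $\Phi_{ij}, \Phi_{jk}, \Phi_{ik}$ of that triangle. It therefore suffices to prove, for a single Euclidean (or hyperbolic) triangle with circle-packing data,
\[
\frac{\partial \theta_i^{ijk}}{\partial u_j} = \frac{\partial \theta_j^{ijk}}{\partial u_i},
\]
after which summation over the shared triangles yields the global identity.

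For the Euclidean case, I would start from the law of cosines
\[
\cos\theta_i^{ijk} = \frac{l_{ij}^2 + l_{ik}^2 - l_{jk}^2}{2\, l_{ij}\, l_{ik}},
\]
together with the packing relation $l_{ab}^2 = r_a^2 + r_b^2 + 2 r_a r_b \cos\Phi_{ab}$. Since $u_a^{\mathbb{E}} = \ln r_a$, the partial $\partial/\partial u_a^{\mathbb{E}}$ equals $r_a\,\partial/\partial r_a$, and by Lemma \ref{triangle}(4) the resulting expressions are explicit elementary functions of $r_i, r_j, r_k, \cos\Phi_{ab}$. The derivative $\partial\theta_i^{ijk}/\partial u_j^{\mathbb{E}}$ receives contributions from the two edges $l_{ij}$ and $l_{jk}$ (the only ones affected by $r_j$), while $\partial\theta_j^{ijk}/\partial u_i^{\mathbb{E}}$ receives analogous contributions from $l_{ij}$ and $l_{ik}$. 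Using $-\sin\theta_i \cdot d\theta_i = d\cos\theta_i$ and the area formula $2A = l_{ij} l_{ik}\sin\theta_i = l_{ij}l_{jk}\sin\theta_j = l_{ik} l_{jk}\sin\theta_k$, both quantities can be rewritten in a common form $P_{ij}/(2A\cdot\text{elementary})$, with $P_{ij}$ manifestly symmetric in $i$ and $j$. The hyperbolic case is handled identically, starting from
\[
\cos\theta_i^{ijk} = \frac{\cosh l_{ij}\cosh l_{ik} - \cosh l_{jk}}{\sinh l_{ij}\sinh l_{ik}}, \qquad \cosh l_{ab} = \cosh r_a \cosh r_b + \sinh r_a \sinh r_b \cos\Phi_{ab},
\]
with $u_a^{\mathbb{H}} = \ln\tanh(r_a/2)$, so that $\partial/\partial u_a^{\mathbb{H}} = \sinh r_a\,\partial/\partial r_a$; the bookkeeping is longer but of the same character.

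The main obstacle is the manipulative burden: the symmetry is not visible term by term and emerges only after one invokes the law of cosines on the remaining edges (or equivalently the Schl\"afli identity $l_{jk}\, d\theta_i + l_{ik}\, d\theta_j + l_{ij}\, d\theta_k = 0$ in $\mathbb{E}^2$, with its hyperbolic analogue $\sum l_{ab}\, d\theta_c = -\,dA$) to cancel the asymmetric pieces. A conceptually cleaner alternative, which I would use as a cross-check, is to exhibit on a single admissible triangle a smooth function $W_{ijk}(u_i, u_j, u_k)$ with $\partial W_{ijk}/\partial u_m = \theta_m^{ijk}$ for $m\in\{i,j,k\}$; the equality of mixed partials of such a $W_{ijk}$ gives the symmetry at once, and the construction of $W_{ijk}$ can be carried out by integrating the already-verified closed $1$-form $\sum_m \theta_m^{ijk}\, du_m$. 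Either route reduces the lemma to the per-triangle identity, and summation over the (at most two) triangles sharing the edge $e_{ij}$ completes the proof in both background geometries.
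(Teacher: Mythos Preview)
The paper does not prove this lemma at all: it is stated as a citation from Chow--Luo \cite{2002Combinatorial}, Appendix A3, and is used without argument. So there is no ``paper's own proof'' to compare against; your direct-computation outline via the law of cosines is in fact the route taken in the cited reference, and your reduction to a single triangle followed by explicit differentiation is correct in spirit and would go through.

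One point worth flagging: your proposed ``cleaner alternative'' of exhibiting a potential $W_{ijk}$ with $\partial W_{ijk}/\partial u_m = \theta_m^{ijk}$ is circular as stated. You say $W_{ijk}$ can be built by integrating the ``already-verified closed $1$-form $\sum_m \theta_m^{ijk}\,du_m$,'' but closedness of that $1$-form is exactly the symmetry $\partial\theta_i/\partial u_j = \partial\theta_j/\partial u_i$ you are trying to prove. The potential-function viewpoint is genuinely illuminating (it is how de~Verdi\`ere and Chow--Luo organize the theory), but it is a \emph{consequence} of the symmetry, not an independent proof of it; the actual work still has to be the direct computation you describe first. Also, the Euclidean ``Schl\"afli identity'' you quote, $l_{jk}\,d\theta_i + l_{ik}\,d\theta_j + l_{ij}\,d\theta_k = 0$, is not the correct differential relation for a Euclidean triangle (the elementary one is simply $d\theta_i + d\theta_j + d\theta_k = 0$); this side remark is not load-bearing, but you should drop or correct it.
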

\begin{proposition}\label{elementary}
Under branched $\alpha$-flows \eqref{main flow E} and \eqref{main flow H}, $K_i$ evolves as
\begin{equation}\label{elementary E}
\frac{\mathrm{d} K_i}{\mathrm{d}t}=\sum_{v_j\sim v_i}C^{\mathbb{E}}_{ij}[(K_j+2\pi\beta_j - K_i-2\pi\beta_i)-s_{\alpha}^{\mathbb{E}}(r_j^{\alpha}-r_i^\alpha)]
\end{equation}
in $\mathbb{E}^2$, and
\begin{equation}\label{elementary H}
\frac{\mathrm{d}K_i}{\mathrm{d}t}=\sum_{v_j\sim v_i}C^{\mathbb{H}}_{ij}[(K_j+2\pi\beta_j - K_i-2\pi\beta_i)-s_{\alpha}^{\mathbb{H}}(\tanh^{\alpha}\frac{r_j}{2}-\tanh^{\alpha}\frac{r_i}{2})]-S_i(K_i+2\pi\beta_i-s_{\alpha}^{\mathbb{H}}\tanh^{\alpha}\frac{r_i}{2})
\end{equation}
in $\mathbb{H}^2$, respectively. Here, the summation $\sum_{v_j\sim v_i}$ runs over all vertices $v_j$ that are adjacent to vertex $v_i$. The functions $C_{ij}^{\mathbb{E}}$, $C_{ij}^{\mathbb{H}}$ and $S_i$ are positive elementary functions of the radii $r_1,\ldots,r_N$. Additionally, $C_{ij}^{\mathbb{E}}$ and $C_{ij}^{\mathbb{H}}$ are both symmetric.
\end{proposition}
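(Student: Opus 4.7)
The plan is to compute $\mathrm{d}K_i/\mathrm{d}t$ by the chain rule and then reorganize the resulting sum using the special structure of the Jacobians $J^{\mathbb{E}}$ and $J^{\mathbb{H}}$ described in Lemmas \ref{Lemma 4} and \ref{Lemma 4.5}. Specifically, writing
\[
\frac{\mathrm{d}K_i}{\mathrm{d}t}=\sum_{j=1}^{N}\frac{\partial K_i}{\partial u_j}\,\frac{\mathrm{d}u_j}{\mathrm{d}t},
\]
and substituting the flow equation \eqref{main flow E} (resp.\ \eqref{main flow H}), I would factor everything through the entries $J^{\mathbb{E}}_{ij}$ (resp.\ $J^{\mathbb{H}}_{ij}$). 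By Lemma \ref{Lemma 4}, $J^{\mathbb{E}}_{ij}=0$ whenever $v_j\neq v_i$ and $v_j\not\sim v_i$, so only neighbor terms survive.

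For the Euclidean case, set $x_j := s_{\alpha}^{\mathbb{E}}r_j^{\alpha}-(K_j+2\pi\beta_j)$. Because $(1,\ldots,1)^T$ lies in $\ker J^{\mathbb{E}}$, the row sums $\sum_j J^{\mathbb{E}}_{ij}$ vanish, which lets me rewrite
\[
\sum_{j}J^{\mathbb{E}}_{ij}x_j=\sum_{j\sim i}J^{\mathbb{E}}_{ij}(x_j-x_i)=\sum_{j\sim i}(-J^{\mathbb{E}}_{ij})\bigl[(K_j+2\pi\beta_j-K_i-2\pi\beta_i)-s_{\alpha}^{\mathbb{E}}(r_j^{\alpha}-r_i^{\alpha})\bigr].
\]
Then defining $C^{\mathbb{E}}_{ij}:=-J^{\mathbb{E}}_{ij}$ gives \eqref{elementary E}, and the sign and symmetry of $C^{\mathbb{E}}_{ij}$ follow directly from the sign pattern and symmetry of $J^{\mathbb{E}}$ in Lemma \ref{Lemma 4} (augmented by Lemma \ref{ij}).

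For the hyperbolic case, the decomposition $J^{\mathbb{H}}=J_A^{\mathbb{H}}+J_B^{\mathbb{H}}$ of Lemma \ref{Lemma 4.5} is essential. With $y_j := s_{\alpha}^{\mathbb{H}}\tanh^{\alpha}(r_j/2)-(K_j+2\pi\beta_j)$, the diagonal positive-definite part contributes $(J_A^{\mathbb{H}})_{ii}y_i$, while $J_B^{\mathbb{H}}$ (whose kernel is again $(1,\ldots,1)^T$) admits the same telescoping trick as above via its off-diagonal weights $-B_{ij}$. Setting $C^{\mathbb{H}}_{ij}:=B_{ij}$ for $j\sim i$ and $S_i:=(J_A^{\mathbb{H}})_{ii}$ yields exactly \eqref{elementary H}, and positivity plus the symmetry $C^{\mathbb{H}}_{ij}=C^{\mathbb{H}}_{ji}$ come from Lemma \ref{Lemma 4.5}. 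The elementary nature of $C^{\mathbb{E}}_{ij}$, $C^{\mathbb{H}}_{ij}$, $S_i$ in the radii is inherited from the last sentence of Lemma \ref{triangle}, since each $\partial\theta_n/\partial r_m$ is elementary, and the coordinate change $u_i=\ln r_i$ (resp.\ $u_i=\ln\tanh(r_i/2)$) is elementary as well.

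I do not expect any serious obstacle: the calculation is purely a chain-rule manipulation plus a row-sum cancellation. The only nontrivial point is organizing the hyperbolic expression so that the $J_A^{\mathbb{H}}$ contribution cleanly produces the extra $-S_i(K_i+2\pi\beta_i-s_{\alpha}^{\mathbb{H}}\tanh^{\alpha}(r_i/2))$ term without double-counting; keeping track of signs in the identity $y_i-y_j=(K_j+2\pi\beta_j-K_i-2\pi\beta_i)-s_{\alpha}^{\mathbb{H}}(\tanh^{\alpha}(r_j/2)-\tanh^{\alpha}(r_i/2))$ is the only bookkeeping care needed.
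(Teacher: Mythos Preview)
Your proposal is correct and follows essentially the same idea as the paper: chain rule plus the row-sum cancellation coming from $(1,\dots,1)^T\in\ker J^{\mathbb{E}}$ (resp.\ $\ker J_B^{\mathbb{H}}$), together with the sign and symmetry information on the Jacobian entries. The only organizational difference is that the paper carries out the computation triangle by triangle (computing $d\theta_i^{jk}/dt$ via Lemmas~\ref{triangle} and~\ref{ij}, then summing over faces containing $v_i$), whereas you work directly with the aggregated Jacobians $J^{\mathbb{E}}$, $J^{\mathbb{H}}=J_A^{\mathbb{H}}+J_B^{\mathbb{H}}$ from Lemmas~\ref{Lemma 4} and~\ref{Lemma 4.5}; the two routes are equivalent and your identifications $C^{\mathbb{E}}_{ij}=-J^{\mathbb{E}}_{ij}$, $C^{\mathbb{H}}_{ij}=B_{ij}$, $S_i=(J_A^{\mathbb{H}})_{ii}$ match what the paper obtains after summing over triangles.
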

\begin{proof}
For convenience, we first define functions $c(u)$, $c(s_{\alpha})$ and $c(r)$ in different geometries:
if the background geometry is $\mathbb{E}^2$, then $c(u)$ is $u^{\mathbb{E}}$, $c(s_{\alpha})$ is $s_{\alpha}^{\mathbb{E}}$, and $c(r)$ is $r^{\alpha}$; if the background geometry is $\mathbb{H}^2$, then $c(u)$ is $u^{\mathbb{H}}$, and $c(s_{\alpha})$ is $s_{\alpha}^{\mathbb{H}}$, $c(r)$ is $\tanh^{\alpha}\frac{r_i}{2}$.

For the triangle $\Delta_{ijk}$, by the chain rule, we have
\begin{equation}
\begin{aligned}
d\theta_i/dt&=\partial\theta_i/\partial c(u_i)c(u_i)'+\partial\theta_i/\partial c(u_j)c(u_j)'+\partial\theta_i/\partial c(u_k)c(u_k)'\\
&=\partial\theta_i/\partial c(u_j)[c(s_{\alpha})(c(r_j)-c(r_i))-(K_j+2\pi\beta_j-K_i-2\pi\beta_i)]\\&+\partial\theta_i/\partial c(u_k)[c(s_{\alpha})(c(r_k)-c(r_i))-(K_k+2\pi\beta_k-K_i-2\pi\beta_i)]\\&+D_i(c(s_{\alpha})c(r_i)-K_i-2\pi\beta_i).
\end{aligned}
\end{equation}
Here $D_i=\partial\theta_i/\partial c(u_i)+\partial\theta_i/\partial c(u_j)+\partial\theta_i/\partial c(u_k)$. By Lemma \ref{ij}, $D_i$ can be written as $\partial\theta_i/\partial c(u_i)+\partial\theta_j/\partial c(u_i)+\partial\theta_k/\partial c(u_i)$. Set $A_{ii}=-D_i$ in $\mathbb{H}^2$ and $A_{ii}=1$ in $\mathbb{E}^2$, then by Lemma \ref{triangle}, we get that $A_{ii}>0$. Let $A_{mn}=\partial\theta_m/\partial c(u_n)$ for $n\not=m$. Then by Lemma \ref{triangle}, it is positive and an elementary function. Moreover, we get $A_{mn}=A_{nm},m\not =n$ by Lemma \ref{ij}. Since $dK_i/dt = -\sum_{j,k} d\theta_i^{jk}/dt$, Thus the result follows.
\end{proof}

Calculate
\begin{equation}\label{elementary2 E}
\begin{aligned}
\frac{\mathrm{d} (K_i+2\pi\beta_i)-s_{\alpha}^{\mathbb{E}}r_i^{\alpha}}{\mathrm{d}t}&=\sum_{v_j\sim v_i}C^{\mathbb{E}}_{ij}[(K_j+2\pi\beta_j - K_i-2\pi\beta_i)-s_{\alpha}^{\mathbb{E}}(r_j^{\alpha}-r_i^\alpha)]\\ &+\frac{\alpha (s_{\alpha}^{\mathbb{E}})^2r_i^{\alpha}}{2\pi\chi(M)}\sum_{1\leq l\leq N}[(K_i+2\pi\beta_i - K_l-2\pi\beta_l)-s_{\alpha}^{\mathbb{E}}(r_i^{\alpha}-r_l^\alpha)]r_l^{\alpha}
\end{aligned}
\end{equation} and
\begin{equation}\label{elementary2 H}
\begin{aligned}
\frac{\mathrm{d} (K_i+2\pi\beta_i)-s_{\alpha}^{\mathbb{H}}\tanh^{\alpha}\frac{r_i}{2}}{\mathrm{d}t}&=\sum_{v_j\sim v_i}C^{\mathbb{H}}_{ij}[(K_j+2\pi\beta_j - K_i-2\pi\beta_i)-s_{\alpha}^{\mathbb{H}}(\tanh^{\alpha}\frac{r_j}{2}-\tanh^{\alpha}\frac{r_i}{2})]\\ &-S_i(K_i+2\pi\beta_i-s_{\alpha}^{\mathbb{H}}\tanh^{\alpha}\frac{r_i}{2})+\frac{\alpha (s_{\alpha}^{\mathbb{H}})^2\tanh^{\alpha}\frac{r_i}{2}}{2\pi\chi(M)}\\&\sum_{1\leq l\leq N}[(K_i+2\pi\beta_i - K_l-2\pi\beta_l)-s_{\alpha}^{\mathbb{H}}(\tanh^{\alpha}\frac{r_i}{2}-\tanh^{\alpha}\frac{r_l}{2})]\tanh^{\alpha}\frac{r_l}{2}.
\end{aligned}
\end{equation}

Let $u^{\mathbb{E}}(t) = (u^{\mathbb{E}}_1(t),\ldots,u^{\mathbb{E}}_N(t))$ be a solution to
\eqref{main flow E} and $u^{\mathbb{H}}(t) = (u^{\mathbb{H}}_1(t),\ldots,u^{\mathbb{H}}_N(t))$ be a solution to
\eqref{main flow H}.
Set
\begin{equation}\label{max E}
G_v^{\mathbb{E}}(t)=\max_{1\leq i\leq N}\left\{K_1(t)+2\beta_1\pi-s_{\alpha}^{\mathbb{E}}r_1^{\alpha},K_2(t)+2\beta_2\pi-s_{\alpha}^{\mathbb{E}}r_2^{\alpha},\ldots,K_N(t)+2\beta_N\pi-s_{\alpha}^{\mathbb{E}}r_N^{\alpha}\right\},
\end{equation}
\begin{equation}\label{max H}
\begin{aligned}
G_p^{\mathbb{H}}(t)&=\max\bigg(\max_{1\leq i\leq N}\bigg\{K_1(t)+2\beta_1\pi-s_{\alpha}^{\mathbb{H}}\tanh^{\alpha}\frac{r_1}{2},K_2(t)+2\beta_2\pi-s_{\alpha}^{\mathbb{H}}\tanh^{\alpha}\frac{r_2}{2},\ldots,K_N(t)\\& +2\beta_N\pi-s_{\alpha}^{\mathbb{H}}\tanh^{\alpha}\frac{r_N}{2}\bigg\},0\bigg),
\end{aligned}
\end{equation}
\begin{equation}\label{min E}
G_w^{\mathbb{E}}(t)=\min_{1\leq i\leq N}\left\{K_1(t)+2\beta_1\pi-s_{\alpha}^{\mathbb{E}}r_1^{\alpha},K_2(t)+2\beta_2\pi-s_{\alpha}^{\mathbb{E}}r_2^{\alpha},\ldots,K_N(t)+2\beta_N\pi-s_{\alpha}^{\mathbb{E}}r_N^{\alpha}\right\},
\end{equation}
\begin{equation}\label{min H}
\begin{aligned}
G_q^{\mathbb{H}}(t)&=\min\bigg(\min_{1\leq i\leq N}\bigg\{K_1(t)+2\beta_1\pi-s_{\alpha}^{\mathbb{H}}\tanh^{\alpha}\frac{r_1}{2},K_2(t)+2\beta_2\pi-s_{\alpha}^{\mathbb{H}}\tanh^{\alpha}\frac{r_2}{2},\ldots,K_N(t)\\&+2\beta_N\pi-s_{\alpha}^{\mathbb{H}}\tanh^{\alpha}\frac{r_N}{2}\bigg\},0\bigg).
\end{aligned}
\end{equation}

Substitute the functions in \eqref{max E} and \eqref{min E} into \eqref{elementary2 E}, and those in \eqref{max H} and \eqref{min H} into \eqref{elementary2 H}, respectively. However, a problem arises. For convenience, we take $G_v^{\mathbb{E}}(t)$ as an example. From the right-hand side of \eqref{max E}, we observe that the first term is non-positive, while the second term is non-negative. Hence the maximum principle cannot be directly applied. The second question is:
\begin{problem}
For \(\alpha\)-flows \eqref{main flow E} and \eqref{main flow H}, what conditions and restrictions are required to guarantee the corresponding maximum principles?
\end{problem}

\noindent{\bf Acknowledgments.}
The fourth author is supported by NSFC (No. 12171480), Natural Science Foundation of Hunan Province (No. 2022JJ10059) and Scientific Research Program of NUDT (No. JS2023-01). All authors would like to thank professor Huabin Ge for many useful conversations.

\noindent Wenjun Li,  liwenjun22@nudt.edu.cn\\
\emph{Department of Mathematics, National University of Defense Technology, Changsha 410073, P. R. China.}\\

\noindent Rongyuan Liu, 3102389857@qq.com\\
\emph{Department of Mathematics, National University of Defense Technology, Changsha 410073, P. R. China.}\\

\noindent Guohao Chen, m305893667@qq.com\\
\emph{Department of Mathematics, National University of Defense Technology, Changsha 410073, P. R. China.}\\

\noindent Aijin Lin, linaijin@nudt.edu.cn\\
\emph{Department of Mathematics, National University of Defense Technology, Changsha 410073, P. R. China.}\\
\end{document}